\newtheorem{The}{Theorem}[section]
\newtheorem{Lemme}[The]{Lemma}
\newtheorem{Prop}[The]{Proposition}
\newtheorem{Cor}[The]{Corollary}
\theoremstyle{definition}
\theoremstyle{remark}
\newtheorem{Rk}[The]{Remark}
\titleformat{\subsection}[runin]{\normalfont\bfseries}{\thesubsection.}{.5em}{}[.]\titlespacing{\subsection}{0pt}{2ex plus .1ex minus .2ex}{.8em}
\titleformat{\subsubsection}[runin]{\normalfont\bfseries}{\thesubsubsection.}{.5em}{}[.]
\titlespacing{\subsubsection}{0pt}{2ex plus .1ex minus .2ex}{.8em}
\title{\large 
\textbf{
CLUSTER VOLUMES FOR THE GAUSSIAN FREE FIELD ON METRIC GRAPHS}}
\author{}
\date{}
\newcommand{\E}{\mathbb{E}}
\newcommand{\R}{\mathbb{R}}
\newcommand{\Z}{\mathbb{Z}}
\newcommand{\G}{\mathcal{G}}
\newcommand{\K}{\mathcal{K}}
\newcommand{\A}{\mathcal{A}}
\renewcommand{\P}{\mathbb{P}}
\newcommand{\eps}{\varepsilon}
\newcommand{\I}{{\cal I}}
\newcommand{\V}{{\cal V}}
\renewcommand{\phi}{\varphi}
\renewcommand{\tilde}{\widetilde}
\renewcommand{\hat}{\widehat}
\renewcommand{\epsilon}{\varepsilon}
\renewcommand\theequation{\thesection.\arabic{equation}}
\newcommand{\PK}{\P^K}
\newcommand{\PPK}{P^K}
\newcommand{\EK}{\E^K}
\newcommand{\OP}{\overline{\P}}
\renewcommand{\OE}{\overline{\E}}
\newcommand{\OPK}{\overline{\P}^K}
\newcommand{\OEK}{\overline{\E}^K}
\newcommand{\PKK}[1]{\P^{#1}}
\newcommand{\PPKK}[1]{P^{#1}}
\newcommand{\LV}{M}
\definecolor{Red}{rgb}{1,0,0} 
\definecolor{Blue}{rgb}{0,0,1}
\definecolor{Olive}{rgb}{0.41,0.55,0.13}
\definecolor{Yarok}{rgb}{0,0.5,0}
\definecolor{Green}{rgb}{0,1,0}
\definecolor{MGreen}{rgb}{0,0.8,0}
\definecolor{DGreen}{rgb}{0,0.55,0}
\definecolor{Yellow}{rgb}{1,1,0}
\definecolor{Cyan}{rgb}{0,1,1}
\definecolor{Magenta}{rgb}{1,0,1}
\definecolor{Orange}{rgb}{1,.5,0}
\definecolor{Violet}{rgb}{.5,0,.5}
\definecolor{Purple}{rgb}{.75,0,.25}
\definecolor{Brown}{rgb}{.75,.5,.25}
\definecolor{Grey}{rgb}{.7,.7,.7}
\definecolor{Black}{rgb}{0,0,0}
\def\black{\color{Black}}
\begin{document}
\maketitle

\begin{center}
\vspace{-1.3cm}
Alexander Drewitz$^1$, Alexis Pr\'evost$^2$ and Pierre-Fran\c cois Rodriguez$^3$ 
\end{center}

\vspace{-0.1cm}
\begin{abstract}
\begin{minipage}{0.75\textwidth}
 We study the volume of the critical clusters for the percolation of the level sets of the Gaussian free field on metric graphs. On $\Z^d$ below the upper-critical dimension $d=6$, we show that the largest such cluster in a box of side length $r$  has volume of order $r^{\frac{d+2}{2}}$, as conjectured by Werner in \cite{werner2020clusters}. This is in contrast to the mean-field regime $d>6$, where this volume is of order $r^4$. We further obtain precise asymptotic tails for the volume of the critical cluster of the origin, and a lower bound on the tail of the volume of the near-critical cluster of the origin below the upper-critical dimension. Our proof extends to any graph with polynomial volume growth and polynomial decay of the Green's function as long as the critical one-arm probability decays as the square root of the Green's function, which is satisfied in low enough dimension. 
\end{minipage}
\end{abstract}

\vspace{4cm}
\begin{flushleft}

\noindent\rule{5cm}{0.4pt} \hfill \today \\
\bigskip
\begin{multicols}{2}

$^1$Universit\"at zu K\"oln\\
Department Mathematik/Informatik \\
Weyertal 86--90 \\
50931 K\"oln, Germany. \\
\url{adrewitz@uni-koeln.de}\\[2em]

$^2$University of Geneva\\
Section of Mathematics\\
24, rue du G\'enéral Dufour\\
1211 Genève 4, Suisse.
\\\url{alexis.prevost@unige.ch}\\[2em]

\columnbreak
\thispagestyle{empty}
\bigskip
\medskip
\hfill$^3$Imperial College London\\
\hfill Department of Mathematics\\
\hfill London SW7 2AZ \\
\hfill United Kingdom\\
\hfill \url{p.rodriguez@imperial.ac.uk} 
\end{multicols}
\end{flushleft}

\newpage

\section{Introduction} \label{sec:intro}

Percolation models display a rich phenomenology near their critical point, which depends in general on parameters such as the underlying spatial dimension of the model and is notoriously difficult to describe rigorously. In this context, among the quantities of central interest are the so-called \emph{critical exponents}, which are expected to universally characterize the critical and near-critical behavior of these models. In the well-studied case of Bernoulli percolation, significant progress has been made on certain two-dimensional lattices \cite{MR879034,Sm-01,SmWe-01,MR1887622}, as well as within the mean-field regime in high enough dimensions \cite{Bar-Aiz91,HarSla90,MR1431856,KozNac11,10.1214/17-EJP56}, see also \cite{duminilcopin2024alternativeapproachmeanfieldbehaviour} for very recent developments. However, the understanding of intermediate dimensions, which lie between these two regimes,  remains largely an open problem.

In such dimensions, recent advances in the investigation of critical behavior have emerged from studying a bond percolation model linked to the Gaussian free field, which belongs to a different universality class than Bernoulli percolation. This model, introduced in \cite{MR3502602}, is the metric-graph version of the discrete Gaussian free field excursion sets, whose study goes back to \cite{MR914444}, and which was more recently re-initiated in \cite{MR3053773}; in fact the two models are presumably in the same universality class \cite{chalhoub2024universality}. The metric graph version has provided significant insights in the mean-field regime \cite{werner2020clusters,cai2023onearm,ganguly2024ant}, and -- notably -- also in the challenging intermediate dimensions \cite{DiWi,DrePreRod5,DrePreRod3,DrePreRod8,DrePreRod9,cai2024onearm}, which lie below the upper critical dimension. The model also connects with other frameworks like random interlacements or loop soups, and exhibits a continuous transition with scaling near the critical point \cite{DrePreRod5}. These properties are crucial for understanding the (near-)critical cluster volumes, on which we focus in this article. Our results contribute to the understanding of \cite[Conjecture~A and C]{werner2020clusters}.

For simplicity, we focus on $\Z^d$, $d \geq 3$, in this introduction but our results are in fact more general; see \eqref{eq:critvol-toblerone0}--\eqref{eq:critvol-toblerone} for an example, and Section~\ref{sec:mainresults} for our results in their most general form. We denote by $\phi$ the Gaussian free field on $\Z^d$, that is the centered Gaussian field whose covariance function is the Green function $g(\cdot,\cdot)$ on $\Z^d$. The percolation model associated to $\phi$ we will be interested in can be defined as follows: conditionally on $\phi$, open each edge $\{x,y\}$ of $\Z^d$ independently with probability
\begin{equation}
\label{eq:disrete_model}
\begin{split} 
1-\exp\big(-2(\phi_x-a)_+(\phi_y-a)_+\big).
\end{split}
\end{equation}
Here $a\in{\R}$ is the percolation parameter, and the above procedure defines a percolation model at level $a$, which is naturally increasing in $a$. Alternatively, this percolation model can be described via the metric graph associated to $\Z^d$, and we refer to Section~\ref{sec:mainresults} for details. We denote by $\K^a$ the set of vertices in $\Z^d$ which are connected to the origin by a path of open edges at level $a$. It is known \cite{MR3502602,DrePreRod3} that $a=0$ is the critical level for percolation of $\K^a$. That is, $\P$-a.s., $\K^a$ is a.s.\ finite when $a\geq 0$, and is  infinite with positive probability when $a<0$. Moreover, the cluster capacity is an integrable quantity, and for later reference we note 
that (see \cite[Corollary 1.3]{DrePreRod5}) 
\begin{equation}\label{eq:cap-tail}
\P\big( \text{cap}({\mathcal{K}}^0) > t \big) \sim \frac{1}{\pi \sqrt{gt}} \text{ as } t \to \infty
\end{equation}
where $g=g(0,0)$ and $a\sim b$ means $\frac ab \to 1$ in the given limit. For $r\geq1$ and $a\in{\R}$, we denote by $|\K^a|$ the cardinality of the cluster of the origin $\K^a$. Furthermore, the connected components at level $a$ in a ball of radius $r$ around $0$ can be ordered by cardinality, and we denote by $\LV_r^a$ the cardinality of the largest such cluster. We refer to \eqref{eq:vol} and \eqref{eq:defVra} below for precise definitions. The tail of the \emph{cluster size}, or \emph{volume} functional $|\mathcal{K}^a|$, as well as the typical behavior of the \emph{largest cluster size} $\LV^a_r$, will be the main objects of interest in this article; see Remark~\ref{R:vol-forms} regarding other (natural) choices of volume functionals. Our first result concerns the critical case $a=0$  below the upper critical dimension $d=6$.

\begin{The}\label{T:critvol-Zd}
    For $d\in \{ 3,4,5 \}$, there exist $c=c(d), C=C(d) \in (0,\infty)$ such that for all $n \geq 1$,
    \begin{equation}
        \label{eq:critvol-Zd}
      cn^{-\frac{d-2}{d+2}} \leq  \P(|\mathcal{K}^0| \geq n) \leq  Cn^{-\frac{d-2}{d+2}},
    \end{equation}
    and for all $r,t\geq1$ with $r^{\frac{d+2}{2}}\geq Ct$,
    \begin{equation}
    \label{eq:critlargestvol-Zd2}
    \begin{split} 
        \P\big((1/t)r^{\frac{d+2}{2}}\leq \LV^0_r\leq tr^{\frac{d+2}{2}}\big)
            \geq 1- Ct^{-c}.
    \end{split}
    \end{equation}
\end{The}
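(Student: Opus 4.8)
\medskip

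\noindent\textbf{Proof idea.} Write $D:=\tfrac{d+2}{2}$ for the conjectured cluster dimension and $\alpha:=\tfrac{d-2}{2}$, so $\alpha/D=\tfrac{d-2}{d+2}$, $D-2=\alpha$ and $D+\alpha=d$. Both statements are deduced from the two-sided one-arm bound $\P(0\leftrightarrow\partial B_r)\asymp r^{-\alpha}$ available for $d\in\{3,4,5\}$, together with the two-point bound $\P(0\leftrightarrow x)\asymp g(0,x)\asymp|x|^{-(d-2)}$ (from Lupu's isomorphism), which on summation yields the first-moment estimate $\E[|\K^0\cap B_r|]=\sum_{x\in B_r}\P(0\leftrightarrow x)\asymp r^2$; the sharp constant in \eqref{eq:cap-tail} plays no role at the level of order of magnitude. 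The upper bound in \eqref{eq:critvol-Zd} is then immediate: as $\mathrm{diam}(\K^0)<\rho$ and $0\in\K^0$ force $\K^0\subseteq B_\rho$, one has $\{|\K^0|\geq n\}\subseteq\{0\leftrightarrow\partial B_{\rho/2}\}\cup\{|\K^0\cap B_\rho|\geq n\}$, and bounding the two probabilities by $\lesssim\rho^{-\alpha}$ and, via Markov, by $\E[|\K^0\cap B_\rho|]/n\lesssim\rho^2/n$, the choice $\rho\asymp n^{1/D}$ gives $\P(|\K^0|\geq n)\lesssim n^{-\alpha/D}=n^{-(d-2)/(d+2)}$.

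For the lower bound in \eqref{eq:critvol-Zd} I would run a \emph{truncated} second moment at scale $\rho\asymp n^{1/D}$. Set $A:=\{0\leftrightarrow\partial B_\rho\}$ and $V:=|\K^0\cap B_{2\rho}|$. One checks $\E[V\,\1_A]\asymp\rho^2$: the bound $\leq$ is $\E[V]\asymp\rho^2$, and for $\geq$ note that for $x\in B_{2\rho}\setminus B_\rho$ the event $\{0\leftrightarrow x\}$ already entails $A$, so $\E[V\,\1_A]\geq\sum_{x\in B_{2\rho}\setminus B_\rho}\P(0\leftrightarrow x)\asymp\rho^d\rho^{-(d-2)}=\rho^2$. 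The key further ingredient is an \emph{upper tail for the localized volume at its natural scale}, namely that for some $b>1$
\begin{equation}\label{eq:plan-uppertail}
\P\big(|\K^0\cap B_{2\rho}|\geq\lambda\,\rho^{D}\big)\ \leq\ C\,\lambda^{-b}\,\rho^{-\alpha}\qquad\text{for all }\lambda\geq 1 .
\end{equation}
Granting \eqref{eq:plan-uppertail}, $\E\big[(V-\Lambda\rho^{D})_+\,\1_A\big]\leq\rho^{D}\int_{\Lambda}^{\infty}\P(V\geq\lambda\rho^{D})\,\diff\lambda\lesssim\Lambda^{1-b}\rho^{2}$, which is $\leq\tfrac12\E[V\,\1_A]$ once $\Lambda$ is a large enough constant; hence $V':=V\wedge(\Lambda\rho^{D})$ obeys $\E[V'\,\1_A]\asymp\rho^2$ and, using $(V')^2\leq\Lambda\rho^{D}V$, also $\E[(V')^2\,\1_A]\leq\Lambda\rho^{D}\E[V\,\1_A]\lesssim\Lambda\rho^{D+2}$. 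The Cauchy--Schwarz inequality (equivalently Paley--Zygmund for $V'$ under $\P(\,\cdot\mid A)$) then gives $\P(V\geq c\rho^{D}\mid A)\gtrsim\Lambda^{-1}$, and therefore $\P(|\K^0|\geq c\rho^{D})\geq\P(V\geq c\rho^{D})\gtrsim\P(A)\asymp n^{-(d-2)/(d+2)}$.

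Everything hinges on \eqref{eq:plan-uppertail} with a decay strictly faster than $\lambda^{-1}$: the Markov bound only yields exponent $1$, which is not integrable, while the tree-graph bound $\E[|\K^0\cap B_\rho|^{2}]\lesssim\rho^{6}$ is too crude below $d=6$ (being governed by atypically dense clusters, so Chebyshev loses the correct power of $\rho$). I expect to prove \eqref{eq:plan-uppertail}, at least for $\lambda$ polynomially large, by a multiscale argument: if $|\K^0\cap B_{2\rho}|\geq\lambda\rho^{D}$, tile $B_{2\rho}$ by boxes of side $s=\rho\lambda^{-2/(d+2)}$ (so $s^{D}=\rho^{D}/\lambda$), whereupon a pigeonhole count forces $\K^0$ to contain $\gtrsim\lambda^{(3d+2)/(d+2)}\rho^{-\alpha}$ of these boxes in each of which its volume is at least the typical scale-$s$ value $s^{D}$; the probability of this many essentially disjoint ``large-piece'' events is then bounded by quasi-multiplicativity of the one-arm probability and an FKG/BK-type inequality, producing the gain $\lambda^{-b}$. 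Pushing this down to all $\lambda\geq1$ (the pigeonhole count being vacuous for $\lambda$ of constant order, where a separate bootstrap is needed) is the main technical obstacle; note that the capacity tail \eqref{eq:cap-tail} and iso-capacitary inequalities control only the ball-like extreme of \eqref{eq:plan-uppertail} and do not by themselves suffice.

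Finally, \eqref{eq:critlargestvol-Zd2} is bootstrapped from \eqref{eq:critvol-Zd}. For the upper half, letting $\mathcal C_x$ denote the connected component of $x$ at level $0$ inside $B_r$, on $\{\LV^0_r>tr^{D}\}$ each of the $>tr^{D}$ vertices of the largest component satisfies $|\mathcal C_x|>tr^{D}$, so $tr^{D}\,\P(\LV^0_r>tr^{D})\leq\sum_{x\in B_r}\P(|\mathcal C_x|>tr^{D})\leq|B_r|\cdot C(tr^{D})^{-\alpha/D}$ by translation invariance and the volume upper tail; since $(\tfrac{\alpha}{D}+1)D=\alpha+D=d$, this is $Ct^{-2d/(d+2)}$, of the required form $Ct^{-c}$. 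For the lower half one uses the localized lower bound $\P(|\K^0_x\cap B_\rho(x)|\geq c\rho^{D})\gtrsim\rho^{-\alpha}$ produced above: partition $B_r$ into $\asymp(r/\rho)^{d}$ boxes of side $\rho\asymp r\,t^{-2/(d+2)}$, invoke the spatial Markov property of the metric-graph free field to decouple well-separated boxes, and run a second moment on the number of boxes hosting a cluster of size $\geq(1/t)r^{D}$; patching the regime of large $t$ (where many near-independent boxes give the estimate directly) together with that of moderate $t$ (where one needs genuine concentration of the number of large clusters, again leaning on \eqref{eq:plan-uppertail}) yields $\P(\LV^0_r\geq(1/t)r^{D})\geq 1-Ct^{-c}$, with the same bookkeeping producing the polynomial rate $t^{-c}$ on both sides.
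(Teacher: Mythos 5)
Your upper bounds are sound and close in spirit to the paper's (the volume tail upper bound via the one-arm estimate plus the first-moment/Markov step at scale $\rho\asymp n^{2/(d+2)}$ is essentially how \eqref{eq:critvol-ub} is obtained, and your counting argument for the upper half of \eqref{eq:critlargestvol-Zd2} is a fine substitute for Proposition~\ref{pro:upperMRa}). The genuine gap is in the lower bounds, and it is concentrated in your proposed estimate $\P\big(|\K^0\cap B_{2\rho}|\geq\lambda\rho^{D}\big)\leq C\lambda^{-b}\rho^{-\alpha}$ with some $b>1$, on which your entire truncated second-moment argument (and, downstream, the per-box constant-probability bound feeding the lower half of \eqref{eq:critlargestvol-Zd2}) rests. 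This inequality is not known, and the route you sketch for it --- quasi-multiplicativity of the one-arm event together with an ``FKG/BK-type inequality'' --- is not available for this model: level-set percolation of the metric-graph GFF has strong positive long-range correlations, there is no van den Berg--Kesten-type inequality, and quasi-multiplicativity of the one-arm probability has not been established in dimensions $3\leq d\leq 5$. Note also that the unrestricted tail decays only like $\lambda^{-(d-2)/(d+2)}$, so all of the required gain beyond exponent $1$ must come from the confinement to $B_{2\rho}$, i.e.\ from a large-deviation control of atypically dense critical clusters; this is a hard open-ended question that the paper never needs to face. A second, more technical issue is your appeal to ``the spatial Markov property'' to decouple the $\asymp t^{2d/(d+2)}$ boxes in the lower half of \eqref{eq:critlargestvol-Zd2}: the Markov property alone does not decorrelate well-separated boxes, and the paper instead obtains exact independence through the loop-soup isomorphism (loops entirely contained in disjoint enlarged balls are independent), controlling the crossing loops separately.

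The paper's proof of the lower bounds proceeds by an entirely different mechanism, which sidesteps any conditional second moment at the critical level. One tilts the field by an entropy (Cameron--Martin) argument \eqref{eq:entropy} down to a slightly negative level $-a$ with $a\asymp n^{-(d-2)/(d+2)}$ (so that the tilt over the relevant box costs only a constant), and at that level Proposition~\ref{pro:manyMeso} produces, with probability of order one, a \emph{single} connected cluster in $B(r)$ of volume $\asymp ar^{d}$ \emph{and} capacity $\asymp r^{d-2}$; this is done via the isomorphism \eqref{eq:isom}, gluing the critical sign clusters of large capacity (controlled through the capacity law \eqref{eq:capTail} and the sharp one-arm input $\sup q<\infty$) along interlacement trajectories passing through the maximal-capacity ``hub'' cluster. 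The polynomial prefactor $n^{-(d-2)/(d+2)}$ then arises not from a one-arm probability times a conditional constant, but from the cost, governed by the critical capacity tail \eqref{eq:cap-tail}/\eqref{eq:clustCapLocal}, of forcing the cluster of the origin to acquire capacity $\asymp\xi^{d-2}$ so that an interlacement trajectory connects it to the ambient large cluster with probability of order one (Lemma~\ref{lem:boundParF}). For \eqref{eq:critlargestvol-Zd2} the constant-probability bound is boosted to $1-\exp(-ct^{C})$ using independence of loop-soup clusters in disjoint balls. So while your architecture would work if your upper-tail estimate were true, as it stands the key lemma is unproven and its proposed proof uses tools unavailable for this correlated model; you would need either to establish that estimate by new means or to switch to the tilt-plus-isomorphism construction used in the paper.
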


The inequalities in~\eqref{eq:critvol-Zd} follow as a special case of Theorem~\ref{thm:volLB} and \eqref{eq:boundq-new}--\eqref{eq:critvol-ub}, with the choices $a=0$ and $\alpha = d = \nu+2$ therein. With the same choices, \eqref{eq:critlargestvol-Zd2} follows by combining Proposition~\ref{pro:upperMRa} (see below \eqref{eq:upperMR0}) and Theorem~\ref{the:mainMra} (see \eqref{eq:tailEstlargestVolgena=0}). Note that \eqref{eq:critvol-Zd} entails that the (fractional) moments $\E[|\mathcal{K}^0|^{\gamma}]$ of the critical cluster volume are finite if and only if $\gamma < \frac{d-2}{d+2}$. We also refer to Remark~\ref{rk:final},\ref{rk:volumeinball} for lower bounds on the expected  cardinality of $\K^0$ in a ball, which are presumably sharp.

We now briefly comment on the  bounds corresponding to \eqref{eq:critvol-Zd} for mean-field values of $d$.  
For $d=6$, one knows that for all $n \geq 1$,
\begin{equation}
    \label{eq:critvol-Z6}
   cn^{-\frac{1}{2}} \leq  \P(|\mathcal{K}^0| \geq n) \leq  C \tilde q(n) n^{-\frac{1}{2}},
\end{equation}
where $\tilde q(n)=1 \vee \exp\{ C \sqrt{\log n} \log \log n\}$; the lower bound is obtained from \eqref{eq:cap-tail} and sub-additivity of the capacity, by which $\text{cap}({\mathcal{K}}^0) \leq C|\mathcal{K}^0|$; see \eqref{eq:subadd} below. The upper bound is a special case of \eqref{eq:critvol-ub} below (which follows the line of argument used to prove \cite[Corollary 1.6]{DrePreRod5}) and the result of \cite{cai2024onearm}. For all $d \geq 7$, \eqref{eq:critvol-Z6} remains true but with $\tilde q(n)=1$.
The lower bound is obtained as above as already noted in \cite{cai2023onearm}, and the upper bound follows from \cite[Theorem~1.2]{cai2023onearm}. Unlike when $d\geq 7$, the lower bound in \eqref{eq:critvol-Z6} for $d=6$ may no longer be sharp. 

The inequality \eqref{eq:critlargestvol-Zd2} follows from  Proposition~\ref{pro:upperMRa}, Theorem~\ref{the:mainMra} and \eqref{eq:boundq-new}. Concerning $\LV_r^0$ in higher dimensions, we have when $d=6$
\begin{equation}
\label{eq:critlargestvol-Zd2meanfield}
\begin{split} 
    \P\big((1/t)r^{4}\tilde{q}(r)^{-1}\leq \LV^0_r\leq tr^{4}\hat{q}(r)\big)
        \geq 1- Ct^{-c}\text{ if }r^4\tilde{q}(r)^{-1}\geq Ct,
\end{split}
\end{equation}
where $\tilde{q}$ is as below \eqref{eq:critvol-Z6}, and $\hat{q}(r)=1$ for all $r\geq1$. The upper bound follows from Proposition~\ref{pro:upperMRa}, and the lower bound from \eqref{eq:qPsiBd}, Proposition~\ref{pro:meanfieldlower} and the results from \cite{cai2023onearm}. In the mean-field regime $d\geq7$, \eqref{eq:critlargestvol-Zd2meanfield} is still verified but with $\tilde{q}(r)=1$  and $\hat{q}(r)=\ln(r)$ for all $r\geq1$. The lower bound is still a consequence of Proposition~\ref{pro:meanfieldlower} and \cite{cai2023onearm}, whereas the upper bound follows from \cite[Lemma~5.3]{cai2023onearm}, see also \cite[Proposition~3]{werner2020clusters}.

Noteworthily, the tail behavior \eqref{eq:cap-tail} of the capacity observable, which holds on a large class of graphs, yields increasingly sharp information on the \emph{radius} of a cluster in \text{low} dimensions (see in particular \cite[(1.22) or Theorem 1.4(i)]{DrePreRod5}), whereas it yields sharp information on cluster \emph{volume} in \emph{high} dimensions (see \eqref{eq:critvol-Z6}, in particular when $d\geq 7$). This is related to the fact that, in \emph{low} dimensions, sets of a given diameter become more and more indistinguishable in terms of capacity, whereas the capacity, a sub-additive quantity, becomes increasingly additive (and thus close to the volume) in high dimensions.

Let us now summarize Theorem~\ref{T:critvol-Zd} for $3 \leq d\leq 5$ as well as the simple consequences \eqref{eq:critvol-Z6} and \eqref{eq:critlargestvol-Zd2meanfield}  from  \cite{werner2020clusters,cai2023onearm,cai2024onearm} for $d\geq6$ (note that the functions $\tilde{q}$ and $\hat{q}$ appearing therein are subpolynomial), in terms of the critical exponent $\delta$ for the tail of the cluster size and $d_f$ for the largest cluster size.

\begin{Cor}
\label{cor:deltaanddf}
On $\Z^d$ for $d\geq3$,
\begin{equation}
\label{eq:delta}
\begin{split} 
\delta\stackrel{\textnormal{def.}}{=}\lim\limits_{n\rightarrow\infty}\frac{\ln(n)}{\ln(\P(|\K^0|\geq n))}=
\begin{cases}
    \frac{d+2}{d-2}&\text{ if }d\leq 6,
    \\2&\text{ if }d\geq6,
\end{cases}
\end{split}
\end{equation}
and $\P$-a.s.
\begin{equation}
\label{eq:df}
\begin{split} 
d_f\stackrel{\textnormal{def.}}{=}\lim\limits_{r\rightarrow\infty}\frac{\ln(\LV_r^0)}{\ln(r)}=
\begin{cases}
    \frac{d+2}{2}&\text{ if }d\leq 6,
    \\4&\text{ if }d\geq6.
\end{cases}
\end{split}
\end{equation}
\end{Cor}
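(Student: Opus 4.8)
The plan is to read off both formulas from the quantitative two–sided estimates already recorded, passing to logarithms, and to invoke an elementary Borel--Cantelli argument for the almost-sure statement \eqref{eq:df}; no further input is needed. We first treat $\delta$. For $d\in\{3,4,5\}$, taking logarithms in \eqref{eq:critvol-Zd} gives $\ln\P(|\K^0|\ge n)=-\tfrac{d-2}{d+2}\ln n+O(1)$ as $n\to\infty$, which yields \eqref{eq:delta} with value $\tfrac{d+2}{d-2}$. For $d\ge6$ one argues identically from \eqref{eq:critvol-Z6}, using that $\tilde q$ is subpolynomial, i.e.\ $\ln\tilde q(n)=o(\ln n)$: then $\ln\P(|\K^0|\ge n)=-\tfrac12\ln n+o(\ln n)$, so \eqref{eq:delta} holds with value $2$ (for $d\ge7$ even the correction $\tilde q\equiv1$ can be dropped). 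Since $\tfrac{d+2}{d-2}=2$ at $d=6$, the two ranges overlap consistently.

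For $d_f$ with $d\in\{3,4,5\}$, fix $\epsilon>0$ small and apply \eqref{eq:critlargestvol-Zd2} along the dyadic sequence $r=2^k$ with $t=2^{\epsilon k}$ (admissible for all large $k$): this gives $\P\big(\LV^0_{2^k}\notin[\,2^{(\frac{d+2}{2}-\epsilon)k},\,2^{(\frac{d+2}{2}+\epsilon)k}\,]\big)\le C2^{-c\epsilon k}$, which is summable in $k$. By Borel--Cantelli, $\P$-a.s.\ one has $\tfrac{d+2}{2}-\epsilon\le\tfrac{\ln\LV^0_{2^k}}{k\ln2}\le\tfrac{d+2}{2}+\epsilon$ for all large $k$. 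Moreover $r\mapsto\LV^0_r$ is non-decreasing --- any connected component of the level set inside the ball of radius $r$ is contained in a connected component of the level set inside the ball of radius $r'$ as soon as $r'\ge r$ --- so for $2^k\le r<2^{k+1}$ we may sandwich $\LV^0_{2^k}\le\LV^0_r\le\LV^0_{2^{k+1}}$ together with $k\ln2\le\ln r<(k+1)\ln2$. Letting $r\to\infty$ this gives, $\P$-a.s., $\limsup_r\tfrac{\ln\LV^0_r}{\ln r}\le\tfrac{d+2}{2}+\epsilon$ and $\liminf_r\tfrac{\ln\LV^0_r}{\ln r}\ge\tfrac{d+2}{2}-\epsilon$; intersecting over a sequence $\epsilon=\epsilon_m\downarrow0$ proves \eqref{eq:df} with value $\tfrac{d+2}{2}$.

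In the mean-field regime $d\ge6$ the identical scheme applies, now starting from \eqref{eq:critlargestvol-Zd2meanfield}: since $\tilde q$ and $\hat q$ are subpolynomial, $\ln(r^4\tilde q(r)^{-1})=4\ln r+o(\ln r)$ and $\ln(r^4\hat q(r))=4\ln r+o(\ln r)$, so the same Borel--Cantelli and monotonicity argument yields $d_f=4$, $\P$-a.s.\ (one may take $\tilde q\equiv1$ when $d\ge7$), consistently with $\tfrac{d+2}{2}=4$ at $d=6$. The only step that is not pure bookkeeping with logarithms is the upgrade of the scale-fixed concentration estimate \eqref{eq:critlargestvol-Zd2} (resp.\ \eqref{eq:critlargestvol-Zd2meanfield}) to a statement holding almost surely and uniformly over scales: a crude union bound over all integer $r$ loses a factor polynomial in $r$ and is too weak, so one genuinely uses the monotonicity of $r\mapsto\LV^0_r$ to pass from the dyadic scales, where Borel--Cantelli is available, to arbitrary $r$.
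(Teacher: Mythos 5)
Your proposal is correct and follows essentially the route the paper intends: Corollary~\ref{cor:deltaanddf} is presented there as a direct summary of the two-sided bounds \eqref{eq:critvol-Zd}, \eqref{eq:critvol-Z6}, \eqref{eq:critlargestvol-Zd2} and \eqref{eq:critlargestvol-Zd2meanfield} (using that $\tilde q,\hat q$ are subpolynomial), and you derive it from exactly these inputs. The one step the paper leaves implicit --- upgrading the scale-by-scale concentration estimate to the $\P$-a.s.\ limit in \eqref{eq:df} via Borel--Cantelli along dyadic scales with $t=2^{\epsilon k}$ combined with the monotonicity of $r\mapsto \LV^0_r$ --- is supplied correctly in your argument.
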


Note that even the mere existence of the exponents $\delta$ and $d_f$ in \eqref{eq:delta} and \eqref{eq:df} was previously not known when $ 3 \leq d\leq5$. The value of $\delta$ from \eqref{eq:delta} when $d\leq 6$ was first conjectured in \cite[Table~1]{DrePreRod5} and identified as a question \lq of great interest\rq\ in \cite[Remark~1.4]{cai2023onearm}. The value of the fractal dimension $d_f$, cf.\ \cite{StAh-18}, in \eqref{eq:df} when $d\leq 6$ has been conjectured in \cite[Conjecture~A and C]{werner2020clusters}. We refer to Remark~\ref{rk:final},\ref{rk:wernerconjecture} for more details as to why \eqref{eq:critlargestvol-Zd2} partially solves further conjectures from \cite{werner2020clusters}. 
 Alternatively, one could define $d_f$ via the average of $\LV^0_r$, that is $\E\big[\LV^0_r\big]=r^{d_f+o(1)}$, which would still satisfy $d_f=(d+2)/2$ when $d\leq 6$ by \eqref{eq:critlargestvol-Zd2} and \eqref{eq:upperMR0}. When $d\geq6$, the critical exponents $\delta$ and $d_f$ from \eqref{eq:delta} and \eqref{eq:df} correspond to the mean-field values identified for high dimensional independent percolation in \cite[Corollary~4.2]{Bar-Aiz91},  \cite[Theorem~5]{MR1431856} and \cite[Theorem~1.1]{HarSla90}, see also  Corollary~5.2 and Theorems~9.2, 11.4 and 13.22 in \cite{HeHo-17}. 
 
 The fact that many critical exponents have a simple form for $d\leq6$, and then take their mean-field value for $d\geq6$ was first conjectured in \cite{DrePreRod5}, and then proved in the case of the critical one-arm exponent $\rho$ in \cite{DiWi,cai2023onearm,DrePreRod8,cai2024onearm}. Corollary~\ref{cor:deltaanddf} thus builds on these results  (except for $\delta$ and the upper bound on $d_f$ in dimension $d\geq7$ which was already obtained in \cite{werner2020clusters,cai2023onearm})  to prove that this is also the case for the critical volume exponents $\delta$ and $d_f$.

Our results on critical cluster volumes are actually far more general than Theorem~\ref{T:critvol-Zd}. To illustrate this, consider for example graph obtained as a Cartesian product of $\Z^2$ with the discrete skeleton of the Sierpinski gasket; cf.~\cite[Fig.~1]{DrePreRod2} (the product with $\Z^2$ ensures this graph is transient). Then the results of Section~\ref{sec:mainresults} yield that for this graph, for all $n,r\geq 1$,
\begin{equation}\label{eq:critvol-toblerone0}
cn^{-\frac{1}{\delta}} \leq  \P(|\mathcal{K}^0| \geq n) \leq  Cn^{-\frac{1}{\delta}}\text{ and }cr^{d_f} \leq  \E\big[\LV^0_r\big] \leq  Cr^{d_f},
\end{equation}
with
\begin{equation}
    \label{eq:critvol-toblerone}
    \delta =\frac{2\log (3) + 4 \log (5)}{2\log( 3)}= 3.929947...\text{ and }d_f=\frac{\log (3) + 2 \log (5)}{2\log (2)}=2.533927...
\end{equation}
 In particular, \eqref{eq:critvol-toblerone0}--\eqref{eq:critvol-toblerone} illustrate that our results remain valid in cases (unlike \eqref{eq:critvol-Zd}) where the exponents $\delta$ and $d_f$ are not at all nice (algebraic) numbers.  
In order to obtain \eqref{eq:critvol-toblerone0}, one needs to combine \eqref{eq:boundq-new}, \eqref{eq:critvol-ub}, \eqref{eq:tailEstVolgen}, \eqref{eq:upperMR0} and \eqref{eq:tailEstlargestVolgen} for $a=0$, $\alpha=\frac{\log(3)+\log(5)}{\log(2)}$ and $\nu=\frac{\log(3)}{\log(2)}$ 
(see \cite[Proposition~3.5]{DrePreRod2} and \cite{MR1378848}), which satisfy $1<\nu<\alpha/2$ (and hence one can apply the results from \cite{DrePreRod8}). This is but one example. Our results do in fact hold under mild assumptions on the underlying graph, comprising mainly a suitable polynomial volume growth assumption for balls, and the polynomial decay of the Green function for the Laplacian; see Section~\ref{sec:mainresults} for details (the assumptions just mentioned correspond to conditions~\eqref{eq:intro_sizeball} and~\eqref{eq:intro_Green} appearing there). We refer to the introduction of \cite{DrePreRod2}, in particular (1.4) therein, for a host of other examples to which our results apply. These include, among others, all Cayley graphs of finitely generated groups with suitable volume growth.

 \bigskip
We now present our results in the off-critical case, formulated in the benchmark cases of $\Z^d$, $d \geq 3$, for the sake of simplicity. These results concern $|\mathcal{K}^a|$ when $a \neq 0$. We first focus on the tail of the (truncated) cluster size distribution, which we show in Proposition~\ref{pro:easyoffcriticalbounds} satisfy
\begin{equation}
\label{eq:offcritvol-n}
\exp\big\{ - C(a^2\vee1) n^{\frac{d-2}{d}}  \big\} \leq  \P(n \leq |\mathcal{K}^a| < \infty) \leq \exp\big\{ - ca^2n^{\frac{d-2}{d}}  \big\} ,
\end{equation}
 for all $d\geq3$, $a\in{\R}$ and $n\geq1$. The proof of the inequalities \eqref{eq:offcritvol-n} on the cables
follows from by now relatively standard arguments. One can dispense with the truncation $|\mathcal{K}^a| < \infty$ in \eqref{eq:offcritvol-n} when $a>0$ since this occurs $\P$-a.s. We can also prove bounds similar to \eqref{eq:offcritvol-n} when replacing $|\mathcal{K}^a|$ by $\LV^a_r$, but only for $r$ large enough (depending on $a$ and $n$), see Remark~\ref{rk:final},\ref{rk:volumeinball}. The inequalities \eqref{eq:offcritvol-n} exhibit a very different tail behavior for the cluster size distribution than in the case of Bernoulli percolation for instance. The decay in \eqref{eq:offcritvol-n} is sub-exponential (and identical) in both sub-critical ($a>0$) and super-critical $(a<0)$ regimes, whereas for Bernoulli percolation, the decay is exponential in the sub-critical phase \cite{MR0633715}, and decays exponentially in $n^{\frac{d-1}{d}}$ in the supercritical phase \cite{MR0594824,MR1055419}; see also \cite{MR4734554}  for related results. Intuitively, the sub-critical Bernoulli cluster is much more \emph{amorphous} than $\mathcal{K}^a$, $a>0$, where long-range correlations are present. The discrepancy in the super-critical direction is owed to a surface-order (rather than capacitary) cost needed to isolate a large volume from the ambient infinite cluster.

The bounds \eqref{eq:offcritvol-n} are clearly optimal up to constants when $|a|\geq1$, i.e.~far away from criticality. Our main off-critical result yields an improvement over the first bound in \eqref{eq:offcritvol-n} in low dimensions by providing a better lower bound when $|a|\leq 1$, and in particular close to the critical point.

\begin{The}
\label{T:offcritvol-Zd-LB}
For each $d\in \{ 3,4,5 \}$, there exist constants $c = c(d), C = C(d)\in (0,\infty)$ such that, for all $a \in [-1,1]$  and all $n \geq 1$, 
\begin{equation} \label{eq:offcritvol-Zd-LB}
\P\big( n \leq |\mathcal K^a| <\infty \big) \ge
  c\P\big( n \leq |\mathcal K^0| <\infty \big)  \exp \big\{ -C |a|^{\frac{d+2}{d}}n^\frac{d-2}{d} \big\}.  
\end{equation}
\end{The}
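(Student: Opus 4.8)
The plan is to prove \eqref{eq:offcritvol-Zd-LB} by a change-of-measure (tilting) argument: starting from a configuration where $|\mathcal K^0|\geq n$ (with the cluster finite), lower the level to $a<0$ (or raise it to $a>0$) in a controlled way so that the event $\{n\le|\mathcal K^a|<\infty\}$ still occurs, and pay only the advertised exponential cost $\exp\{-C|a|^{(d+2)/d}n^{(d-2)/d}\}$. Concretely, recalling the metric-graph description, I would work on the event $\{n\le|\mathcal K^0|<\infty\}$ and note that a finite critical cluster of volume $\ge n$ is typically contained in a ball of radius $R\asymp n^{2/(d+2)}$ (by the one-arm/volume relation underlying Theorem~\ref{T:critvol-Zd}, or more robustly by a union bound using \eqref{eq:cap-tail} together with sub-additivity of capacity). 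The key observation is that shifting the free field by a deterministic harmonic-type profile that equals (roughly) $a$ on this ball costs, via the Cameron–Martin / Girsanov formula, an amount controlled by the Dirichlet energy of the profile, which is of order $a^2\,\mathrm{cap}(B_R)\asymp a^2 R^{d-2}\asymp a^2 n^{(d-2)/(d+2)\cdot\ldots}$; one has to be careful here, and it is exactly the optimization of the radius $R$ of the region on which one performs the shift against the probabilistic gain that produces the exponent $|a|^{(d+2)/d}n^{(d-2)/d}$ rather than the naive $a^2 n^{(d-2)/d}$ from \eqref{eq:offcritvol-n}.

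The steps, in order, would be: (i) fix $a\in[-1,1]$, $n\ge1$, and a scale $R=R(a,n)$ to be optimized; (ii) on a box $B_{2R}$, decompose the metric-graph free field as $\phi=\phi^{B_{2R}}+h$, where $\phi^{B_{2R}}$ is the field with zero boundary condition on $\partial B_{2R}$ and $h$ is the harmonic extension of the boundary values; (iii) use the fact that, conditionally on the boundary data being bounded (an event of probability bounded below uniformly in $R\le n^{O(1)}$), the law of $\phi^{B_{2R}}$ shifted by the constant $-a$ — or more precisely by the harmonic function interpolating between $0$ near the cluster and $-a$ near $\partial B_{2R}$ — is absolutely continuous with respect to the unshifted law, with Radon–Nikodym derivative whose logarithm is controlled by the Dirichlet energy $\tfrac12\|\nabla h_a\|_2^2\le C a^2\,\mathrm{cap}_{B_{2R}}(B_R)\le C a^2 R^{d-2}$; (iv) check that on the tilted field, the event $\{|\mathcal K^0|\ge n\}$ (with the cluster inside $B_R$) transforms into $\{|\mathcal K^a|\ge n\}$ while preserving finiteness (here $a>0$ is automatic; for $a<0$ one additionally confines the cluster to $B_{2R}$, which is where the restriction to $B_R$ and the harmonic profile vanishing there are used); (v) combine (iii) and (iv) via a Cauchy–Schwarz / Paley–Zygmund or a direct entropy bound on the change of measure to get $\P(n\le|\mathcal K^a|<\infty)\ge c\,e^{-Ca^2R^{d-2}}\,\P(n\le|\mathcal K^0|\le|\mathcal K^0|,\ \mathcal K^0\subset B_R)$; (vi) lower bound the last probability by $c'\,\P(n\le|\mathcal K^0|<\infty)$ provided $R\gg n^{2/(d+2)}$ (using that the critical cluster of volume $n$ lives at scale $n^{2/(d+2)}$ — this is where the low-dimensional one-arm result and Theorem~\ref{T:critvol-Zd} enter); (vii) optimize: choosing $R^{d-2}\asymp |a|^{-\theta}n^{(d-2)\beta}$ so that $a^2 R^{d-2}\asymp |a|^{(d+2)/d}n^{(d-2)/d}$ and simultaneously $R\gg n^{2/(d+2)}$ — this forces $R\asymp |a|^{-2/d}n^{2/(d+2)}$, which is $\gg n^{2/(d+2)}$ precisely because $|a|\le1$, and then $a^2R^{d-2}\asymp |a|^{2-2(d-2)/d}n^{2(d-2)/(d+2)}$; a short computation has to be reconciled with the target exponent, and I expect the correct scale is $R\asymp |a|^{-2/(d+2)}$ times the critical scale, giving exactly $|a|^{(d+2)/d}n^{(d-2)/d}$ after using $n^{(d-2)/(d+2)}\cdot(\text{critical scale adjustment})$.

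The main obstacle is step (iii)–(iv): making the tilting rigorous on the metric graph and verifying that the shifted event is genuinely the off-critical connectivity event with the correct truncation. The subtlety is that lowering the level globally would cost $a^2\,\mathrm{cap}(\mathbb Z^d)=\infty$, so one must shift only locally and interpolate to $0$ (or rather to the ambient value) on an annulus, and one must control both that this local shift still raises/lowers all the relevant edge weights in \eqref{eq:disrete_model} in the right direction and that the harmonic interpolation's energy is $O(a^2R^{d-2})$ and not larger. For $a<0$ there is the additional point that $\{|\mathcal K^a|<\infty\}$ is not automatic, so one needs to pay an extra (but subexponential-in-the-relevant-scale, hence absorbable) cost to disconnect $B_{2R}$ from the infinite cluster; bounding this cost by the capacity of the annulus, $O(R^{d-2})$, and checking it is dominated by the main term is the delicate accounting. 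I would expect to borrow the local-shift machinery and the annulus-disconnection estimates from \cite{DrePreRod5,DrePreRod3} and the sprinkling/one-arm inputs from \cite{cai2024onearm,DrePreRod8}, and the routine but careful part is the optimization in (vii) and checking it matches $|a|^{(d+2)/d}n^{(d-2)/d}$.
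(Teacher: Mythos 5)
Your plan---retain a \emph{critical} cluster of volume $n$ (living at scale $n^{2/(d+2)}$) and tilt the field on a ball containing it---cannot produce the claimed exponent in the regime where it matters. The two constraints in your step (vii) are incompatible: if $R\gtrsim n^{2/(d+2)}$ (needed so that $\{\K^0\subset B_R\}$ retains a constant fraction of $\P(n\le|\K^0|<\infty)$), then the Cameron--Martin cost is at least $a^2R^{d-2}\gtrsim a^2n^{2(d-2)/(d+2)}$, and $a^2n^{2(d-2)/(d+2)}\le C|a|^{\frac{d+2}{d}}n^{\frac{d-2}{d}}$ forces $|a|\lesssim n^{-\frac{d-2}{d+2}}$, i.e.\ exactly the regime where the exponential in \eqref{eq:offcritvol-Zd-LB} is of unit order anyway (your own trial scale $R\asymp|a|^{-2/d}n^{2/(d+2)}$ gives $a^2R^{d-2}\asymp|a|^{4/d}n^{2(d-2)/(d+2)}$, not the target). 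In the main regime $|a|\gg n^{-\frac{d-2}{d+2}}$ the correct scale is $r\asymp(n/|a|)^{1/d}$ (cf.\ \eqref{eq:heur2} and \eqref{eq:choicebr}), which is \emph{smaller} than the critical scale, so the volume-$n$ cluster at level $a$ is not a tilted critical cluster but a dense, ``supercritical-looking'' object; the paper manufactures it via the interlacement isomorphism \eqref{eq:isom} at level $u\asymp a^2$ and the second-moment/hub construction of Proposition~\ref{pro:manyMeso}, which guarantees a \emph{single} connected cluster in $B(r)$ with volume $\gtrsim|a|r^d\asymp n$ \emph{and} capacity $\gtrsim r^{d-2}$ (this is where $d\le 5$, i.e.\ $\sup q<\infty$ from \cite{cai2024onearm}, enters). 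Nothing in your proposal addresses why the requisite volume sits in one component at that scale; on $\Z^d$, $d\ge 7$, it does not, so this step cannot be soft.

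There are two further structural problems. First, the prefactor $\P(n\le|\K^0|<\infty)\asymp n^{-\frac{d-2}{d+2}}$ in \eqref{eq:offcritvol-Zd-LB} is not obtained in the paper by keeping a critical cluster of volume $n$; it is the cost $\asymp|a|\asymp\xi^{-(d-2)/2}$ of forcing the cluster of the origin to acquire capacity $\asymp\xi^{d-2}$ within the correlation scale $\xi(a)=|a|^{-2/(d-2)}$ (Lemma~\ref{lem:capbox}, used in \eqref{eq:LBcond0}), after which the connection to the big cluster is made by an interlacement trajectory at intensity $\asymp a^2$ (Lemma~\ref{lem:boundParF}) at cost of constant order. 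Second, the entropy inequality \eqref{eq:entropy} carries the reciprocal of the probability of the target event (under the tilted law) inside the exponent. Tilting directly on your event $\{n\le|\K^0|<\infty,\ \K^0\subset B_R\}$, whose probability is polynomially small in $n$, inflates the exponent by that reciprocal and destroys the bound; the paper avoids this precisely by paying the polynomial factor \emph{outside} the exponential (the capacity forcing above) so that the event fed into \eqref{eq:entropy} has probability bounded below uniformly in $a$. Your step (v) as written does not confront this, and the $a<0$ disconnection cost you mention is a secondary issue by comparison (the paper reduces $a<0$ to $a>0$ by the symmetry of $|\K^{\pm a}|\cdot 1\{|\K^{\pm a}|<\infty\}$). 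So the proposal, as it stands, proves at best \eqref{eq:offcritvol-Zd-LB} with the weaker exponent $a^2n^{2(d-2)/(d+2)}$ (or only in the trivial regime), and misses the key mechanism---isomorphism plus single-cluster volume/capacity creation at scale $(n/|a|)^{1/d}$---that yields $|a|^{\frac{d+2}{d}}n^{\frac{d-2}{d}}$.
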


The combination of critical and off-critical lower bounds supplied by Theorem~\ref{T:critvol-Zd} and Theorem~\ref{T:offcritvol-Zd-LB} below actually covers all values of $(n,a)$; cf.~Theorem~\ref{thm:volLB} below, of which Theorem~\ref{T:offcritvol-Zd-LB} is a special case in view of \eqref{eq:boundq-new}.
We believe that the quantitative lower bound in \eqref{eq:offcritvol-Zd-LB} is actually sharp, see the discussion below  Corollary~\ref{C:offcritvol-mom} for details, and it would be very interesting to derive corresponding upper bounds. We refer to Remark~\ref{rk:final},\ref{rk:optimalboundmeanfield} for a conjecture for similar sharp bounds in the mean-field regime $d\geq7$, and to Remark~\ref{rk:final},\ref{rk:d=6} for a result akin to \eqref{eq:offcritvol-Zd-LB} but with additional subpolynomial corrections in the exponential in the critical dimension $d=6$. One can also derive presumably sharp off-critical lower bound for the largest cluster volume $\LV_r^a$ which generalize the critical result \eqref{eq:critlargestvol-Zd2}, and we refer to Proposition~\ref{pro:upperMRa}, Theorem~\ref{the:mainMra} and the discussion below for details.

The above results can be applied to infer corresponding lower bounds on moments of the cluster size close to the critical point.

\begin{Cor}
\label{C:offcritvol-Zd-mom}
    For each $d\in \{ 3,4,5 \}$ and each $k \geq 1$, there exists $c=c(d,k) \in (0,\infty)$ such that for all $a \in [-1,1]$, 
    \begin{align} \label{eq:offcritvol-Zd-mom}
    \E\big[|\mathcal K^{a} |^k\cdot 1\{|\mathcal{K}^a|<\infty\}\big]  \ge c|a|^{1-k\frac{d+2}{d-2}}.
\end{align}
\end{Cor}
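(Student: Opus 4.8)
The plan is to obtain \eqref{eq:offcritvol-Zd-mom} as a direct consequence of the layer-cake (distribution-function) representation of moments combined with the lower bound \eqref{eq:offcritvol-Zd-LB} of Theorem~\ref{T:offcritvol-Zd-LB} and the critical lower bound \eqref{eq:critvol-Zd} of Theorem~\ref{T:critvol-Zd}. First I would write, for $a\in[-1,1]$ and $k\geq 1$,
\begin{equation*}
\E\big[|\mathcal K^{a}|^k\cdot 1\{|\mathcal K^a|<\infty\}\big]=k\int_0^\infty t^{k-1}\,\P\big(t\leq |\mathcal K^a|<\infty\big)\diff t\geq k\sum_{n\geq 1} (n-1)^{k-1}\,\P\big(n\leq |\mathcal K^a|<\infty\big),
\end{equation*}
so that up to harmless constants it suffices to lower bound $\sum_{n\geq 1} n^{k-1}\P(n\leq |\mathcal K^a|<\infty)$. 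Plugging in \eqref{eq:offcritvol-Zd-LB} and then the lower bound $\P(n\leq|\mathcal K^0|<\infty)\geq \P(|\mathcal K^0|\geq n)-\P(|\mathcal K^0|=\infty)=\P(|\mathcal K^0|\geq n)\geq c n^{-\frac{d-2}{d+2}}$ (recall $|\mathcal K^0|<\infty$ $\P$-a.s.\ since $a=0$ is critical, and use \eqref{eq:critvol-Zd}), one is left with bounding from below
\begin{equation*}
\sum_{n\geq 1} n^{k-1-\frac{d-2}{d+2}}\exp\big\{-C|a|^{\frac{d+2}{d}}n^{\frac{d-2}{d}}\big\}.
\end{equation*}

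Next I would evaluate this sum by the standard Laplace/Riemann-sum heuristic: setting $\lambda=|a|^{\frac{d+2}{d}}$ and $\beta=\frac{d-2}{d}\in(0,1)$, the sum $\sum_{n\geq1} n^{\gamma}e^{-C\lambda n^{\beta}}$ with $\gamma=k-1-\frac{d-2}{d+2}$ behaves, as $\lambda\downarrow 0$, like a constant times $\lambda^{-(\gamma+1)/\beta}$. Concretely, this can be made rigorous by restricting the sum to the range $n\in[\delta\lambda^{-1/\beta},\,2\delta\lambda^{-1/\beta}]$ for a small fixed $\delta=\delta(d,k)>0$, on which the exponential is bounded below by a positive constant and each summand is at least $c\,(\lambda^{-1/\beta})^{\gamma}$, while the number of terms is of order $\lambda^{-1/\beta}$; this gives a lower bound of order $\lambda^{-(\gamma+1)/\beta}$. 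It remains to compute the exponent:
\begin{equation*}
-\frac{\gamma+1}{\beta}=-\Big(k-\tfrac{d-2}{d+2}\Big)\cdot\frac{d}{d-2}=-k\frac{d}{d-2}+\frac{d}{d+2},
\end{equation*}
and therefore the whole lower bound is of order $|a|^{\frac{d+2}{d}\cdot(-k\frac{d}{d-2}+\frac{d}{d+2})}=|a|^{-k\frac{d+2}{d-2}+1}$, which is exactly \eqref{eq:offcritvol-Zd-mom}. One should finally note that the constant $c(d,k)$ produced this way is positive and finite, and that the restriction $a\in[-1,1]$ is used only to guarantee $\lambda\leq 1$ so that the chosen summation window $[\delta\lambda^{-1/\beta},2\delta\lambda^{-1/\beta}]$ is nonempty (for $|a|$ bounded away from $0$ the bound is trivial by monotonicity, adjusting $c$).

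The argument is essentially bookkeeping, so there is no serious obstacle; the only point requiring mild care is the elementary lower bound on the sum $\sum_n n^\gamma e^{-C\lambda n^\beta}$, and in particular checking that the window $[\delta\lambda^{-1/\beta},2\delta\lambda^{-1/\beta}]$ contains an integer and that $\gamma>-1$ so the summands do not blow up near $n=1$ in a way that would spoil the estimate — but $\gamma=k-1-\frac{d-2}{d+2}>-1$ holds for all $k\geq 1$, $d\geq 3$, so this is automatic. A secondary (and equally routine) point is that one could alternatively invoke $\P(n\leq |\mathcal K^0|<\infty)\geq c n^{-\frac{d-2}{d+2}}$ already packaged in Theorem~\ref{thm:volLB} with $a=0$ (of which Theorem~\ref{T:offcritvol-Zd-LB} is itself the special case), thereby streamlining the two inputs into a single application; either route yields the claim.
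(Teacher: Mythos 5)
Your proposal is correct and follows essentially the same route as the paper: the paper establishes the general Corollary~\ref{C:offcritvol-mom} by the identical layer-cake representation of the $k$-th moment, substitutes the tail bound \eqref{eq:tailEstVolgen} of Theorem~\ref{thm:volLB} (whose $\Z^d$ specialization is exactly your combination of \eqref{eq:offcritvol-Zd-LB} with \eqref{eq:critvol-Zd}), and evaluates the resulting integral by a change of variables rather than your Riemann-window argument, before specializing to $\alpha=d=\nu+2$. The only point to tighten is your aside that $|a|$ bounded away from $0$ is handled ``by monotonicity'' (the event $\{n\leq|\mathcal K^a|<\infty\}$ is not monotone in $a$); this case follows instead by retaining a single term of the sum you already derived, which is bounded below by a positive constant uniformly in $|a|\leq1$, while the right-hand side of \eqref{eq:offcritvol-Zd-mom} is bounded above by a constant on that range.
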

We refer to Corollary~\ref{C:offcritvol-mom} for a more general result, by which \eqref{eq:offcritvol-Zd-mom} becomes a special case of \eqref{eq:offcritvol-mom2} (with the choices $\alpha=d=\nu+2$) on account of \eqref{eq:boundq-new}.  The bound \eqref{eq:offcritvol-Zd-mom} on the first moment ($k=1$) is already known on $\Z^3$ together with a matching upper bound, see \cite[Corollary~1.5]{DrePreRod5}, where it was obtained by different means. In dimension four, an upper bound matching \eqref{eq:offcritvol-Zd-mom} for $k=1$ up to logarithmic corrections follows from \cite[Theorem~1.2]{DrePreRod8}, see below Corollary~1.4 therein for details, and one readily deduces  by combining \eqref{eq:offcritvol-Zd-mom} for $k=1$ with the previous upper bound from \cite{DrePreRod8} that the critical exponent  $\gamma$ as defined in \cite[(1.16)]{DrePreRod8} satisfies  $\gamma=2$ on $\Z^4$. 
The behavior of higher moments is conventionally parametrized in terms of an exponent $\Delta >0$ capturing the polynomial divergence of ratios of moments as in \eqref{eq:offcritvol-Zd-mom} for (integer) $k$ and $k+1$ as $a \to 0$, which conjecturally does not to depend on the value of $k$. If complemented by matching upper bounds, the results of Corollary~\ref{C:offcritvol-Zd-mom} would validate the conjectured value $\Delta=\frac{2d}{d-2}-1$ from \cite[Table I]{DrePreRod5}. This circumstance is also among the reasons why we expect the estimate \eqref{eq:offcritvol-Zd-LB} to be sharp in dimensions $d\in{\{3,4,5\}}$. We hope to return to  all of this in future work. In the mean-field regime $d\geq7$, the bound \eqref{eq:offcritvol-Zd-mom} is actually not true anymore, and we refer to \eqref{eq:offcritvol-mom1} and the discussion below for a better bound which is expected to be sharp in this regime. 

\begin{figure}
    \centering
    \includegraphics[width=0.70\linewidth]{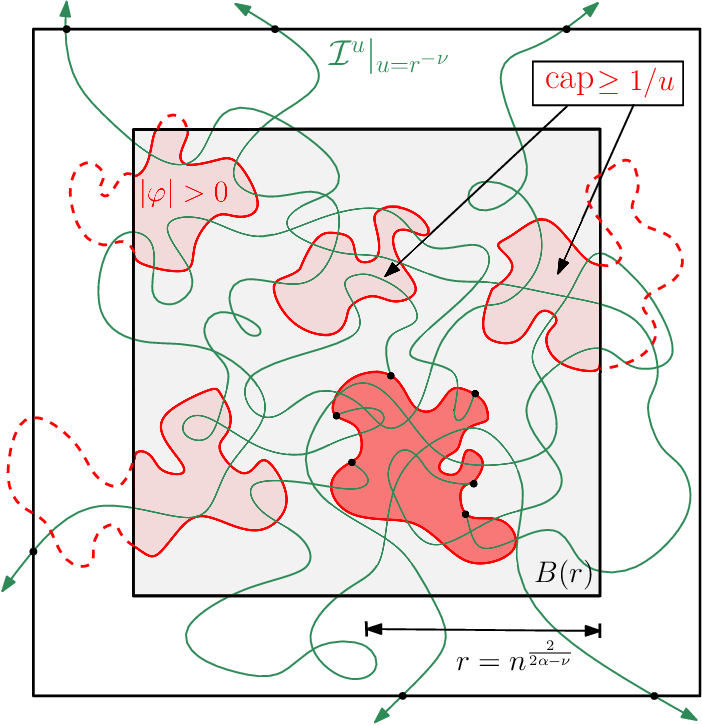}
    \caption{Constructing a large \emph{critical} cluster (at height $a=0$) comprising $n$ points (with $\alpha=d=\nu+2$ in the case of  $\Z^d$). The cluster is roughly obtained as the union of all the macroscopic sign clusters (light+dark red) inside a box of radius $r$ hit by an independent interlacement (green) at level $u=r^{-\nu}$, for the above fine-tuned scale $r=r(n)$. More precisely, and to ensure that this set is connected, one retains only (the forward part of) all the interlacement trajectories that intersect the sign cluster with largest capacity inside $B(r)$ (dark red), which acts as a \emph{hub}. The trajectories are discarded upon exiting a slightly larger box so as for the event to remain local. The picture remains pertinent when constructing a large \emph{off-critical} cluster (at height $a\neq 0$), for a different choice of $r=r(a,n)$ given by \eqref{eq:heur2}  and with $u\asymp |a|^2$. The cross-over between the two choices for $r$ occurs precisely when the exponential term appearing in Theorem~\ref{T:offcritvol-Zd-LB} is of unit order.    }
    \label{fig:vol-gff}
\end{figure}

\bigskip
 We now comment on the proofs of our main results, Theorems~\ref{T:critvol-Zd} and~\ref{T:offcritvol-Zd-LB}.  To avoid being overly technical, we will not attempt to give a full overview of the proofs here but rather aim to give a flavor of the mechanisms lurking behind the various tails in \eqref{eq:critvol-Zd}, \eqref{eq:critlargestvol-Zd2} and \eqref{eq:offcritvol-Zd-LB}.
We refer to Figure~\ref{fig:vol-gff} for visual aid accompanying the following discussion.

 We start with the lower bounds in \eqref{eq:critvol-Zd} and \eqref{eq:critlargestvol-Zd2}, the upper bounds being simple consequences of, respectively, \cite{cai2024onearm} and the reasoning from \cite[Corollary 1.6]{DrePreRod5} in the case of \eqref{eq:critvol-Zd}, and of \cite[Proposition~5.2]{MR3502602} and the Cauchy-Schwarz inequality, see Proposition~\ref{pro:upperMRa}, in the case of \eqref{eq:critlargestvol-Zd2}. In low dimensions ($d=3,4,5$), two opposite forces at play are the following. On the one hand, rather than witnessing a (large) cluster at height $a=0$ inside some box $B=B(r)$, for an appropriate choice of $r$ that  will be fixed later, one can typically afford to consider a height $-b<0$ instead, as long as $ r \leq \xi(b)\stackrel{\textnormal{def.}}{=}b^{-2/(d-2)}$; one way to see this is that shifting the field by $-b$ inside that box will occasion an entropic (Cameron-Martin type) cost of order $e^{-cb^2 \text{cap}(B)} \geq c$, see \eqref{eq:entropy} and \eqref{eq:capBallBd}. On the other hand, $\xi(b)$ is a lower bound for a scale $r$ that the infinite cluster in $\{ \varphi \geq -b\}$ (for $b>0$) can penetrate; this can be seen via appropriate isomorphism theorem, cf.~\eqref{eq:isom} below. Heuristically, the field ``chooses'' $r\asymp \xi(b)$ as the optimal scale at which these two effects are simultaneously in force, for a value of $b$ such that the desired volume constraint $n$ in \eqref{eq:critvol-Zd} is met. Since the infinite cluster has a significant presence at this scale, this roughly amounts to tuning $b$ (for given $n$) such that
\begin{equation}\label{eq:heur1}
n=\theta(b) \xi(b)^d \asymp |b|^{-\frac{d+2}{d-2}},
\end{equation}
where $\theta(b) \asymp |b|$ ($b<0$) is the density of the infinite cluster, cf.~\cite[Corollary 1.2]{DrePreRod5}. The choice for $b$ in \eqref{eq:heur1} leads to the box size $r= n^{-\frac2{d+2}}$($\asymp\xi(b)$) in Figure~\ref{fig:vol-gff}. Although  this critical scale is chosen so that the infinite cluster has a significant presence (i.e.~size $n$) in \eqref{eq:heur1}, it is however not clear at all whether it mainly consists of one giant connected component at scale $\xi(b)$ containing most of its $n$ points, or if it splits into several much smaller connected components in $B(\xi(b))$. The latter will in fact occur on $\Z^d$, $d\geq7$ , see Remark~\ref{rk:final},\ref{rk:boundinanydimension}, and proving that this is not the case in dimension $d\in{\{3,4,5\}}$ thus requires further arguments, which are depicted in Figure~\ref{fig:vol-gff} and to which we return below.  Foregoing this issue for the time being, \eqref{eq:heur1} implies that the probability that $0$ is in the infinite cluster at level $-b$ is of order $\theta(b)\asymp |b|\asymp n^{-\frac{d-2}{d+2}}$, yielding \eqref{eq:critvol-Zd} since the infinite cluster at level $-b$, and hence $0$ by the previous choice of $r$ and $b$, has a connected component of size of order $n$. A variation of this reasoning accounts for the typical size $M_r^0$ of the largest cluster inside a box of (given) radius $r$  appearing in \eqref{eq:critlargestvol-Zd2}. Indeed, $r$ is now fixed and one can effectively afford to look for a large cluster at level $b(<0)$ such that $|b|= r^{\frac{2-d}{2}}$ (so that $r=\xi(b)$), at which the infinite cluster will contribute $\theta(b) r^d\asymp |b|r^d= r^{\frac{2+d}2}$ points.

We now discuss \eqref{eq:offcritvol-Zd-LB}, and in so doing also refine the above heuristic picture. For a given level $a$ with $|a|\ll 1$, by the same principle as in
\eqref{eq:heur1} (with $b=a$), the correct length scale $r$ to look for a cluster of volume $n$ is given by solving
\begin{equation}\label{eq:heur2}
n=\theta(a)r^d\asymp|a|r^d, \quad (|a|\leq 1),
\end{equation}
that is $r=(n/|a|)^{1/d}$. The exponential decay in \eqref{eq:offcritvol-Zd-LB} then arises as an entropic cost  \eqref{eq:entropy} of the order $e^{-ca^2 r^{d-2}}$ for the choice of $r$ in \eqref{eq:heur2}. However, while this is good enough for the purpose of creating a large cluster (as needed for $M_r^a$ to be large), connecting it to the origin requires not only ensuring that the cluster has large volume but also a large capacity (see the end of Section~\ref{sec:pre} as to why; roughly speaking, the connection to the cluster of $0$ happens through an interlacement trajectory at intensity of order $a^2$). Furthermore, additional complications will be caused by the probability appearing in the exponential of the entropy bound \eqref{eq:entropy}. We refer to the end of Section~\ref{sec:pre} for more on this.

It remains to explain how to obtain the simultaneous occurrence of \emph{both} large volume, in the sense of \eqref{eq:heur2},  and capacity of a \emph{single} cluster  among all the possible clusters of the infinite connected component at scale $r$ (with sizeable probability even if $r$ is of the same order as the critical scale $\xi$) in dimension $d\in{\{3,4,5\}}$, which is the object of the central Proposition~\ref{pro:manyMeso}. It crucially takes advantage of the (signed) isomorphism \eqref{eq:isom} with interlacements, as depicted schematically in Fig.~\ref{fig:vol-gff}, but now with $r$ as given below \eqref{eq:heur2}.  In essence, we consider the critical cluster with the largest capacity in $B(r)$ (in darker red) to ensure a capacity of the same order as that of $B(r)$, which is possible for $d\in{\{3,4,5\}}$ using the result from \cite{cai2024onearm}, see Lemma~\ref{L:cap-LB} for details. We then add all the interlacements trajectories (in green) which intersect this giant cluster, as well as all the critical clusters (in red) which intersect those trajectories, which validate the volume condition. The isomorphism then asserts that in this picture both red and green can be viewed as part of the large cluster of interest, which is indeed connected at scale $r$ by definition. This is enough to obtain an analogue of \eqref{eq:offcritvol-Zd-LB} for the largest cluster (cf.~Theorem~\ref{the:mainMra} below). To make the cluster \emph{of $0$} large, an additional cost of order $ \xi^{-(d-2)/2} = |a|$ (with $\xi=\xi(a)$) is incurred in order to make sure the cluster of $0$ in $B(\xi)$ at height $a$ has large capacity (not depicted in Fig.~\ref{fig:vol-gff}). This additional cost can essentially be viewed as governed by the tail \eqref{eq:cap-tail} of the \emph{critical} capacity, if one forces it to be of order $\xi^{d-2}$; see Lemma~\ref{lem:capbox} as to how this rigorously transfers to height $a \neq 0$. As it turns out, after paying the price $|a|$ to ``blow it up,'' the cluster of the origin connects to the ambient large cluster with probability of order one uniformly in $a$. In the critical case \eqref{eq:critvol-Zd} (with $b$ in place of $a$), this additional cost precisely governs the order of magnitude ($=|b|$) of the probability in question in the discussion following \eqref{eq:heur1}.

\bigskip
We conclude this introduction by discussing consequences of the above results for a cousin model, the percolation problem for excursion sets of the (discrete) Gaussian free field on $G$, see for instance \cite{MR914444,MR3053773}. The critical parameter for this model is traditionally denoted $h_*=h_*(d)$ and one knows \cite{MR3053773,MR3339867,DrePreRod} that $h_*(d)>0$ for all $d \geq 3$, which reflects the fact that it is harder to percolate in the above model. Indeed, $\mathcal{K}^a \cap G$ is a subset of the corresponding discrete excursion cluster of $0$ (above level $a$), hence all lower bounds derived above for $a>0$ (for which the cluster size observable requires no truncation, thus being monotone) remain valid for discrete excursion sets. In particular, picking $a>h_*$, the lower bound in \eqref{eq:offcritvol-n} yields a matching bound for the upper bound recently derived in \cite[Corollary 1.4]{zbMATH07605989}.  In fact, the lower bound of \eqref{eq:offcritvol-n} holds in much greater generality, see \eqref{eq:offcritvol-n} below, and remains pertinent on a large class of graphs $\G$ for which one knows that $h_*(\mathcal{G})>0$, cf.~\cite{DrePreRod2}. Using the isomorphism with random interlacements, see \cite[Theorem~3]{MR3502602}, one can also prove a similar lower bound for the vacant set $\V^u$ of random interlacements for all $u>0$, and in particular for all subcritical values of $u$ where this lower bound is expected to be sharp up to constants depending on $u$.

\bigskip

Let us describe the organization of the article. Section~\ref{sec:mainresults} presents the setup of general metric graphs that we will consider throughout the article, and in particular introduces our standing assumptions \eqref{eq:ellipticity}, \eqref{eq:intro_sizeball} and \eqref{eq:intro_Green}. We also state our main results in this setup, from which all the results on $\Z^d$ from this introduction can be deduced. 

Section~\ref{sec:pre} contains various preliminary results which will be useful throughout the rest of this article. Their interest is illustrated in the proof of Proposition~\ref{pro:easyoffcriticalbounds}, which corresponds to the off-critical bounds \eqref{eq:offcritvol-n}, and already conveys many of the ideas behind the proofs of our main theorems. 

Section~\ref{sec:volneg} is centered around Proposition~\ref{pro:manyMeso} which controls the volume $\LV_r^{-a}$ of the largest cluster in a ball of size $r$ at level $-a$, for $a>0$ and $r\geq1$, and whose proof is illustrated in Figure~\ref{fig:vol-gff}. This serves as a central ingredient in Section~\ref{sec:den}, together with the entropy bound \eqref{eq:entropy}, where we conclude the proof of our all our main results. Finally, the appendix contains a short proof of the lower bound on $M_r^0$ in the mean-field regime from \eqref{eq:critlargestvol-Zd2meanfield}, generalized to our setup from Section~\ref{sec:mainresults}.

\medskip

We now specify our policy with constants.
Throughout, $c,c',\tilde{c}, \tilde{c}',C,C',\dots$ denote generic positive and finite constants that change from place to place and may depend implicitly on the parameters $\alpha$ and $\nu$ in  \eqref{eq:intro_Green}, \eqref{eq:intro_sizeball}, whenever these conditions are assumed to hold (they also implicitly depend on the specific values of the constants $c_0$, $\Cr{c:alpha-lb},\Cr{c:alpha-ub},  \Cr{c:green-lb}, \Cr{c:green-ub}$ appearing in \eqref{eq:ellipticity}, \eqref{eq:intro_Green}, \eqref{eq:intro_sizeball}, which we assume fixed once and for all). If these constants depend on additional parameters, this will be stated explicitly. Numbered constants $c_0,c_1,c_2,C_0,C_1, C_2,\dots$ are defined upon their first appearances in the text and remain fixed until the end.

\section{Setup and main results}\label{sec:mainresults}

We now introduce the general setup considered in this article, and then state our main result within this framework. We consider a countable weighted graph $\G = (G, \lambda)$. That is, $G$ is a countable set and $\lambda_{x,y}=\lambda_{y,x} \geq 0$ are non-negative symmetric weights. Two points $x,y \in G$ are neighbors if and only if $\lambda_{x,y}>0$. In this case, we refer to the unordered pair of vertices $\{x,y\}$ as an edge, and write $x\sim y$ if and only if $\{x,y\}$ is an edge. We will always assume that the associated graph is locally finite. We assume that $\G$ is transient for the \emph{random walk} on $\G$, which is defined as the continuous time Markov chain $X$ on $G$ with generator
$Lf(x) = \frac{1}{\lambda_x} \sum_{y \in G} \lambda_{x,y} (f(y) - f(x)),$
for suitable $f: G \to \R$, and where $\lambda_x \stackrel{\text{def.}}{=} \sum_{y \sim x} \lambda_{x,y}$. We write $P_x$ for its law when started in $x \in G.$ Its Green function is defined as 
\begin{align} \label{eq:green}
g(x,y) = \frac{1}{\lambda_y}\int_{0}^\infty P_x(X_t =y)\, {\rm  d}t, \quad x,y \in G.
\end{align}
We will work under the following three assumptions on $\G$. The first is a standard ellipticity assumption on the weight function $\lambda$, which requires that for some constant $c_0 \in (0,\infty)$,
\begin{align}
&\label{eq:ellipticity} \tag{$p_0$}
\ \, \lambda_{x,y}/\lambda_x\geq c_0 \text{ for all $x,y \in G$ such that $\lambda_{x,y}>0$}. 
\end{align}
 We think of $\lambda_{\cdot}$ as a measure on $G$ and write $\lambda(A)=\sum_{x\in A}\lambda_x$ for $A\subset G$. The remaining two assumptions on $\G$ depend on the choice of a metric $d(\cdot,\cdot)$ defined on $G$. For many cases of interest, one can afford to simply choose $d= d_{\text{gr}}$, the graph distance on $\mathcal{G}$, i.e.~$d_{\text{gr}}(x,y)=1$ if and only if $\lambda_{x,y}>0$ (extended to a geodesic distance on $G$); we refer to \cite{DrePreRod2} for an extensive discussion of settings which may require different choices of $d.$ We write $B(x,r)=\{ y \in G : d(x,y) \leq r\}$, $x \in G$, $r>0$, for the discrete balls in the distance $d$. For a subset $K\subset G$ we write $|K|$ for its cardinality.

We will require the weighted graph $\G$ to be $\alpha$-Ahlfors regular, i.e.~there exist $\alpha>0$ and $\Cl[c]{c:alpha-lb}, \Cl{c:alpha-ub} \in (0 ,\infty)$ such that the volume growth condition
\begin{equation}
\label{eq:intro_sizeball} \tag{$V_{\alpha}$}
\Cr{c:alpha-lb}r^{\alpha}\leq \lambda(B(x,r))\leq \Cr{c:alpha-ub}r^{\alpha}\quad \text{ for all }x\in{G}\text{ and }r\geq1,
\end{equation}
is satisfied. In particular, \eqref{eq:intro_sizeball} entails that 
the sets $B(x,r)$ have finite cardinality for all $x \in G$ and $r>0$. We furthermore impose that there exist constants $\Cl[c]{c:green-lb}, \Cl{c:green-ub} \in (0,\infty)$ and an exponent $\nu \in (0,\infty)$ such that the Green function $g$ on $G$ (cf.~\eqref{eq:green}) satisfies
\begin{equation}
\label{eq:intro_Green}\tag{$G_{\nu}$}
\begin{split}
& \Cr{c:green-lb}\leq g(x,x)\leq  \Cr{c:green-ub}
\text{ and }  
 \Cr{c:green-lb} d(x,y)^{-\nu}\leq g(x,y)\leq  \Cr{c:green-ub} d(x,y)^{-\nu} \quad \text{ for all }x \neq y\in{G}.
\end{split}
\end{equation}
In particular \eqref{eq:intro_Green} implies that $X$ is transient. The conditions \eqref{eq:ellipticity}, \eqref{eq:intro_sizeball} and \eqref{eq:intro_Green} imply that $0<\nu\leq \alpha-2$ in case $d$ is the graph distance (cf.\ \cite{MR2076770}), and we will from now on always assume that this is the case even for general distances $d$. We further note that as proved in \cite[(2.10)]{DrePreRod2} and \eqref{eq:capBallBd}, these conditions imply that
\begin{equation}
\label{eq:lambdabounded}
\begin{split} 
c\leq \lambda_{x,y}\leq \lambda_x\leq C\text{ for all }x\sim y\in{G}.
\end{split}
\end{equation}

The above setup extends to the metric graph $\tilde{\G}$ associated to $\G$ by which each edge $\{x,y\}$ is replaced by a continuous line segment $I_{\{x,y\}}$ of (Euclidean) length $1/(2\lambda_{x,y})$ glued at its endpoints (which correspond to the vertex set $G$).  We refer to \cite[Section 2]{DrePreRod3} for a comprehensive introduction. In this manner the vertex set $G$ is naturally viewed as a subset of $\tilde{\G}$. In particular $X$ extends to a continuous process, the Brownian motion on $\tilde{\G}$. With a slight abuse of notation, we continue to use $X$, $P_{x}$, $x \in \tilde\G$, and $g(x,y)=g_{\tilde{\G}}(x,y)$, $x,y \in \tilde{\G}$ for the extension of quantities defined above to $\tilde{\G}$. 

In the sequel, we define $K\subset \tilde{\mathcal{G}}$ to be bounded if $K\cap G$ is a bounded (or equivalently, finite) set, and introduce its cardinality as
\begin{equation}\label{eq:vol}
|K|\stackrel{\text{def.}}{=}|K\cap G|,
\end{equation}
which is consistent when $K\subset G (\subset \tilde\G)$. We call a set compact if it is compact for the natural geodesic metric on $\tilde{\G}$ which assigns length one to each edge and interpolates linearly within edges. We further write $d(x,K)=\inf_{y\in{K\cap G}}d(y,x)$ for any $K\subset\tilde{\G}$,  with the convention $d(x,K)=\infty$ if $K\cap G=\emptyset$, and call denote by $\tilde{B}(x,r)$ the union of $B(x,r)$ and the set of points in $\tilde{\G}$ which belong to a segment whose endpoints are both contained in the discrete ball $B(x,r)$. 

The Gaussian free field on $\tilde{\G}$ is the continuous centered Gaussian process $(\varphi_x)_{x\in \tilde\G}$ with law $\P$ and corresponding expectation $ 
\E$ such that
\begin{equation}\label{eq:introGFF}
\E[\varphi_x \varphi_y] = g(x,y), \quad \text{ for all }x,y \in \tilde\G.
\end{equation}
Its restriction to $G$ is the discrete Gaussian free field associated to $\G=(G,\lambda)$. We now introduce the percolation problem we are interested in. To this end, we consider
\begin{equation}
\label{eq:intro0}
0, \text{ an arbitrary point in } \G
\end{equation}
and 
\begin{equation}
\label{eq:introKa}
\begin{split}
&{\mathcal{K}}^a \stackrel{\text{def.}}{=} \text{ the connected component of $0$ in $\{ x \in \widetilde{\mathcal{G}}:  \varphi_x \geq a \}$}
\end{split}
\end{equation}
(with ${\mathcal{K}}^a=\emptyset$ if $\varphi_0 <a$). We also introduce for $r\geq1$
\begin{equation}
\label{eq:defVra}
\begin{split} 
\LV_r^a=\sup\big\{|K|:\,K\text{ is a connected component of }\{ x \in \tilde{B}(r):  \varphi_x \geq a \}\big\}.
\end{split}
\end{equation}
Note that in case $\lambda\equiv1$ (which is for instance the case on $\Z^d$), the set of edges entirely included in $\{x\in{\tilde{\G}}:\,\phi_x\geq a\}$ has exactly the same law as the set of open edges described around \eqref{eq:disrete_model}, see \cite[(1.6)]{DrePreRod5} and above. In particular, $|\K^a|$ and $M_r^a$ have the same law as the corresponding sets introduced above Theorem~\ref{T:critvol-Zd}. The set $\K^a$ from the introduction actually corresponds to the set $\K^a\cap \Z^d$ from \eqref{eq:introKa}, but this slight abuse of notation will be of no consequence since we focus on $|\K^a|$ ($=|\K^a\cap \Z^d|$) in the introduction, and will from now on only consider $\K^a\subset\tilde{\G}$ as defined in \eqref{eq:introKa}.

In this article, we are concerned with the
behavior of the quantities $|\mathcal{K}^a|$ and $\LV_r^a$, which takes on values in $\{0,1,\dots\} \cup \{ \infty\}$, as $a\in \R$ varies. Consider the associated order parameter ${\theta}_0(a) \stackrel{\text{def.}}{=}\P({\mathcal{K}}^a \text{ is bounded})$, $a \in \R.$ It turns out that, under the assumption \eqref{eq:intro_Green}, the value $a=0$ is critical in the sense that
\begin{equation} \label{T1_sign}
{\theta}_0(a) =1 \text{ if and only if }  a \geq 0;
\end{equation}
see \cite[Theorem 1.1 and Lemma 3.4(2)]{DrePreRod3}. In fact \eqref{T1_sign} holds under much weaker assumptions on $\G$. In view of \eqref{T1_sign}, the regime $a> 0$ will be referred to as \emph{subcritical} and the regime $a< 0$ as \emph{supercritical}. 

The behavior of the volume $|\mathcal{K}^a|$ that we aim to investigate is intimately linked to that of the radius of $\mathcal{K}^a$ (see for instance \eqref{eq:qPsiBd} and \eqref{eq:critvol-ub} below). It will be convenient to parametrize the latter in terms of a function $q(\cdot)$, as follows. With hopefully obvious notation, we write $\{U \stackrel{\geq a}{\longleftrightarrow} V\}$, for $U,V \subset \tilde\G$, in the sequel to denote the event that $U$ and $V$ are connected by a (continuous) path in $\{ \varphi \geq a\} = \{x \in \tilde\G: \varphi_x \geq a\}$. For $B \subset \tilde{\G}$ we write $\partial B$ to denote the topological boundary of $B$. We write $q: [1,\infty)\to  (0,\infty]$ for the function defined as
\begin{equation}
    \label{eq:qPsiBd}
q(r)\stackrel{\text{def.}}{=}\sup_{x \in G} r^{\nu/2} \P \big( x \stackrel{\geq 0}{\longleftrightarrow} \partial \widetilde{B}(x,r) \big), \ r \geq 1.
\end{equation}

\begin{Rk}[The function $q$] \label{R:1-arm}
Let us briefly comment on the choice of parametrization in \eqref{eq:qPsiBd}. It turns out to be particularly well-suited in \lq low dimensions,\rq\ in which $q$ is typically of unit order uniformly in $r$ under appropriate assumptions on $\G$, as we now detail. First, on any graph satisfying \eqref{eq:ellipticity} and \eqref{eq:intro_Green} for some $\nu > 0$, by \cite[(1.22) and (1.23)]{DrePreRod5} one knows that
\begin{equation}
\label{eq:boundq}
    c\leq q(r)\leq Cr^{\frac{(\nu-1)\vee 0}2}\log(r)^{\frac{1\{\nu=1\}}2} \ \text{ for all }r\geq1.
\end{equation}
In particular, \eqref{eq:boundq} already yields that $q(r) \leq C$ for all $r \geq 1$ when $\nu <1$. Note that \eqref{eq:boundq} does not even rely on a notion of volume growth or dimension for $\G$, but just on the decay of the Green function.  Under the additional hypothesis that \eqref{eq:intro_sizeball} holds, a sharp upper bound matching (up to constants) the lower bound in \eqref{eq:boundq} was recently shown in a series of works \cite{DrePreRod8,DrePreRod9,cai2024onearm} in `low dimensions,' as we detail below. In summary, these results entail that, for $\G$ satisfying \eqref{eq:ellipticity}, \eqref{eq:intro_Green} and \eqref{eq:intro_sizeball}, 
\begin{equation}\label{eq:boundq-new}
\sup q \equiv \sup_{r \geq 1} q(r)< \infty, \quad \text{ when }\alpha > 2\nu \text{ and on } \Z^\alpha, \alpha=3,4,5
\end{equation}
(with unit weights between nearest neighbors in the case of $\Z^{\alpha}$, for which $\nu=\alpha-2$). For the reader's orientation, the bound \eqref{eq:boundq-new} was established in \cite{DrePreRod8,DrePreRod9} when $\alpha > 2\nu$, which comprises the case of $\Z^\alpha$, $\alpha=3$ with unit weights;  the ref.~\cite{DrePreRod8} also contains a bound in the case $\alpha = 2\nu$ (comprising $\Z^4$) by which $q(r)\leq (\log r)^C$ for all $r\geq1$. 
The case of $\Z^{\alpha}$, $\alpha=3,4,5$ was then obtained by different methods in \cite{cai2024onearm}. It was further shown in \cite{cai2024onearm} that $q(r)= 1 \vee \exp\{ \sqrt{\log r} \log \log r \}$ on $\Z^6$. On the other hand, on square lattices with dimension larger than or equal to $7$, $q(r)$ is of order $r^{\frac{\nu}2-2}$ by \cite{cai2023onearm},  reflecting mean-field behavior.
\end{Rk}

In order to put our results below into context, we also observe that by repeating the proof of \cite[Corollary 1.6]{DrePreRod5}, but substituting \eqref{eq:qPsiBd} for the upper bounds \cite[(1.22)--(1.23)]{DrePreRod5} used therein, one obtains that if $\G$ satisfies \eqref{eq:ellipticity}, \eqref{eq:intro_sizeball} and \eqref{eq:intro_Green}, then 
\begin{equation}
\label{eq:critvol-ub}
       \P(|\mathcal{K}^0| \geq n) \leq  C q\big(n^{\frac2{2\alpha-\nu}}\big)n^{-\frac{\nu}{2\alpha - \nu}}, \text{ for all $n \geq 1$.}
\end{equation}

We are now ready to state the first main result of this article, which concerns the volume $|\K^a|$ of a given cluster. Recall our convention regarding constants from the end of Section~\ref{sec:intro}.

\begin{The} \label{thm:volLB}
Assume $\G$ satisfies \eqref{eq:ellipticity}, \eqref{eq:intro_sizeball} and \eqref{eq:intro_Green} for some $\alpha,\nu >0$, and that $\sup q < \infty$ (cf.~\eqref{eq:qPsiBd} and Remark~\ref{R:1-arm}). There exist $c,C\in (0,\infty)$ such that for any  $a \in [-1,1] $ and $n \ge 1$,
    \begin{equation} \label{eq:tailEstVolgen}
\P\big( n \leq |\mathcal K^a| <\infty \big) \ge
 c n^{-\frac{\nu}{2\alpha - \nu}} \exp \big\{ - C |a|^{2-\frac{\nu}{\alpha}}n^\frac{\nu}{\alpha} \big\}.
\end{equation}
\end{The}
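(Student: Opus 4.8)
\textbf{Proof strategy for Theorem~\ref{thm:volLB}.}

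The plan is to realize the heuristic around \eqref{eq:heur1}--\eqref{eq:heur2}: for a target volume $n$ and level $a$, pick the scale $r = r(a,n)$ balancing the entropic cost of shifting the field against the desire to see a large cluster, and then produce, with the claimed probability, a cluster of $0$ at level $a$ of volume at least $n$ and finite. Concretely, one sets $b = b(a,n)>0$ (with $b \asymp a$ when $a<0$, and $b$ chosen as a small negative-shift parameter even when $a\geq 0$) and $r \asymp (n/b)^{1/\alpha}$ so that $\theta(b) r^\alpha \asymp n$; when $a=0$ or $|a|$ is very small relative to $n$, the relevant scale instead saturates at the critical scale $\xi(b) \asymp b^{-1/\nu}$ and one tunes $b$ by $n \asymp b\cdot b^{-\alpha/\nu} = b^{1-\alpha/\nu}$, i.e.\ $b \asymp n^{-\nu/(\alpha-\nu)}$, which reproduces the prefactor $n^{-\nu/(2\alpha-\nu)}$ after also paying the capacity blow-up cost (see below). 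The crossover between the two choices is exactly where the exponential $\exp\{-C|a|^{2-\nu/\alpha} n^{\nu/\alpha}\}$ is of unit order, so a single formula covers all $(a,n)$.

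The key steps, in order, would be: \textbf{(1)} Use the Cameron--Martin / entropy bound \eqref{eq:entropy} to reduce the event $\{n \le |\mathcal{K}^a| < \infty\}$ at level $a$ to the analogous event at level $-b$ (for a suitable $b>0$) inside the box $\tilde B(r)$, at a multiplicative cost $\exp\{-c b^2 \mathrm{cap}(\tilde B(r))\} \asymp \exp\{-c b^2 r^{\alpha-2}\}$, which with the above choice of $r$ is of order $\exp\{-C|a|^{2-\nu/\alpha} n^{\nu/\alpha}\}$; the truncation $|\mathcal{K}^a|<\infty$ has to be tracked and handled via the fact that shifting down by $b$ and confining the relevant geometry to a box keeps the event local. \textbf{(2)} Invoke the central Proposition~\ref{pro:manyMeso} to get that, with probability bounded below by a constant, the largest cluster at level $-b$ in $\tilde B(r)$ has both volume $\asymp n$ \emph{and} capacity of order $r^{\alpha-2}$ — this is where the signed isomorphism \eqref{eq:isom} with random interlacements at intensity $u \asymp b^2$ and the one-arm input $\sup q < \infty$ (via Lemma~\ref{L:cap-LB}, relying on \cite{cai2024onearm}) enter, exactly as in Figure~\ref{fig:vol-gff}. \textbf{(3)} Produce, independently, the event that the cluster of $0$ at level $-b$ in $\tilde B(\xi)$ with $\xi \asymp b^{-1/\nu}$ has capacity of order $\xi^{\nu} = \xi^{\alpha-2}$ (using $\nu \le \alpha -2$ and, in the borderline case, $\nu = \alpha-2$); by Lemma~\ref{lem:capbox} this transfers the critical-capacity tail \eqref{eq:cap-tail} to level $-b$ and costs a factor of order $\xi^{-\nu/2} \asymp b^{1/2 \cdot \nu/\nu}$, which after bookkeeping is precisely the extra $n^{-\nu/(2\alpha-\nu)}$ (relative to $n^{-\nu/\alpha}$) in \eqref{eq:tailEstVolgen}. \textbf{(4)} Connect the ``blown-up'' cluster of $0$ to the ambient large cluster of step (2): because both have capacity of order $r^{\alpha-2}$ inside comparable boxes, an FKG/conditioning argument (plus a further interlacement trajectory at intensity $\asymp b^2$, absorbed in the already-paid entropic cost) shows this connection happens with probability bounded below uniformly in $a$. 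Combining the independent/FKG-correlated events from (1)--(4) yields \eqref{eq:tailEstVolgen}.

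The main obstacle is step (2), i.e.\ Proposition~\ref{pro:manyMeso}: one needs the \emph{simultaneous} occurrence of large volume and large capacity for a \emph{single} cluster among the possibly many mesoscopic components of the super-level set at scale $r$, even when $r$ is of the same order as the critical scale $\xi$ (so there is no comfortable subcriticality to exploit). The mechanism is to fix a ``hub'' — the critical sign cluster of largest capacity in $B(r)$, which has capacity $\asymp r^{\alpha-2}$ by Lemma~\ref{L:cap-LB} — then attach all interlacement trajectories hitting it and all sign clusters hitting those, using the isomorphism \eqref{eq:isom} to reinterpret the union as part of one cluster at level $-b$; the volume lower bound then follows from a second-moment / concentration estimate on how much sign-cluster volume an interlacement at level $u \asymp b^2$ collects in $B(r)$, and controlling the discarded trajectories (cut upon exiting a slightly larger box) to keep the event local. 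A secondary technical point is the careful matching of all the polynomial bookkeeping in steps (1) and (3) so that the prefactor comes out exactly $n^{-\nu/(2\alpha-\nu)}$ and the exponent exactly $|a|^{2-\nu/\alpha} n^{\nu/\alpha}$, including verifying that the two regimes for $r$ glue at the stated crossover; this is routine but must be done with care, in particular near the borderline $\nu = \alpha - 2$.
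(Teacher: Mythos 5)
Your overall route coincides with the paper's: a change of measure via the entropy bound \eqref{eq:entropy} at shift of order $a$, the central Proposition~\ref{pro:manyMeso} to produce a \emph{single} cluster at level $\approx -a$ in a box of radius $r\asymp (n/|a|)^{1/\alpha}$ having simultaneously volume $\gtrsim n$ and capacity $\gtrsim r^\nu$, a pre-paid critical-capacity cost for the cluster of $0$ in $\tilde B(\xi)$ (Lemma~\ref{lem:capbox}), which both yields the polynomial prefactor and guarantees that the connection probability appearing in the denominator inside the exponential of \eqref{eq:entropy} is bounded below uniformly in $a,n$, and finally a connection through an interlacement trajectory; this is exactly the mechanism of \eqref{eq:LBcond0}, \eqref{eq:clustCardLB} and Lemma~\ref{lem:boundParF}. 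Two points, however, are genuinely problematic as you wrote them.

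First, the truncation $|\mathcal K^a|<\infty$ for $a<0$ is not handled by ``keeping the event local'': at a supercritical level the large cluster you construct is liable to be part of the unbounded cluster, and locality gives no finiteness. The paper disposes of this at the outset by the symmetry (a consequence of the isomorphism, cf.\ the reduction around \eqref{eq:apos}) that $|\mathcal K^{-a}|\cdot 1\{|\mathcal K^{-a}|<\infty\}$ and $|\mathcal K^{a}|\cdot 1\{|\mathcal K^{a}|<\infty\}$ have the same law, so that one only proves the bound for $a\in[0,1]$, where the truncation holds almost surely; without this reduction (or an explicit, and costly, disconnection argument) your step (1) does not go through for $a<0$. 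Second, the quantitative bookkeeping as set up does not produce the claimed exponents in the general framework: under \eqref{eq:intro_sizeball}--\eqref{eq:intro_Green} one has $\mathrm{cap}(B(r))\asymp r^{\nu}$ (see \eqref{eq:capBallBd}), not $r^{\alpha-2}$ (these agree only when $\nu=\alpha-2$, e.g.\ on $\Z^d$), and the critical scale is $\xi(b)\asymp b^{-2/\nu}$ (from $b^2\xi^\nu\asymp 1$), not $b^{-1/\nu}$. With your exponents the tuning $n\asymp b\,\xi^{\alpha}$ gives $b\asymp n^{-\nu/(\alpha-\nu)}$ and a capacity blow-up cost $\xi^{-\nu/2}\asymp b^{1/2}\asymp n^{-\nu/(2(\alpha-\nu))}$, which is not the stated prefactor $n^{-\nu/(2\alpha-\nu)}$; with the corrected scales the entropic cost is $\exp\{-Cb^2r^\nu\}\asymp\exp\{-C|a|^{2-\nu/\alpha}n^{\nu/\alpha}\}$ and the blow-up cost $\xi^{-\nu/2}\asymp b\asymp n^{-\nu/(2\alpha-\nu)}$, as in the paper. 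A related (fixable) imprecision: the cluster of $0$ must ultimately lie in $\{\varphi\geq a\}$, so the capacity requirement is imposed at level $2a$ (the paper's $\mathcal K^{2a}_{\xi}$), not at level $-b$; the downward shift is only applied away from $\tilde B(\xi)$ through the tilted function $a\tilde{\mathbf h}_K$ in Lemma~\ref{lem:boundParF}.
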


Let us now record an interesting direct consequence of \eqref{eq:tailEstVolgen} for the moments of $|\K^a|$, which follows from the above in combination with the previously established law of the capacity, cf.~\eqref{eq:capTail} below. 

\begin{Cor}
\label{C:offcritvol-mom}
   Assume $\G$ satisfies \eqref{eq:ellipticity}, \eqref{eq:intro_sizeball} and \eqref{eq:intro_Green} for some $\alpha,\nu >0$, then for each $k\geq 1$ 
    there exists $c=c(k) \in (0,\infty)$ such that for all $a \in [-1,1] \setminus \{0\}$, 
    \begin{align} \label{eq:offcritvol-mom1}
    \E\big[|\mathcal K^{a} |^k\cdot 1\{|\mathcal{K}^a|<\infty\}\big]  \ge c |a|^{1-2k},
    \end{align}
    and if in addition, $\sup q < \infty$, then
    \begin{align} \label{eq:offcritvol-mom2}
    \E\big[|\mathcal K^{a} |^k\cdot 1\{|\mathcal{K}^a|<\infty\}\big]  \ge c|a|^{1-k\frac{2\alpha-\nu}{\nu}}.
    \end{align}
\end{Cor}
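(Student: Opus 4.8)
The plan is to derive both bounds from tail estimates through the moment layer-cake identity
$\E\big[|\mathcal K^a|^k 1\{|\mathcal K^a|<\infty\}\big]\asymp \sum_{n\geq1}n^{k-1}\,\P\big(n\leq |\mathcal K^a|<\infty\big)$ (with $k$-dependent implicit constants, coming from $m^k\asymp\sum_{n=1}^{m}n^{k-1}$), using Theorem~\ref{thm:volLB} for \eqref{eq:offcritvol-mom2} and the law of the cluster capacity for \eqref{eq:offcritvol-mom1}.

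For \eqref{eq:offcritvol-mom2}, assume $\sup q<\infty$. The layer-cake lower bound combined with \eqref{eq:tailEstVolgen} gives
\[
\E\big[|\mathcal K^a|^k 1\{|\mathcal K^a|<\infty\}\big]\ \geq\ c\sum_{n\geq1}n^{k-1-\frac{\nu}{2\alpha-\nu}}\exp\Big\{-C|a|^{2-\frac{\nu}{\alpha}}n^{\frac{\nu}{\alpha}}\Big\}.
\]
A routine comparison of this sum with the corresponding integral $\int_1^\infty x^{k-1-\frac{\nu}{2\alpha-\nu}}\exp\{-C|a|^{2-\frac{\nu}{\alpha}}x^{\frac{\nu}{\alpha}}\}\,dx$ (the summand is decreasing or, for larger $k$, unimodal with its maximum at $n\asymp|a|^{-(2\alpha-\nu)/\nu}\gg1$ when $|a|\ll1$; for $|a|$ of unit order the asserted bound is trivial), followed by the substitution $y=C|a|^{2-\frac{\nu}{\alpha}}x^{\frac{\nu}{\alpha}}$, evaluates the right-hand side, up to a $k$-dependent constant, as $|a|^{-\frac{2\alpha-\nu}{\nu}\left(k-\frac{\nu}{2\alpha-\nu}\right)}=|a|^{1-k\frac{2\alpha-\nu}{\nu}}$, which is \eqref{eq:offcritvol-mom2}.

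For \eqref{eq:offcritvol-mom1}, I would instead pass through the capacity: by sub-additivity \eqref{eq:subadd} one has $\mathrm{cap}(\mathcal K^a)\leq C|\mathcal K^a|$, hence
\[
\E\big[|\mathcal K^a|^k 1\{|\mathcal K^a|<\infty\}\big]\ \geq\ c\,\E\big[\mathrm{cap}(\mathcal K^a)^k 1\{|\mathcal K^a|<\infty\}\big]\ =\ c\int_0^\infty k\,t^{k-1}\,\P\big(\mathrm{cap}(\mathcal K^a)>t,\ |\mathcal K^a|<\infty\big)\,dt.
\]
Feeding in the law of the cluster capacity \eqref{eq:capTail}, which for $a\in[-1,1]\setminus\{0\}$ yields a lower bound of the form $\P(\mathrm{cap}(\mathcal K^a)>t,\ |\mathcal K^a|<\infty)\geq c\,t^{-1/2}e^{-Ca^2t}$ for $t\geq1$, and observing that for $k\geq1$ the integral $\int_1^\infty t^{k-3/2}e^{-Ca^2t}\,dt$ is governed by its upper range $t\asymp(Ca^2)^{-1}$ and is therefore $\asymp(a^2)^{1/2-k}=|a|^{1-2k}$, one obtains \eqref{eq:offcritvol-mom1}.

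The only genuinely non-routine ingredient is the uniform-in-$a$ capacity lower bound $\P(\mathrm{cap}(\mathcal K^a)>t,\ |\mathcal K^a|<\infty)\geq c\,t^{-1/2}e^{-Ca^2t}$, i.e.\ the content one extracts from \eqref{eq:capTail}: one needs both the correct Gaussian modulation $e^{-Ca^2 t}$ and the truncation to $\{|\mathcal K^a|<\infty\}$. Should \eqref{eq:capTail} only be available at the critical level $a=0$ (where it is a cleaned-up version of \eqref{eq:cap-tail}), one would supplement it by a Cameron--Martin shift of the field by $|a|$ over a ball of capacity $\asymp t$, at the entropic cost $e^{-Ca^2t}$ furnished by \eqref{eq:entropy}, exactly as in the proofs of the main theorems. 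Everything else---the layer-cake identities, the sum-versus-integral comparison for a unimodal summand, and the substitutions identifying the exponents of $|a|$---is bookkeeping.
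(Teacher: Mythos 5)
Your proposal is correct and follows essentially the same route as the paper: the layer-cake representation fed with \eqref{eq:tailEstVolgen} and a change of variables for \eqref{eq:offcritvol-mom2}, and the comparison $\mathrm{cap}(\mathcal K^a)\leq C|\mathcal K^a|$ from \eqref{eq:subadd} together with the capacity tail \eqref{eq:capTail} (which indeed holds for all $a$, so no Cameron--Martin shift is needed) for \eqref{eq:offcritvol-mom1}. The only differences are cosmetic (sum versus integral layer-cake, bounding the moment of the capacity directly rather than at the tail level), plus the minor point, handled in the paper by citing that finite capacity forces finite volume, that the event $\{t\leq\mathrm{cap}(\mathcal K^a)<\infty\}$ is contained in $\{|\mathcal K^a|<\infty\}$.
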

The proofs of Theorem~\ref{thm:volLB} and Corollary~\ref{C:offcritvol-mom} appear in Section~\ref{sec:den}.
We believe that \eqref{eq:offcritvol-mom1} is optimal up to constants in the mean-field regime $2\alpha< 3\nu$, whereas \eqref{eq:offcritvol-mom2} is optimal up to constants when $2\alpha> 3\nu$.  In the upper-critical dimensions $2\alpha=3\nu$, the two bounds coincide and are presumably correct up to logarithmic corrections. Indeed, if an upper bound matching \eqref{eq:offcritvol-mom2} is satisfied, then the critical exponent $\Delta$, such that $\E\big[|\mathcal K^{a} |^{k+1}\cdot 1\{|\mathcal{K}^a|<\infty\}\big]=a^{-\Delta+o(1)}\E\big[|\mathcal K^{a} |^k\cdot 1\{|\mathcal{K}^a|<\infty\}\big]$ for each $k\geq1$, would be equal to $\frac{2\alpha}{\nu}-1$ outside of the mean-field regime (in particular, it would not depend on $k$), matching the prediction from \cite[Table~1]{DrePreRod5}. Since \eqref{eq:tailEstVolgen} is a simple consequence of \eqref{eq:offcritvol-mom2}, this justifies why we also believe \eqref{eq:tailEstVolgen} to be optimal up to constants when $2\alpha>3\nu$. In the mean-field regime, an upper-bound matching \eqref{eq:offcritvol-mom1} would imply that $\Delta=2$, which would be the same exponent as for Bernoulli percolation in the mean-field regime, see \cite[(10)]{MR0923855}. It is a very interesting open question to prove a matching upper bound to \eqref{eq:offcritvol-mom2} outside of the mean-field regime, or even more generally to \eqref{eq:tailEstVolgen}. We refer to Proposition~\ref{pro:easyoffcriticalbounds} for a partial result in that direction, which shows that the probability in \eqref{eq:tailEstVolgen} decays at most as $\exp\{-Ca^2n^{\frac\nu\alpha}\}$.

We now turn to our results concerning the volume of the largest cluster $\LV_r^a$ in a ball. Similarly as in \eqref{eq:critvol-ub}, we first focus on upper bounds at criticality. As indicated above \cite[Conjecture~A]{werner2020clusters}, one can deduce a bound on the average value of $\LV_r^0$ from \cite[Proposition~5.2]{MR3502602}, which we formally prove in the following proposition.
\begin{Prop}
\label{pro:upperMRa}
Assume $\G$ satisfies \eqref{eq:ellipticity}, \eqref{eq:intro_sizeball} and \eqref{eq:intro_Green} for some $\alpha,\nu >0$. There exists $C\in{(0,\infty)}$ such that for all $r\geq1$, 
\begin{equation}
\label{eq:upperMR0}
\begin{split} 
\E\big[\LV^{0}_r\big]  \le Cr^{\alpha-\frac{\nu}2},
\end{split}
\end{equation}
and thus $\P(M_r^0\leq tr^{\alpha-\nu/2})\geq 1-Ct^{-1}$ for all $t>0$.
\end{Prop}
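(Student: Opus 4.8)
The plan is to control the \emph{second} moment of $\LV_r^0$ by a sum of two-point connection probabilities and then conclude by the Cauchy--Schwarz inequality. Writing $\LV_r^0=\max_K|K|$, where $K$ ranges over the connected components of $\{x\in\tilde B(r):\varphi_x\geq0\}$ and recalling that $|K|=|K\cap G|$, one has the deterministic bound
\[
(\LV_r^0)^2\leq\sum_K|K|^2=\sum_{x,y\in B(r)}\mathbf{1}\big\{x\text{ and }y\text{ lie in a common component of }\{z\in\tilde B(r):\varphi_z\geq0\}\big\}.
\]
Taking expectations, and noting that the event in the last indicator is contained in $\{x\stackrel{\geq0}{\longleftrightarrow}y\}$ (restricting the connecting path to lie in $\tilde B(r)$ only decreases the probability), we obtain $\E[(\LV_r^0)^2]\leq\sum_{x,y\in B(r)}\P(x\stackrel{\geq0}{\longleftrightarrow}y)$.

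The next step feeds in the two-point estimate. Here I would invoke \cite[Proposition~5.2]{MR3502602}, which expresses the probability that $x$ and $y$ belong to the same sign cluster of $\varphi$ on $\tilde{\G}$ as an $\arcsin$ of the normalized Green function; combined with $\arcsin(t)\leq\frac\pi2 t$ for $t\in[0,1]$ and the uniform lower bound on $g(x,x)$ in \eqref{eq:intro_Green}, this gives $\P(x\stackrel{\geq0}{\longleftrightarrow}y)\leq Cg(x,y)$, hence, again by \eqref{eq:intro_Green}, $\P(x\stackrel{\geq0}{\longleftrightarrow}y)\leq C\min(1,d(x,y)^{-\nu})$ for all $x,y\in G$. (Minor care is needed with $\{\varphi\geq0\}$- versus $\{\varphi>0\}$-clusters, but since $\varphi_x,\varphi_y\neq0$ almost surely this is harmless and is dealt with as in \cite{MR3502602}.) One then carries out the routine dyadic estimate: for fixed $x\in B(r)$, partitioning $B(r)$ according to the value of $\lfloor\log_2 d(x,y)\rfloor$ and converting $\lambda$-volume into cardinality via \eqref{eq:lambdabounded}, the bound \eqref{eq:intro_sizeball} yields
\[
\sum_{y\in B(r)}\min\big(1,d(x,y)^{-\nu}\big)\leq C+C\!\!\sum_{0\leq k\leq\log_2(2r)}\!\!2^{k(\alpha-\nu)}\leq Cr^{\alpha-\nu},
\]
the last inequality using $\alpha-\nu\geq2>0$, so the geometric sum is dominated by its largest term. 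Together with $|B(r)|\leq Cr^\alpha$ this gives $\E[(\LV_r^0)^2]\leq Cr^{2\alpha-\nu}$, and the Cauchy--Schwarz inequality then yields $\E[\LV_r^0]\leq(\E[(\LV_r^0)^2])^{1/2}\leq Cr^{\alpha-\nu/2}$, which is \eqref{eq:upperMR0}. The final assertion follows from \eqref{eq:upperMR0} and Markov's inequality, since $\P(\LV_r^0>tr^{\alpha-\nu/2})\leq t^{-1}r^{-(\alpha-\nu/2)}\E[\LV_r^0]\leq C/t$.

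The argument is almost entirely bookkeeping; the only genuinely non-elementary ingredient is the two-point bound $\P(x\stackrel{\geq0}{\longleftrightarrow}y)\leq Cg(x,y)$ borrowed from \cite[Proposition~5.2]{MR3502602}, and the one point requiring minor care is the passage from $\{\varphi>0\}$ to $\{\varphi\geq0\}$ sign clusters. I do not anticipate a substantial obstacle.
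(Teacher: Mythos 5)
Your proof is correct and is essentially the paper's own argument: the paper's bound $(\LV_r^0)^2\leq\sum_{x\in B(r)}|\K_r^0(x)|$ is exactly your $\sum_K|K|^2=\sum_{x,y}\mathbf 1\{x\leftrightarrow y\}$, and both proofs then use Cauchy--Schwarz, the two-point estimate of \cite[Proposition~5.2]{MR3502602} with \eqref{eq:intro_Green}, the dyadic volume summation via \eqref{eq:intro_sizeball} and \eqref{eq:lambdabounded}, and Markov's inequality. The only (inessential) difference is bookkeeping: you bound $\E[(\LV_r^0)^2]$ directly by the double sum of two-point probabilities, while the paper routes the same estimate through $\sup_x\E[|\K^0_{\tilde B(x,2r)}(x)|]$.
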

\begin{proof}
For each $x\in{B(r)}$, let us denote by $\K_r^0(x)$ the cluster of $x$ in $\{y\in{\tilde{B}(r)}:\,\phi_y\geq0\}$. Note that by \eqref{eq:defVra} we have
\begin{equation*}
\begin{split} 
(\LV_r^0)^2=\sup_{x\in{B(r)}}|\K_r^0(x)|^2\leq \sum_{x\in{B(r)}}|\K_r^0(x)|
\end{split}
\end{equation*}
since in the above sum each vertex of $\K_r^0(x)$ is counted $|\K_r^0(x)|$ times for each $x\in{B(r)}$. Hence, combining this with the Cauchy-Schwarz inequality yields
\begin{equation}
\label{eq:upperboundVr0}
\begin{split} 
\E\big[\LV_r^0\big]\leq \E\Big[\sum_{x\in{B(r)}}|\K_r^0(x)|\Big]^{\frac12}\leq Cr^{\frac\alpha2}\sup_{x\in{G}}\E\big[|\K^0_{\tilde{B}(x,2r)}(x)|\big]^{\frac12},
\end{split}
\end{equation}
where we used \eqref{eq:intro_sizeball}, \eqref{eq:lambdabounded} and the inclusion $\tilde{B}(r)\subset\tilde{B}(x,2r)$ for all $x\in{B(r)}$ in the last inequality. Moreover, using \eqref{eq:intro_Green} and \cite[Proposition~5.2]{MR3502602}, which is stated for the loop soup but is also valid for the metric graph Gaussian free field by \cite[Theorem~1]{MR3502602}, we have for all $x\in{G}$ that
\begin{equation}
\label{eq:averagecriticalsize}
\begin{split} 
\E\big[|\K^0_{\tilde{B}(x,2r)}(x)|\big]\leq C\sum_{y\in{B(x,2r)}}d(x,y)^{-\nu}\leq C\sum_{k=0}^{\log_2(2r)}2^{k\alpha-k\nu}\leq Cr^{\alpha-\nu};
\end{split}
\end{equation}
here we used \eqref{eq:intro_sizeball} and \eqref{eq:lambdabounded} in the second inequality after decomposing $B(x,2r)$ in dyadic scales. Combining \eqref{eq:upperboundVr0} and \eqref{eq:averagecriticalsize} yields \eqref{eq:upperMR0}, and hence $\P(M_r^0\geq tr^{\alpha-\nu/2})\leq Ct^{-1}$ by Markov's inequality.
\end{proof}

We now turn to our second main result, which is a lower bound on the size of the largest cluster $\LV_r^a$.

\begin{The}
\label{the:mainMra}
Under the assumptions of Theorem~\ref{thm:volLB}, there exists $c,C\in{(0,\infty)}$ such that for all $a\in{[-1,1]}$ and $r\geq 1$ we have
    \begin{equation} \label{eq:tailEstlargestVolgen}
\P\big( \LV^a_r \geq c(|a|\vee r^{-\frac\nu2})r^{\alpha} \big) \ge
c \exp \big\{ - C (a\vee0)^{2}r^\nu \big\}.
\end{equation}
Moreover, for all  $t,r\geq 1$ with $r^{\alpha-\frac\nu2}\geq Ct$,
    \begin{equation} \label{eq:tailEstlargestVolgena=0}
\P\big( \LV^0_r \geq (1/t)r^{\alpha-\frac\nu2} \big) \ge
 1-\exp(-ct^{C}).
\end{equation}
\end{The}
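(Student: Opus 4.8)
\textbf{Proof strategy for Theorem~\ref{the:mainMra}.} The plan is to first establish \eqref{eq:tailEstlargestVolgen}, which is the core statement, and then deduce the high-probability lower bound \eqref{eq:tailEstlargestVolgena=0} at $a=0$ as a consequence via an independent-box amplification argument. For \eqref{eq:tailEstlargestVolgen} I would follow the construction depicted in Figure~\ref{fig:vol-gff}, which will be carried out in detail in the central Proposition~\ref{pro:manyMeso} concerning $\LV_r^{-a}$ at supercritical level $-a<0$. The first step is purely deterministic bookkeeping: when $a\le 0$ one directly invokes Proposition~\ref{pro:manyMeso} at level $-|a|$; when $a\ge 0$ one instead uses the entropy (Cameron--Martin) bound \eqref{eq:entropy} to shift the field down by $-|a|$ inside a slightly enlarged box, paying the cost $\exp\{-C a^2\mathrm{cap}(\tilde B(r))\}\ge \exp\{-Ca^2 r^\nu\}$ (using \eqref{eq:capBallBd}), after which one is again in the situation of a large supercritical cluster at level $-|a|$, and the monotonicity of $\LV^{\cdot}_r$ together with locality (the event lives in $\tilde B(cr)$) takes care of the rest. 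So the substantive work is: on a graph with $\sup q<\infty$, at level $-b$ with $b=|a|\vee r^{-\nu/2}\le 1$, the largest connected component of $\{\varphi\ge -b\}$ inside $\tilde B(r)$ has cardinality $\ge c\,b\,r^\alpha$ with probability at least $c$.

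The construction itself I would organize as follows. Using the signed isomorphism theorem \eqref{eq:isom} between $\{\varphi\ge -b\}$ (more precisely its sign clusters) and random interlacements at level $u\asymp b^2$, one selects inside $B(r)$ the critical ($a=0$) sign cluster of largest capacity; by the one-arm upper bound $\sup q<\infty$ combined with the lower bound on $\mathrm{cap}$ of a cluster reaching $\partial\tilde B(r)$ — this is exactly the content of Lemma~\ref{L:cap-LB}, leaning on \cite{cai2024onearm} — this \emph{hub} cluster has capacity of order $r^\nu\asymp\mathrm{cap}(\tilde B(r))$ with probability bounded below. One then retains all interlacement trajectories at level $u\asymp b^2$ that hit the hub, stopped upon exiting a box $\tilde B(2r)$ say (to keep locality), together with all critical sign clusters those trajectories meet. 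The isomorphism guarantees that the union of hub, retained trajectories, and attached critical clusters lies in a single connected component of $\{\varphi\ge -b\}$, connected at scale $r$ by construction. It remains to lower bound the volume of this union by $c\,b\,r^\alpha$: by the known expected occupation/capacity estimates for interlacements at level $u\asymp b^2$ inside $B(r)$ (which, since $\mathrm{cap}(\tilde B(r))\asymp r^\nu$ and the hub absorbs a positive density of trajectories, contribute an occupied volume of order $u\,r^\alpha\wedge r^\alpha\asymp b^2 r^\alpha$... one must be careful here) — more precisely, the density of the supercritical cluster at level $-b$ is $\theta(-b)\asymp b$ by \cite[Corollary~1.2]{DrePreRod5}, so a first-moment computation gives expected volume $\asymp b\,r^\alpha$ inside $B(r)$, and a second-moment / concentration argument (or a direct comparison with the event that the hub itself is large) upgrades this to a lower bound $c\,b\,r^\alpha$ with probability $\ge c$. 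The matching choice $b=|a|\vee r^{-\nu/2}$ is dictated by requiring the interlacement level $u\asymp b^2\gtrsim r^{-\nu}=\mathrm{cap}(\tilde B(r))^{-1}$, i.e.\ that at least of order one trajectory enters $B(r)$, which is what makes the probability non-degenerate.

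For \eqref{eq:tailEstlargestVolgena=0}, the plan is standard amplification. Fix $r$ and tile $\tilde B(r)$ by of order $t^{C'}$ disjoint boxes of radius $\rho\asymp r\, t^{-C'/\alpha}$; on each box, \eqref{eq:tailEstlargestVolgen} at $a=0$ gives $\P(\LV^0_\rho\ge c\rho^{\alpha-\nu/2})\ge c$, and these events on the restricted fields in disjoint (slightly shrunk) boxes are made independent up to a harmless error by the Markov/domain-Markov property of the GFF (decomposing $\varphi$ into a harmonic average plus independent pieces, with the harmonic part controlled by a union bound since it has Gaussian tails). A largest cluster in any one sub-box that reaches scale $\rho$ with volume $c\rho^{\alpha-\nu/2}=c\,r^{\alpha-\nu/2}t^{-C'(2\alpha-\nu)/(2\alpha)}$ already witnesses $\LV^0_r\ge (1/t)r^{\alpha-\nu/2}$ provided the exponent of $t$ is chosen so that $C'(2\alpha-\nu)/(2\alpha)\le 1$; choosing $C'$ appropriately, the probability that \emph{all} of the $\asymp t^{C'}$ boxes fail is at most $(1-c)^{t^{C'}}\le\exp(-c\,t^{C})$, which is the claimed bound.

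\textbf{Main obstacle.} I expect the genuinely delicate step to be the volume lower bound in the construction for \eqref{eq:tailEstlargestVolgen}: ensuring that the single cluster built from the capacity-maximizing hub plus attached interlacement trajectories actually contains of order $b\,r^\alpha$ vertices \emph{with probability bounded below}, rather than merely in expectation. The difficulty is that the hub is itself a random set of possibly fluctuating capacity, the number of trajectories hitting it is then a (quenched) Poisson variable whose mean one must control from below uniformly, and the volume picked up from the attached critical clusters requires a second-moment estimate that does not blow up — this is precisely where the hypothesis $\sup q<\infty$ (hence $d\in\{3,4,5\}$) is essential, since it controls both the one-arm probability and, through it, the variance of the total volume. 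Getting all these pieces to fit together quantitatively, uniformly in $r$ and in $b=|a|\vee r^{-\nu/2}$, is the heart of Proposition~\ref{pro:manyMeso}.
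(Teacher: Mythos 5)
For the first claim \eqref{eq:tailEstlargestVolgen} your route is essentially the paper's: invoke Proposition~\ref{pro:manyMeso} (with $K=\emptyset$) at the supercritical level when $a\leq 0$ and $r\gtrsim |a|^{-2/\nu}$, use the entropy bound \eqref{eq:entropy} with $A=B(r)$ and \eqref{eq:capBallBd} to transfer to $a>0$ at cost $\exp\{-Ca^2r^{\nu}\}$ (the denominator in the exponential being harmless because the shifted probability is bounded below by the $a<0$ case), and handle $r\lesssim |a|^{-2/\nu}$ by monotonicity through the effective level $|a|\vee r^{-\nu/2}$. This part is fine.

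The gap is in your deduction of \eqref{eq:tailEstlargestVolgena=0}. You claim the per-box events "are made independent up to a harmless error by the Markov/domain-Markov property of the GFF, with the harmonic part controlled by a union bound since it has Gaussian tails." Two problems. First, the harmonic part of the field in a box of radius $\rho$ has typical size $\rho^{-\nu/2}$, which is exactly the critical window for the level — it is not negligible by a union bound; this is repairable (the bound \eqref{eq:tailEstlargestVolgen} tolerates a level shift of this order at constant cost), but it must be built in. Second, and more seriously, the conditional independence you get is only \emph{given} the external field, and the conditional success probability of a box is bounded below only on the event that its harmonic input is not too negative. The events "harmonic part too negative" in different boxes are positively correlated decreasing events, so FKG goes in the wrong direction for upper-bounding the probability that all boxes are bad, and a naive Markov/union argument on the number of bad boxes only produces an error of constant order, not the required $\exp(-ct^{C})$. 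Turning a per-box constant success probability into a stretched-exponential failure bound needs genuine independence of the box events (or exponential concentration of the number of boxes with good environment), which your sketch does not supply. The paper circumvents this entirely by switching to the loop-soup representation: defining $\tilde{\LV}^0_r(x)$ using only loops entirely contained in disjoint enlarged balls $\tilde B(x,\zeta r)$ makes the box observables exactly independent (Poisson point process), and the discrepancy with $\LV^0_r(x)$ is controlled by the Poisson number (parameter $\leq C\zeta^{-\nu}$) of loops crossing from $\tilde B(x,r)$ to $\tilde B(x,\zeta r)^{\mathsf c}$, made small by taking $\zeta$ large; then $\P(\text{all } \asymp t^{\alpha}\text{ boxes fail})\leq (1-c)^{ct^{\alpha}}$ gives \eqref{eq:tailEstlargestVolgena=0} after rescaling. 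You would need either this device or a genuinely quantitative decoupling argument to close your amplification step.
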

In view of Proposition~\ref{pro:upperMRa}, the critical bound \eqref{eq:tailEstlargestVolgena=0} is sharp up to constants if $\sup q<\infty$, which is expected to be true below the upper critical dimension, that is if $2\alpha>3\nu$. We refer to Proposition~\ref{pro:meanfieldlower} for a corresponding statement which is expected to be sharp up to logarithms in the mean-field regime $2\alpha\leq 3\nu$.
 
Let us comment on the choice of the scale $(|a|\vee r^{-\frac\nu2})r^{\alpha}$ in the probability in \eqref{eq:tailEstlargestVolgen}. When $a=0$, in view of Proposition~\ref{pro:upperMRa} the largest scale that can be reached by $M_r^a$ is $r^{\alpha-\frac\nu2}$, and \eqref{eq:tailEstlargestVolgen} shows that this scale is indeed reached. When $a<0$, the scale $|a|r^{\alpha}$ simply corresponds to the average number of points in $B(r)$ which belong to the infinite cluster of $\{\phi\geq a\}$ by \cite[Corollary~1.2]{DrePreRod5}. Therefore,  for $a<0$ inequality \eqref{eq:tailEstlargestVolgen} states that with probability $1-\eps$, the largest cluster of $\{\phi\geq a\}$ in $B(r)$ has a size comparable to the size of the infinite cluster in $B(r)$, when this cluster has a large enough chance to intersect $B(r)$, and comparable to the largest critical cluster otherwise. Note that these statements are actually not true on $\Z^d$ for $d\geq7$, see Remark~\ref{rk:final},\ref{rk:boundinanydimension} for details, and hence the assumption $\sup q<\infty$ in Theorem~\ref{the:mainMra} is necessary.

For $a>0$, \eqref{eq:tailEstlargestVolgen} simply gives a lower bound on the probability that the subcritical cluster at level $a$ has the same number of points as the typical supercritical cluster at level $-a$.  Note moreover that if the conjectured upper bound matching \eqref{eq:tailEstVolgen} up to constant is satisfied, one can use a union bound to derive an upper bound matching \eqref{eq:tailEstlargestVolgen} up to constants for $a>0$ and $r\geq Ca^{-2/\nu}\log(a^{-1})^{\frac1\nu}$. Since $\P(\LV_r^a\geq cr^{\alpha-\frac\nu2})$ is of constant order whenever $r< a^{-2/\nu}$ by \eqref{eq:tailEstlargestVolgen} and a trivial upper bound, the usual scaling ansatz indicates that the lower bound \eqref{eq:tailEstlargestVolgen} should be sharp up to constants.

\section{Some preparation}
\label{sec:pre}
Throughout the remainder of this article we tacitly assume that $\G$ is a weighted graph as introduced in Section~\ref{sec:mainresults}, on which the assumptions \eqref{eq:ellipticity}, \eqref{eq:intro_sizeball} and \eqref{eq:intro_Green} (see Section~\ref{sec:mainresults}) are satisfied.
In this section introduce some more notation and gather a few preliminary results within this framework that will be used later, including initial bounds on the decay of the cluster of the origin being of large volume, cf.\ Proposition \ref{pro:easyoffcriticalbounds}. 

\subsection{The graph $\G_K$}
\label{sec:GK}

We start by introducing the weighted graph $\G_K$ that will play an important role, where
\begin{equation}\label{eq:K-ass}
\text{$K\subset\tilde{\G}$ is compact and connected,}
\end{equation}
(see above \eqref{eq:introGFF} for terminology; note that this includes the possibility $K=\emptyset$). Note in particular for later purposes that the cluster $\K^a$, see \eqref{eq:introKa}, satisfies \eqref{eq:K-ass} if $|\K^a|<\infty$.
Informally, $\G_K$ is such that its associated metric graph $\tilde{\G}_K$ corresponds to the unbounded connected component of $\tilde{\G}\setminus K$, with killing on $K$. Contrary to $\G=(G,\lambda)$, the graph $\G_K=(G_K,\lambda^K, \kappa^K)$ will additionally have a non-zero killing measure that we denote by $\kappa^K$. The definition of the metric graph $\tilde{\G}_K$ in the presence of a killing measure is the same as without a killing measure except that, for each $x\in{G}$ with $\kappa_x>0$, we add to the metric graph $\tilde{\G}_K$ a half-open interval starting at $x$ of Euclidean length $1/(2\kappa_x)$. We refer to \cite[Section 2]{DrePreRod3} for details.

The vertex set $G_K$ of $\G_K$ contains the unbounded connected component of $G\cap K^{\mathsf{c}}$, where $  K^{\mathsf{c}}= \tilde{\G} \setminus K$ (cf.~\cite[(6.5)]{DrePreRod5} regarding its uniqueness), but not only. Moreover, for each vertex $x\in{\partial K}$ and $y$ in the unbounded connected component of $G\cap K^{\mathsf{c}}$ such that there is an edge starting in $y$ which contains $x$ in $\tilde{\G}$, we further add to $G_K$ the point $z_{x,y}$ equidistant (rel.~to Euclidean distance) from $x$ and $y$ on that edge. This completes the specification of $G_K$, and we now turn to the conductances $\lambda^K$ and killing measure $ \kappa^K$. For $y,z$  in the unbounded connected component of $G\cap K^{\mathsf{c}}$ such that $\lambda_{y,z}>0$, we set $\lambda^K_{y,z}= \lambda_{y,z}$. If on the other hand $x\in{\partial K}$ and $y$  in the unbounded connected component of $G\cap K^{\mathsf{c}}$ are such that there is an edge starting in $y$ which contains $x$, we set $\lambda_{y,z_{x,y}}^K=\kappa_{z_{x,y}}^K= \frac{1}{\varrho(x,y)}(=\frac{1}{2\varrho(x,z_{x,y})}$), where $\varrho(\cdot,\cdot)$ denotes the Euclidean distance on the cable of $\tilde{\G}$ containing $x$ and $y$ (recall that if $y'$ is the neighbor of $y'$ corresponding to the cable containing $x$ then this cable has total length $\varrho(y,y')= 1/2\lambda_{y,y'}$). The measure $\kappa^K$ vanishes at all other vertices.

 The weights $\lambda^K$ are chosen so that the Euclidean length of the cable between $y$ and $z_{x,y}$ is the same on $\tilde{\G}$ and $\tilde{\G}_K$, the metric graph associated to $\G_K$, and $\kappa^K$ so that the length of the half-open interval starting in $z_{x,y}$ on $\tilde{\G}_K$ is the same as the length of the interval between $z_{x,y}$ and $x$ on $\tilde{\G}$. In particular, one can identify $\tilde{\G}_K$ with the largest connected component of $\tilde{\G}\setminus K$, and identify the diffusion on $\tilde{\G}_K$, whose law will be denoted by $\PPK_{\cdot}$, with the diffusion on this component, killed when hitting $K$. The Green function $g_{\tilde{\G}_K}$ on $\tilde{\G}_K$ (see above \eqref{eq:introGFF} for notation) can thus be identified with the restriction to $\tilde{\G}_K$ of the Green function on $\tilde{\G}$ killed when hitting $K$. In the same vein, one also identifies the law $\PK$ of the Gaussian free field on $\tilde{\G}_K$, defined by \eqref{eq:introGFF} with $g_{\tilde{\G}_K}$ in place of $g=g_{\tilde{\G}}$, with the law of the Gaussian free field killed on $K$. Note that all of the above is well-defined and consistent if $K$ in \eqref{eq:K-ass} is the empty set, in which case $\G_{\emptyset}=\G$, $\PPKK{\emptyset}_{\cdot}=P_{\cdot}$ and $\PKK{\emptyset}=\P$.

\subsection{Markov property}

Let us now recall the strong Markov property for the Gaussian free field, see e.g.~\cite[(1.19)]{MR3492939} for details. For  $O\subset\tilde{\G}$ let $\mathcal{A}_O$ denote the $\sigma$-algebra $\sigma({\phi}_x,\,x\in{O})$. For compact $K\subset\tilde{\G}$ we consider $\mathcal{A}_K^+=\bigcap_{\eps>0}\A_{K^\eps}$, where $K^\eps$ is the open $\eps$-ball around $K$ for the geodesic distance on $\tilde{\G}$ which assigns length one to each edge. We say that a (random) connected set $\K \subset \tilde \G$ measurable with respect to $\phi$ is \textit{compatible} if it is always compact and if $\{\K\subset O\}\in{\A_O}$ for any open set $O\subset\tilde{\G}.$ For such $\K$ let
\begin{equation}
\label{eq:Markov1}
    \begin{split}
    \mathcal{A}_{\K}^+\stackrel{\text{def.}}{=}\Big\{A\in{\A_{\tilde{\G}}}:\,A\cap\{\K\subset K'\}\in{\A_{K'}^+}\ &\text{for all compact }K'\subset\tilde{\G}  \text{ with } \mathring{K}' \neq \emptyset\Big\}.
        \end{split}
\end{equation}
For any $K$ as in \eqref{eq:K-ass} and measurable set $\K\subset\tilde{\G}_K$ with respect to $\phi$ under $\PK$, with a slight abuse of notation we will also write $\mathcal{A}_{\K}^+$ for the $\sigma$-algebra \eqref{eq:Markov1} but for the graph ${\G}_K$ (with associated cable system $\tilde\G_K$), and the dependency on $K$ will always be clear from context. 
The strong Markov property on the graph $\G_K$ then asserts that for any compact and connected set $K\subset\tilde{\G}$ (cf.~\eqref{eq:K-ass}) and any random compatible and connected set $\K \subset \tilde{\G}_K,$ 
\begin{equation}
\label{eq:Markov2}
(\phi_x-h_{\K}^{K}(x))_{x\in{\tilde{\G}_{\K\cup K}}}\text{ under }\PK(\, \cdot\,|\,\mathcal{A}_{\K}^+)\text{ has law }\PKK{K\cup\K};
\end{equation}
here, for a compact set $K'\subset\tilde{\G}_K$, we write $h_{K'}^{K}(x)=\PPK_x(H_{K'}<\infty)=P_x(H_{K'}<H_K)$ for the entrance probability of $K'$ on the graph $\G_K$, with $H_{K'} =  \inf\{t \geq 0 : X_t \in K' \}$.

\subsection{Capacity as a tool for the Gaussian free field}

We now collect some important results about the capacity of sets, in particular of level set clusters of the Gaussian free field in a box. For a compact and connected set $K\subset \tilde{\mathcal{G}}$, we write $e_{K}= e_{K,\tilde{\G}}$ for the equilibrium measure of $K$ relative to $\tilde{\G}$, which is supported on a finite set included in $\partial K$ (see for instance \cite[(2.16)]{DrePreRod3} for its definition in the present context). Its total mass
 \begin{equation}
 \label{eq:cap}
 \text{cap}(K)= \text{cap}_{\tilde{\G}}(K) \stackrel{\text{def.}}{=} \int {\rm d}e_{K,\tilde{\G}} \ (< \infty)
 \end{equation}
 is the capacity of $K$. In view of \cite[(5.7)]{DrePreRod5}, the capacity of a ball satisfies
\begin{equation}
\label{eq:capBallBd}
\begin{split} 
cr^\nu \leq {\rm cap}_{\tilde{\G}_K} (B(x,r)) \leq Cr^\nu\text{ for all }x\in{G}.
\end{split}
\end{equation}
for any $K\subset \tilde{B}(0,r)$ and $x\in{G}$ with $d(x,0)\geq CR1\{K\neq\emptyset\}$, where $\tilde{\G}_K$($=\tilde{\G}$ if $K=\emptyset$) is the graph defined in Section~\ref{sec:GK}. One of the main interest of the notion of capacity is that it can be used to estimate hitting probabilities of random walks. Indeed, using a reasoning similar to \cite[(2.17)]{prevost2023passage} (but on the graph $\G_K$), combined with \cite[Lemma~3.1]{DrePreRod2}, there exists $\Cl{C:exit}\in{(1,\infty)}$ such that for all $K,A\subset\tilde{\G}$ as in \eqref{eq:K-ass}, $r\geq1$, $x,y\in{{G}_K}$, $A\subset \tilde{B}(x,r)$, $y\in{B(x,r)}$ with $d(x,K)\geq 2\Cr{C:exit}r$ we have
 \begin{equation}
\label{eq:hittingvscap}
\begin{split} 
 P_y^K\big(H_{A}<H_{\tilde{B}(x,\Cr{C:exit}r)^{\mathsf c}}\big)\geq cr^{-\nu}\mathrm{cap}_{\tilde{\G}_K}(A).
\end{split}
\end{equation}

We now recall the following useful change of measure formula for the Gaussian free field, which was first used in \cite{BoDeZe-95}. A proof of the version we use here can be either found  in the proof of \cite[Lemma~3.2]{GRS21} in the discrete setting, see in particular (3.7) and the last equation therein, or in \cite[(6.16),(6.17)]{DrePreRod5} on metric graphs. For all $K\subset \tilde{\G}$ as in \eqref{eq:K-ass}, $A\subset\tilde{\G}_K$ compact, $b>0$ and events $E\in{\mathcal{B}(\R^{\tilde{\G}_K})}$, writing $\P_{b,A}^K$ for the law of $\big(\phi_x+bP_x^K(H_{A}<\infty)\big)_{x\in{\tilde{\G}_K}}$ under $\PK$, we have that
\begin{align} \label{eq:entropy}
\begin{split}
    &\PK\big({E}\big)
    \ge\PK_{b,A}(E)
    \cdot \exp \bigg\{ - \frac{b^2 {\rm cap}_{\tilde{\G}_K}(A) + 2/{\rm e}}{2\PK_{b,A}(E)} \bigg\}.
\end{split}
\end{align}

We now collect some properties concerning the capacity of clusters. We write $\mathcal{K}^a(x)$, $x \in G$, to denote the cluster of $x$ in $\{\varphi \geq a\}$, defined as in \eqref{eq:introKa} with $x$ in place of $0$, whence $\mathcal{K}^a= \mathcal{K}^a(0)$. It  follows from \cite[Corollary~3.3,(1), Lemma~3,4,(2) and Theorem~3.7]{DrePreRod3} that on any graph $\G$ such that $\sup_{x\in{G}}g_{\tilde{\G}}(x,x)<\infty$, the density of $\mathrm{{\rm cap}}( \K^{a}(x))$ under $\P$ is given by \cite[(3.8)]{DrePreRod3}. Since for all $x\in{K}$, $g_{\tilde{\G}_K}(x,x)\leq g_{\tilde{\G}}(x,x)\leq C$ by \eqref{eq:intro_Green}, this result can also be applied to the graph $\G_K$.  In particular one deduces that for all sets $K\subset\tilde{\G}$ as in \eqref{eq:K-ass} (possibly with $K=\emptyset$), $a\in{\R}$, $x\in{G}$ and $s\geq C$, 
\begin{equation}
\label{eq:capTail}
    cs^{-\frac12}e^{-Ca^2s} \leq \PK\big(s \leq \mathrm{cap}_{\tilde{\G}_K}( \K^{a}(x)) < \infty 
    \big)
    \leq C \big({g_{\tilde{\G}_K}(x,x)}s\big)^{-\frac12} e^{-ca^2s}.
\end{equation}
The tail estimate \eqref{eq:capTail} will be useful in various places. In particular, close to criticality, combining it with \eqref{eq:qPsiBd}, one can obtain the following local version of \eqref{eq:capTail}. Let us first introduce, for a closed set $\tilde{B} \subset \tilde\G_K$, $x\in{G_K}$ and $a \in \mathbb{R}$, the cluster of $x$ in $\{\varphi \geq a \}\cap \tilde{B}$, defined as
\begin{equation}
\label{eq:Kr}
{\K}^{a}_{\tilde B}(x) =\big\{ y \in  \widetilde B: y \leftrightarrow x \text{ in } \{\varphi \geq a \}\cap \tilde{B} \big\}.
\end{equation}
Note that  ${\K}^{a}_{\tilde \G_K}(0)= {\K}^{a}$, the cluster of $0$ in $\{\varphi \geq 0\}$ under $\PK$, and that ${\K}^{a}_{\tilde B}(x)$ is empty unless $x \in G_K\cap \tilde B$. In the sequel, we will typically be interested in the case that $\widetilde{B}=\widetilde{B}(x,r)$ for some $x \in G$ and $r \geq 1$. When $\tilde{B}=\tilde{B}(r)$(=$\tilde{B}(0,r)$), we abbreviate $\K_r^a(x)$ for $\K_{\tilde{B}(r)}^a(x)$.

\begin{Lemme}
\label{lem:capbox}
There exists $\Cl[c]{C:lawcaploc}>0$ such that 
the following holds true: For all $x \in \G$ and for all $K$ as in \eqref{eq:K-ass} with $\tilde{B} \cap K = \emptyset$ (abbreviating $\tilde B=\tilde{B}(x,r)$), for all $s,r\geq1$, $a\geq0$ and $x\in{G}$ such that,  $s\leq \Cr{C:lawcaploc}r^{\nu}q(r)^{-2}$, as well as if $s\leq 1\vee a^{-2}$ in case $a\neq0$, we have

\begin{equation}\label{eq:clustCapLocal}   
\begin{split}
\PK\Big(\mathrm{cap}_{\tilde{\G}_K}\big( \K^{a}_{\tilde{B}}(x)\big)\geq s \Big)
\ge cs^{-1/2}\exp\{-Ca^2\}.
\end{split}
\end{equation}
\end{Lemme}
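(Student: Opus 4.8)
\textbf{Proof strategy for Lemma~\ref{lem:capbox}.}
The plan is to deduce the local estimate \eqref{eq:clustCapLocal} from the global tail \eqref{eq:capTail} (applied on the graph $\G_K$) by controlling the probability that the cluster $\K^a_{\tilde B}(x)$ differs from the full cluster $\K^a(x)$, i.e.\ the probability that $\K^a(x)$ reaches $\partial \tilde B(x,r)$. First I would note that $\K^a(x)\cap\tilde B(x,r)\subset\K^a_{\tilde B}(x)$ fails in general, but if $\K^a(x)$ does not exit $\tilde B(x,r)$ then the two clusters coincide; hence
\begin{equation*}
\PK\big(\mathrm{cap}_{\tilde\G_K}(\K^a_{\tilde B}(x))\geq s\big)
\geq \PK\big(s\leq \mathrm{cap}_{\tilde\G_K}(\K^a(x))<\infty\big) - \PK\big(x\stackrel{\geq a}{\longleftrightarrow}\partial\tilde B(x,r)\big).
\end{equation*}
By the lower bound in \eqref{eq:capTail} (valid on $\G_K$, as explained in the text, since $g_{\tilde\G_K}(x,x)\leq g_{\tilde\G}(x,x)\leq C$), the first term is at least $cs^{-1/2}e^{-Ca^2 s}$, which under the standing hypothesis $s\leq 1\vee a^{-2}$ (so $a^2 s\leq 1$ when $a\neq 0$, and trivially when $a=0$) is at least $cs^{-1/2}e^{-Ca^2}$ — note the exponent has been turned from $a^2s$ into the $s$-free $a^2$ exactly because of this restriction on $s$.

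The second step is to show the subtracted one-arm term is at most half of this, using the restriction $s\leq\Cr{C:lawcaploc}r^{\nu}q(r)^{-2}$. Since $a\geq 0$, monotonicity gives $\PK(x\stackrel{\geq a}{\longleftrightarrow}\partial\tilde B(x,r))\leq \PK(x\stackrel{\geq 0}{\longleftrightarrow}\partial\tilde B(x,r))\leq q(r)r^{-\nu/2}$ by the definition \eqref{eq:qPsiBd} of $q$ (this requires checking the definition transfers to $\G_K$ when $\tilde B\cap K=\emptyset$, which holds because the field and the geometry inside $\tilde B$ are unchanged — this is where the hypothesis $\tilde B\cap K=\emptyset$ is used). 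So it suffices that $q(r)r^{-\nu/2}\leq \tfrac12 cs^{-1/2}e^{-Ca^2}$. When $a=0$ this reads $s\leq c' r^\nu q(r)^{-2}$, which is exactly the form of the hypothesis with $\Cr{C:lawcaploc}$ chosen as $c'$. When $a\neq 0$ we also have $s\leq a^{-2}$, hence $s^{-1/2}\geq |a|$, so it is enough that $q(r)r^{-\nu/2}\leq \tfrac12 c|a|e^{-Ca^2}$; but the right-hand side is bounded below by a constant times $|a|e^{-C}\geq c''|a|$ uniformly for $|a|\leq 1$... which unfortunately need not hold for small $|a|$ unless $r$ is large. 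The cleaner route, which I expect to be the actual argument, is to not split off $e^{-Ca^2}$ prematurely: keep the bound $cs^{-1/2}e^{-Ca^2s}$ and compare directly with $q(r)r^{-\nu/2}$, using $a^2 s\leq a^2(1\vee a^{-2})\leq 1\vee a^2\leq C$ so that $e^{-Ca^2 s}\geq e^{-C}\geq c$, which gives the lower bound $cs^{-1/2}$ with a genuinely $s$-free, $a$-free constant; then one only needs $q(r)r^{-\nu/2}\leq \tfrac12 cs^{-1/2}$, i.e.\ $s\leq \Cr{C:lawcaploc}r^\nu q(r)^{-2}$, exactly the stated hypothesis. Reinstating the harmless factor $e^{-Ca^2}\leq 1$ on the final right-hand side then yields \eqref{eq:clustCapLocal} as written (the statement is weaker than what one proves, since $e^{-Ca^2}\leq 1$).

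The main obstacle is a bookkeeping one rather than a conceptual one: making sure the constant $\Cr{C:lawcaploc}$ can be chosen once and for all so that the subtracted one-arm probability is genuinely at most, say, half of the main term, \emph{and} verifying that \eqref{eq:capTail} and the definition of $q$ legitimately apply on $\G_K$ rather than $\G$. For the former, the key point is that the main term from \eqref{eq:capTail} is $\geq c s^{-1/2}$ with a universal constant (after absorbing $e^{-Ca^2 s}\geq e^{-C}$ via $a^2 s\leq 1$), while the error is $\leq q(r) r^{-\nu/2} = (s/ r^\nu)^{1/2} q(r) s^{-1/2}\leq \Cr{C:lawcaploc}^{1/2} s^{-1/2}$; choosing $\Cr{C:lawcaploc}$ small compared to $c^2$ makes the error at most $c s^{-1/2}/2$. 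For the latter, one invokes exactly the discussion in the paragraph preceding \eqref{eq:capTail}: $g_{\tilde\G_K}(x,x)\leq g_{\tilde\G}(x,x)$, and for $x$ and the ball $\tilde B(x,r)$ sitting inside the component $\tilde\G_K$ with $\tilde B(x,r)\cap K=\emptyset$, the restrictions of the field and the walk to $\tilde B(x,r)$ agree with those on $\tilde\G$, so the one-arm bound $q(r)$ and the capacity-tail lower bound are inherited verbatim. One should also double-check the edge cases $K=\emptyset$ (then $\G_K=\G$ and everything is literal) and the requirement $s\geq 1$ together with $s\geq C$ in \eqref{eq:capTail} — the latter can be arranged by enlarging the universal constant $C$ or restricting to $s\geq C$ and treating $1\leq s\leq C$ by monotonicity in $s$ of the left-hand side of \eqref{eq:clustCapLocal} together with the $s=C$ bound.
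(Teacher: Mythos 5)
Your route coincides with the paper's for the regime of bounded $a$: decompose via the event that $\K^a(x)$ exits $\tilde B$, lower bound the main term by the capacity tail \eqref{eq:capTail} on $\G_K$, bound the error by the one-arm quantity $q(r)r^{-\nu/2}$, and absorb it using $s\leq \text{(const)}\, r^\nu q(r)^{-2}$. But there is a genuine gap for large $a$. Your absorption of the exponential rests on "$a^2s\leq 1$ when $a\neq 0$'' and later on "$1\vee a^2\leq C$'': both are false once $a>1$, since then the hypothesis only gives $s\leq 1$, so $a^2s$ can be as large as $a^2$. And the gap is not merely cosmetic: for large $a$ the target lower bound is of order $s^{-1/2}e^{-Ca^2}$, i.e.\ exponentially small in $a^2$, while your subtracted error term $q(r)r^{-\nu/2}\leq Cs^{-1/2}$ has no decay in $a$ at all; no choice of the small constant in the hypothesis can make a quantity of order $e^{-Ca^2}$ dominate a constant-order error, so the subtraction scheme cannot close for $a$ beyond a fixed constant. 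The paper treats $a\geq1$ (where necessarily $s\leq 1$) by a separate, direct argument: since $\mathrm{cap}_{\tilde\G_K}(B(x,C))\geq 1\geq s$ for a suitable constant $C$ (by \eqref{eq:capBallBd}), it suffices to force $\phi_y\geq a$ on all of $\widetilde B(x,C)$, and Gaussian tail bounds combined with the FKG inequality and \cite[(1.6)]{DrePreRod5} give probability at least $c\exp\{-Ca^2\}$ for this event. Some such mechanism is needed in your write-up; the subtraction argument alone only proves the lemma for $a$ in a bounded range (which matches the paper's case $a\in[0,1]$, where your computation is correct, including the repair of the $s\geq C$ requirement in \eqref{eq:capTail} by applying it at $C\vee s$).

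A secondary point: your justification for transferring the one-arm bound from $\P$ to $\PK$ — "the field and the geometry inside $\tilde B$ are unchanged'' — is not correct. Killing on $K$ changes the Green function, hence the covariance of the field, everywhere on $\tilde\G_K$, including inside $\tilde B$, even when $\tilde B\cap K=\emptyset$. The inequality $\PK\big(x \stackrel{\geq 0}{\longleftrightarrow} \partial \widetilde{B}(x,r)\big)\leq q(r)r^{-\nu/2}$ is nevertheless true, but it requires an argument: the paper derives it from the isomorphism with loop soups and the restriction property, which show that the law of $\K^0(x)$ under $\PK$ is stochastically dominated by its law under $\P$. You correctly flagged that something must be checked at this step, but the reason you give would not survive scrutiny, so this part should be replaced by the domination argument (or an equivalent one).
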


\begin{proof}
We start with the case $a\in{[0,1]}$. Combining \eqref{eq:capTail} for $s\leq a^{-2}$ and \eqref{eq:qPsiBd}, we have that
\begin{equation}  
\label{eq:capinball}
\begin{split}
\PK\big(\mathrm{cap}_{\tilde{\G}_K}( \K^{a}_{\tilde{B}}(x))\geq s \big)
&\geq\PK\big( \mathrm{cap}_{\tilde{\G}_{K}}\big({\K}^a(x)\big)\geq s  \big)-\PK\big({\K}^a(x)\not\subset \tilde{B}\big) \geq cs^{-\frac12}-r^{-\frac\nu 2}q(r).
\end{split}
\end{equation}
Note that in the last inequality above, we used the fact that $\K^a(x)\subset \K^0(x)$ as well as the definition of $q$ in \eqref{eq:qPsiBd}. While this definition is for the field $\phi$ on the graph $\tilde{\G}$ instead of on $\tilde{\G}_K$, we will now argue that the resulting upper bound on $\P\big({\K}^a(x)\not\subset \tilde{B}\big) \leq r^{-\frac\nu 2}q(r) $ remains true on the graph $\tilde{\G}_K$ (i.e.,~with $\P^K$ in place of $\P$), as used above. Indeed, using the isomorphism with loop soups \cite[Proposition~2.1]{MR3502602}, combined with the restriction property of loop soups \cite[Section~6]{MR3238780}, one can infer that $\K^0(x) $  has the same law under $\PK$ as the cluster of $x$ in a loop soup on $\tilde{\G}$ (up to the choice of an independent sign, which is inconsequential), restricted to the loops which avoid $K$. This cluster of loops avoiding $K$ is included in the entire cluster of $x$ consisting of all loops, which has the same law (up to a sign)  as $\K^0(x)$ under $\P$ by the isomorphism again. In particular, it follows that the law of $\K^0(x)$ under $\PK$ is stochastically dominated by the law of $\K^0(x)$ under $\P$. This finishes the proof of \eqref{eq:clustCapLocal} when $a\in{[0,1]}$ upon taking $\Cr{C:lawcaploc}$ small enough. 

When $a\geq1$, we necessarily have  $s\leq 1$  under our assumptions. Moreover, it follows from \eqref{eq:capBallBd} that $\mathrm{cap}_{\tilde{\G}_K}(B(x,C))\geq \mathrm{cap}(B(x,C))\geq 1$ if $C$ is a large enough constant, and hence for  all $s \leq 1$, 
\begin{multline*}
\PK\big(\mathrm{cap}_{\tilde{\G}_K}( \K^{a}_{\tilde{B}}(x))\geq s \big) \geq \black 
\PK\big(\mathrm{cap}_{\tilde{\G}_K}( \K^{a}_{\tilde{B}}(x))\geq 1 \big)
\\\geq \PK\big(\phi_y\geq a\text{ for all }y\in{ \widetilde{B}(x,C)}\big) \geq c\exp\{-Ca^2\},
\end{multline*}
where the last inequality follows by combining classical bounds on the tail of Gaussian variables, \eqref{eq:intro_Green}, the FKG-inequality and \cite[(1.6)]{DrePreRod5} (for instance, forcing the field to exceed value $2a$ on all vertices of $B(x,C)$ in order to obtain the desired Gaussian tail in $a$ when applying \cite[(1.6)]{DrePreRod5}). 
\end{proof}

The next result presents useful comparison estimates between capacity and volume of a generic set $K$. Recall that $\G$ satisfies \eqref{eq:ellipticity}, \eqref{eq:intro_sizeball}, \eqref{eq:intro_Green}, cf.~the beginning of this section. 

\begin{Lemme}\label{L:cap-LB} 
For all $K \subset \tilde{\G}$  as in \eqref{eq:K-ass}, we have
\begin{align}
  &\label{eq:subadd} \textnormal{cap}(K) \le C \cdot |K|,  \text{ if $K\cap G\neq\emptyset$,  and} \\
  &\label{eq:cap-LB} \textnormal{cap}(K) \geq c \cdot|K|^{\frac\nu\alpha}.
\end{align}
\end{Lemme}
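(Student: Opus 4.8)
\textbf{Proof plan for Lemma~\ref{L:cap-LB}.} The subadditivity bound \eqref{eq:subadd} is the easy half. The plan is to use that capacity is subadditive, $\mathrm{cap}(K) \le \mathrm{cap}(K \cap G) + \sum_{e} \mathrm{cap}(K \cap e)$ where the sum runs over the edges $e$ meeting $K$, together with the fact that each edge has length bounded above and below (by \eqref{eq:lambdabounded}), so that the capacity of any single edge segment in $\tilde\G$ is $O(1)$. Since $K$ is connected, the number of edges it meets is comparable to $|K\cap G|=|K|$, provided $K\cap G\neq\emptyset$. Combining gives $\mathrm{cap}(K)\le C|K|$ after also bounding $\mathrm{cap}(K\cap G)\le \sum_{x\in K\cap G}\mathrm{cap}(\{x\})\le C|K|$, using $g(x,x)\ge \Cr{c:green-lb}$ from \eqref{eq:intro_Green} so that $\mathrm{cap}(\{x\})=1/g(x,x)\le C$.

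\textbf{The main point is the lower bound \eqref{eq:cap-LB}.} Here the natural approach is variational: by the Dirichlet-principle/last-exit decomposition, for any probability measure $\mu$ supported on $K$ (more precisely on $K\cap G$, or on $K$ itself in the cable system) one has $\mathrm{cap}(K)\ge \left(\int\int g(x,y)\,\mu(\mathrm dx)\mu(\mathrm dy)\right)^{-1}$. So it suffices to exhibit a good test measure. I would take $\mu$ to be the normalized restriction of $\lambda$ to $K\cap G$, i.e. $\mu=\lambda(\cdot\cap K)/\lambda(K\cap G)$; note $\lambda(K\cap G)\asymp |K|$ by \eqref{eq:lambdabounded}. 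Then I need to bound the energy $\int\int g\,\mathrm d\mu\,\mathrm d\mu$ from above. Using the Green-function bound $g(x,y)\le \Cr{c:green-ub} d(x,y)^{-\nu}$ (and $g(x,x)\le C$), this reduces to controlling, uniformly in $x\in K\cap G$, the quantity $\frac{1}{|K|}\sum_{y\in K\cap G} d(x,y)^{-\nu}$. The worst case is when $K$ is packed as tightly as possible around $x$; by the volume growth \eqref{eq:intro_sizeball}, the number of points of $G$ within distance $t$ of $x$ is $O(t^\alpha)$ (here I use $\lambda_y\asymp1$ again to pass between $\lambda$-measure and cardinality). Hence, ordering the points of $K\cap G$ by distance to $x$, the $k$-th closest is at distance $\gtrsim k^{1/\alpha}$, so $\sum_{y\in K} d(x,y)^{-\nu}\le C\sum_{k=1}^{|K|}k^{-\nu/\alpha}$. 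Since $\nu\le\alpha-2<\alpha$, we have $\nu/\alpha<1$ and this sum is $\le C|K|^{1-\nu/\alpha}$. Dividing by $|K|$ gives $\int\int g\,\mathrm d\mu\,\mathrm d\mu\le C|K|^{-\nu/\alpha}$, and inverting yields $\mathrm{cap}(K)\ge c|K|^{\nu/\alpha}$ as claimed.

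\textbf{Expected main obstacle.} The conceptual content is minor — this is a standard packing/energy estimate — but the bookkeeping obstacle is making sure the test-measure/energy argument is applied on the correct graph and set: $K$ lives in the cable system $\tilde\G$, capacity is $\mathrm{cap}_{\tilde\G}$, and one should either work directly on $\tilde\G$ with the extended Green function (which still satisfies \eqref{eq:intro_Green} on $G$ and is bounded on cables) or note $\mathrm{cap}_{\tilde\G}(K)\ge \mathrm{cap}_{\tilde\G}(K\cap G)=\mathrm{cap}_{\G}(K\cap G)$ by monotonicity and then run the discrete computation. One must also handle the degenerate cases $K\cap G=\emptyset$ (then $|K|=0$ by the convention \eqref{eq:vol} and \eqref{eq:cap-LB} is trivial) and small $|K|$ (covered since the bound is only a lower bound and $\mathrm{cap}(\{x\})=1/g(x,x)\ge c$). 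Apart from keeping track of the constants coming from \eqref{eq:lambdabounded} and \eqref{eq:intro_sizeball}, no serious difficulty is anticipated.
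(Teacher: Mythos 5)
Your proposal is correct and follows essentially the same route as the paper: the lower bound \eqref{eq:cap-LB} is proved by the identical variational/energy argument with a (near-)uniform test measure on $K\cap G$ and a volume-growth packing estimate (your ordering-by-distance computation is just the paper's dyadic-shell bound in disguise), and your upper bound, though phrased via subadditivity over points and edges rather than the paper's enlargement of $K$ to a union of cables with equilibrium measure supported on $\partial K'\subset G$, rests on the same connectivity plus bounded-degree counting and bounded weights. No gaps worth flagging.
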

The inequality in \eqref{eq:subadd} expresses the
sub-additivity inherent to $\text{cap}(\cdot)$ within the present framework, while the second bound is essentially saturated when $K$ is a ball.

\begin{proof}
We first show \eqref{eq:subadd}, and begin by reducing to the case where $\partial K \subset G$. Indeed, for a given $K \subset \tilde{\G}$ with $|K|< \infty$, consider $K' \supset K$ defined as comprising all the (closed) cables intersected by $K$.
Clearly $K \subset K'$ and $\partial K' \subset G$ hold by construction, and, as we now explain,
\begin{equation}
\label{eq:KvsK'} |K'| \leq C |K|
\end{equation}
for some constant $C \in (0,\infty)$ uniform in $K$.
Indeed, by assumption on $K$, if the closed cable $I_{e}$ for some $e=\{x,y\}$ (with $\lambda_{x,y}>0$) is a subset of $K'$, then $x$ or $y$ must necessarily be contained in $K$ since $K$ is connected and intersects $G$; thus, \eqref{eq:KvsK'} follows since the maximal degree of $\G$ is finite owing to \eqref{eq:ellipticity}. Now using that, by definition of the equilibrium measure of $K$, its support is a subset of $\partial K$, combined with \eqref{eq:lambdabounded}, one readily obtains that 
$$
 \textnormal{cap}(K) \stackrel{K \subset K'}\leq \textnormal{cap}(K') \leq (\sup_{x \in G} \lambda_x)|{\partial}K'| \stackrel{\partial K' \subset G}{\leq} C|K'| \stackrel{\eqref{eq:KvsK'}}{\leq}  C'|K|.
$$
We now turn to \eqref{eq:cap-LB}. In fact we prove the stronger statement that 
\begin{equation}\label{eq:cap-LB-pf}
\textnormal{cap}(K \cap G) \geq c \cdot|K|^{\frac\nu\alpha} \, \big( = c \cdot|K \cap G|^{\frac\nu\alpha} \big),
\end{equation}
from which the desired bound is immediate by monotonicity of $\text{cap}(\cdot)$. Since \eqref{eq:cap-LB-pf} deals effectively with $(K \cap G) \subset G$, we assume without loss of generality in the sequel that $K \subset G$ (a finite set). Let 
\begin{equation}
    \label{eq:cap-LB-pf1}
v_K(x)\stackrel{\text{def.}}{=} \sum_{y \in K} g(x,y).
\end{equation}
We aim to show that under the above assumptions on $\G$,
\begin{equation}
    \label{eq:cap-LB-pf2}
v_K \stackrel{\text{def.}}{=} \sup_{x \in K} v_K(x) \leq C |K|^{\frac{\alpha-\nu}{\alpha}}. 
\end{equation}
If \eqref{eq:cap-LB-pf2} holds, then using a well-known variational characterization of the capacity (see for instance \cite[(2.21)]{DrePreRod3} and pick $\mu$ the uniform distribution on $K$), by which $\text{cap}(K) \geq  |K|/v_K$, the claim \eqref{eq:cap-LB} immediately follows. Now in order to establish \eqref{eq:cap-LB-pf2}, we observe that in view of \eqref{eq:cap-LB-pf1} and applying \eqref{eq:intro_Green}, we get that
\begin{equation}
    \label{eq:cap-LB-pf3}
v_K(x) \leq C\sum_{k \geq1} \alpha_k 2^{-k\nu}, \text{ for all } x\in K,
\end{equation}
where $\alpha_1= |K \cap B(x, 2)|$ and $\alpha_k= |K \cap (B(x, 2^k) \setminus B(x, 2^{k-1})|$ for $k \geq 2$. 
Note that $\sum_{k \geq 1} \alpha_k = |K|$, so these sums are effectively over finitely many summands only. 
 Observe now that for $\beta=(\beta_k)_{k \geq0}$ a sequence of non-negative integers, the map $\beta \mapsto \sum_{k \geq1} \beta_k 2^{-k\nu} $ is non-decreasing, where we define $\beta\leq \beta'$ if $\sum\beta_k\leq \sum\beta'_k$ and $\beta_k\leq \beta'_k$ for all $k$ such that $\beta'_k\neq 0$. One thus obtains an upper bound on the right-hand side of \eqref{eq:cap-LB-pf3} by choosing $\bar\alpha_1=|B(x,2)|$, $\bar\alpha_k=| B(x, 2^k) \setminus B(x, 2^{k-1})|$, $k \geq 2$, which satisfy   $ \bar\alpha_k \leq C 2^{\alpha k}$ using \eqref{eq:intro_sizeball} and \eqref{eq:lambdabounded}, and summing over $k  \leq  k_0 \stackrel{\text{def.}}{=} \frac1{\alpha}\log _2 |K| + C'$, so as to ensure that $ (\sum_{k \geq 1} \alpha_k =) \black |K| \leq \sum_{ 1 \leq k  \leq k_0}\bar \alpha_k$ and $\alpha_k\leq\bar\alpha_k$ for all $k\leq k_0$. Overall, recalling that $\alpha\geq \nu+2$ as assumed below \eqref{eq:intro_Green},  this yields that
$$
v_K(x) \leq C  \sum_{1 \leq k  \leq k_0}\bar \alpha_k 2^{-k\nu}  \leq C' \sum_{1 \leq k \leq k_0} 2^{k(\alpha-\nu)} \leq C'' 2^{k_0(\alpha-\nu)}= C |K|^{\frac{\alpha-\nu}{\alpha}},
$$
for all $x \in K$, whence \eqref{eq:cap-LB-pf2}, which completes the proof.
\end{proof}

\begin{Rk}[Volume functionals on $\tilde\G$] \label{R:vol-forms}
    The condition on $K$ in \eqref{eq:subadd} to have non-empty intersection with $G$ is convenient. Note that some assumption along these lines is needed since it may well be that $|K|=0$ while $K \neq \emptyset$ (in which case $\text{cap}(K) >0$). The (simple) choice \eqref{eq:vol} is fully sufficient for the purposes of the present article, owing to our assumptions on $\G$ (implying for instance that $\lambda$ has bounded weights; see \cite[Lemma A.2]{DrePreRod3}). Arguably, the most natural choice (which we could have equally worked with) is to endow the metric graph $\tilde{\G}$ with a Lebesgue measure $\text{Leb}$, assigning length $1/2{\lambda_{x,y}}$ to the cable between $x$ and $y$, and to define $\text{vol}(K)=\int_{\tilde\G} 1_K \, \mathrm{d}\text{Leb}$ for a closed set $K \subset \tilde\G$ as the corresponding Lebesgue volume.
\end{Rk}

\subsection{Random interlacements and isomorphism}

We now recall the definition of random interlacements as well some useful results, which will be essential in Section~\ref{sec:volneg} owing to a certain isomorphism with the Gaussian free field \cite{MR2892408,MR3502602,MR3492939}, which we recall below. We will in fact be mostly interested in random interlacements on $\tilde\G_K$ rather than just $\tilde\G$, with $K \subset \tilde \G$ as in \eqref{eq:K-ass} (though $K=\emptyset$ is always admissible, in which case $\tilde \G_K= \tilde \G$). 
For such $K$, we define under 
the probability measure $\OPK$ the random interlacement process $(\omega_u)_{u>0}$, which for each $u$ is a point process of doubly infinite trajectories modulo time-shift, and by $\I^u$ the associated interlacement set. We refer to \cite{MR2680403} for a detailed introduction of this process on discrete graphs, as well as to \cite{MR3502602} and \cite[Section~2.5]{DrePreRod3} for its extension to the metric graph $\tilde{\G}_K$ (the second reference in particular handles the case of non-vanishing killing measure, as present when $K \neq \emptyset$). The restriction of the forward parts of the trajectories in $\omega_u$ to a compact and connected set $A\subset \tilde{\G}_K$ (where by forward we refer to the trajectories from their first visit to $A$ onwards) have the following law: they form a Poissonian number with parameter $u\mathrm{cap}_{\tilde{\G}_K}(A)$, of i.i.d.~diffusions,  each with law $${\mathrm{cap}_{\tilde{\G}_K}(A)}^{-1}\sum_{x\in{A}}e_{A,{\tilde{\G}_K}}(x)\PPK_x,$$ where we recall that $\PPK_x$ is the canonical law of the diffusion on $\tilde \G_K$ starting from $x$. The restriction of the random interlacement set $\I^u$ to $A$ is the set of points visited by $\omega_u$ in $A$, and by definition it satisfies  
 \begin{equation}
 \label{eq:defIu}
\begin{split} 
\OPK(\I^u\cap A=\emptyset)=\exp\big\{-u\mathrm{cap}_{\tilde{\G}_K}(A)\big\}.
\end{split}
\end{equation}
 The main interest of random interlacements for the purposes of the present article is the isomorphism (Isom) from \cite{DrePreRod2}, a consequence of which we now recall. By \cite[Lemma~3.4 and Theorem~1.1]{DrePreRod2}, this isomorphism is satisfied on any graph $\G$ such that $\sup_{x\in{G}}g_{\tilde{\G}}(x,x)<\infty$, and in particular for any $K$ as in \eqref{eq:K-ass}, the isomorphism holds true for the graph $\G_K$ if $\G$ satisfies \eqref{eq:intro_Green}. Under the product measure $\OPK\otimes \PK$, we denote by $\mathcal{C}_u$ the closure of the union of the clusters of $\{x\in{\tilde{\G}_K}:|\phi_x|>0\}$ which intersect $\I^u$ (an open set). For simplicity we do not write explicitly the dependence of $\mathcal{C}_u$ on $K$ as it will be clear from the probability measure in question. A direct consequence of the isomorphism (Isom) from \cite{DrePreRod2} is that for any $u>0$,
\begin{equation}
\label{eq:isom}
\begin{gathered} 
     \text{ under }\OPK\otimes \PK,\ \big(\mathcal{C}_u,\big(\phi_x-\sqrt{2u}\big)_{x\in\tilde{\G}_K}\big)\text{ is stochastically }
     \\\text{dominated by }\big(\big\{x\in{\tilde{\G}_{ K}}:\,\phi_x\geq-\sqrt{2u}\big\},\big(\phi_x\big)_{x\in\tilde{\G}_K}\big);
\end{gathered}
\end{equation}
here, for $A,B\subset \tilde{\G}_K$ and $f,g\in{\R^{\tilde{\G}_K}}$ we introduce the partial order $(A,f)\leq (B,g)$ if and only if $A\subset B$ and $f(x)\leq g(x)$ for all $x\in{\tilde{\G}_K}$.  
For brevity, we will often simply refer to \eqref{eq:isom} as the \emph{isomorphism} itself in the following.

Next, let us recall a result from \cite{DrePreRod2} on the so-called local uniqueness of the interlacement set. The local uniqueness event for interlacements that we will work with is defined as
\begin{equation}\label{eq:lu-def}
    \text{LocUniq}_{u,r,t}(x)\stackrel{\textnormal{def.}}{=}\bigcap_{y,z\in{\mathcal{I}^u}\cap \tilde{B}(x,r)}\left\{y\leftrightarrow z\text{ in }\mathcal{I}^u\cap \tilde{B}(x,tr)\right\},
\end{equation}
where the event $\{y\leftrightarrow z\text{ in } K\}$ refers to the existence of a continuous path with range contained in $K$ intersecting both $y$ and $z$. The probabilities of local uniqueness events such as \eqref{eq:lu-def} for interlacements have first been investigated on $\Z^d$ in \cite[Proposition~1]{MR2819660}, and subsequently been extended to any graph $\G$ satisfying \eqref{eq:ellipticity}, \eqref{eq:intro_sizeball} and \eqref{eq:intro_Green} in \cite[Proposition~4.1]{DrePreRod2}. The proof of \cite[Proposition~4.1]{DrePreRod2} can easily be transferred to the graphs $\G_K$ for $K\subset\tilde{\G}$ as in \eqref{eq:K-ass}, as long as $d(x,K)$ is large enough. For our purposes it will be enough to know that, for suitable choice of $\Cl{C:localuniq}$, abbreviating $\textnormal{LocUniq}_{u,r}(x)=\textnormal{LocUniq}_{u,r,\Cr{C:localuniq}}(x)$, the following holds. For all $\varepsilon\in (0,1)$, $u>0$ and $r\geq 1$, and for all $x\in{B(0,2\Cr{C:localuniq}r)^{\mathsf c}}$  and $K\subset \tilde{B}(0,r)$ compact,
   \begin{equation}
    \label{eq:boundlocaluniq}
    \begin{split}
\OPK\big(\textnormal{LocUniq}_{u,r}(x)\big)\geq 1-\eps, \quad \text{ if $ur^{\Cl[c]{C:LBLuniq2}}\geq C(\varepsilon)$};
    \end{split}
    \end{equation}
 see \cite[(5.4) and (5.20)]{DrePreRod5}, which imply \eqref{eq:boundlocaluniq}. We refer to \cite[Theorem 5.1]{DrePreRod5} and \cite[Theorem 1.1]{prevost2023passage} for much stronger results, notably in the regime  $\alpha> 2\nu$. Another useful result concerns the capacity of the interlacement set: for $\eps,u,r,x$ and $K$ as above
 \begin{equation} \label{eq:Iu-capbound}
    \OPK(\text{cap}_{\tilde{\G}_K}(\mathcal{I}^{u} \cap B(x,r)) \geq cr^{\nu}) \geq 1-\varepsilon, \quad \text{if } ur^{\Cr{c:capline}}\geq C(\varepsilon).
\end{equation}
 In order to show \eqref{eq:Iu-capbound}, one uses the following bound, valid for all $s,t,u>0$ and $r \geq 1$, by which, abbreviating $B=B(x,r)$, 
\begin{equation*}
\begin{split}
    \OPK\big(\mathrm{cap}_{\tilde{\G}_K}(\I^u\cap B)\leq tr^{\nu}\big)&\leq  \OEK\big[e^{-s\mathrm{cap}_{\tilde{\G}_K}(\I^u\cap B)}\big]e^{str^{\nu}} \stackrel{\eqref{eq:defIu}}{=} \OEK\big[e^{-u\mathrm{cap}_{\tilde{\G}_K}(\I^s\cap B)}\big]e^{str^{\nu}} \\
    &\leq \big(\OPK(\I^s\cap B(x,r/2) \neq \emptyset) + e^{-u cr^{\Cr{c:capline}}}\big)e^{str^{\nu}}
    \leq \big(e^{-csr^{\nu}} + e^{-u cr^{\Cr{c:capline}}}\big)e^{str^{\nu}}
\end{split}
\end{equation*}
(e.g.~with $\Cl[c]{c:capline}=\nu \wedge \tfrac12$; cf.~\cite[Lemma 3.2]{DrePreRod2}, \eqref{eq:capBallBd}, and since $\mathrm{cap}_{\tilde{\G}_K}(\cdot)\geq \mathrm{cap}(\cdot)$). Taking $s=ur^{\Cr{c:capline}-\nu}$ and $t=c$ small enough, one deduces \eqref{eq:Iu-capbound}. Furthermore, using \cite[Theorem 5.1]{DrePreRod5}, as well as a more careful argument to estimate $\mathrm{cap}(\I^s\cap B)$ using \cite[Lemma~5.3]{DrePreRod5}, one has that
 \begin{equation}\label{eq:c4explicit}
\text{if $\alpha > 2\nu$, then \eqref{eq:boundlocaluniq} and \eqref{eq:Iu-capbound} hold with the choice $\Cr{C:LBLuniq2}=\Cr{c:capline}=\nu$.}
 \end{equation}

\subsection{First off-critical volume bounds}

As we now explain, combining various results from this section, namely, the law of the tail of the capacity \eqref{eq:capTail} for $K=\emptyset$, the general bound on the capacity supplied by Lemma~\ref{L:cap-LB}, the change of measure formula \eqref{eq:entropy}, the isomorphism \eqref{eq:isom}, and the local uniqueness for random interlacements \eqref{eq:boundlocaluniq}, we can already directly obtain the following bounds on the tail of $|\K^a|$, which are sharp up to constants in the exponential for large values of $a$.

\begin{Prop}
\label{pro:easyoffcriticalbounds}
For all $a \in \mathbb{R}$ and $n \geq 1$, one has that
    \begin{equation}
    \label{eq:easyoffcriticalbounds}
 \exp\big\{  - C(a^2\vee1) n^{\frac{\nu}{\alpha}} \big\}\leq  \P(n \leq |\mathcal{K}^a| < \infty) \leq \exp\big\{ -ca^2n^{\frac{\nu}{\alpha}} \big\}.
  \end{equation}
\end{Prop}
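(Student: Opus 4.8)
The plan is to treat the upper and lower bounds separately, and to reduce each to tools already assembled in this section. For the \textbf{upper bound}, the key observation is that $|\mathcal{K}^a| < \infty$ forces $\mathcal{K}^a$ to be compact and connected, so by the comparison estimate \eqref{eq:subadd} in Lemma~\ref{L:cap-LB} one has $\mathrm{cap}(\mathcal{K}^a) \leq C|\mathcal{K}^a|$ on the event $\{|\mathcal{K}^a|<\infty\}$; hence the event $\{n \leq |\mathcal{K}^a| < \infty\}$ is contained (up to adjusting the constant $c$) in $\{\mathrm{cap}(\mathcal{K}^a) \geq cn,\ \mathrm{cap}(\mathcal{K}^a) < \infty\}$. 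Wait — that inclusion goes the wrong way; what is actually true is $\{|\mathcal{K}^a| \geq n\} \subset \{\mathrm{cap}(\mathcal{K}^a) \leq C|\mathcal{K}^a|\}$ trivially, so instead I should use the \emph{lower} capacity bound \eqref{eq:cap-LB}: on $\{n \leq |\mathcal{K}^a|<\infty\}$ one has $\mathrm{cap}(\mathcal{K}^a) \geq c|\mathcal{K}^a|^{\nu/\alpha} \geq c n^{\nu/\alpha}$. Therefore $\P(n \leq |\mathcal{K}^a|<\infty) \leq \P(cn^{\nu/\alpha} \leq \mathrm{cap}(\mathcal{K}^0(0)) \text{ with the cluster at level } a)$; more precisely, apply \eqref{eq:capTail} with $K=\emptyset$, $x=0$, and $s = cn^{\nu/\alpha}$ (which is $\geq C$ once $n$ is large, the finitely many small $n$ being absorbed into the constant), giving $\P(n \leq |\mathcal{K}^a|<\infty) \leq C(gs)^{-1/2} e^{-ca^2 s} \leq \exp\{-ca^2 n^{\nu/\alpha}\}$ after shrinking $c$ to swallow the polynomial prefactor. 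This handles the right-hand inequality for all $a \in \R$ and $n \geq 1$.

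For the \textbf{lower bound}, I split into the easy regime and the regime near criticality. When $|a| \leq 1$, the bound $\exp\{-C(a^2\vee 1)n^{\nu/\alpha}\} = \exp\{-Cn^{\nu/\alpha}\}$ is already implied by the stronger Theorem~\ref{thm:volLB}: indeed \eqref{eq:tailEstVolgen} gives $\P(n \leq |\mathcal{K}^a|<\infty) \geq cn^{-\nu/(2\alpha-\nu)} \exp\{-C|a|^{2-\nu/\alpha} n^{\nu/\alpha}\} \geq cn^{-\nu/(2\alpha-\nu)}\exp\{-Cn^{\nu/\alpha}\} \geq \exp\{-C'n^{\nu/\alpha}\}$. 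So it remains to treat $|a| > 1$, where the target is $\exp\{-Ca^2 n^{\nu/\alpha}\}$. Here I follow the strategy sketched in Figure~\ref{fig:vol-gff} at its simplest: set $r = r(n) \asymp n^{1/\alpha}$ so that $\tilde B(r)$ has $\asymp n$ vertices by \eqref{eq:intro_sizeball}, and use the change-of-measure formula \eqref{eq:entropy} with $K = \emptyset$, $A = \tilde B(r)$ (or a suitable sub-ball) and a shift $b = a + \sqrt{2u}$ with $u \asymp a^2$ chosen below. Under the tilted measure $\P_{b,A}$ the field is shifted up by $b\,P^{\,}_x(H_A < \infty)$, so on a slightly smaller ball $\tilde B(r/2)$ this shift is bounded below by a constant multiple of $b \asymp a$; combined with the isomorphism \eqref{eq:isom} (comparing $\{\varphi \geq -\sqrt{2u}\}$ to $\mathcal{C}_u$) and the local uniqueness estimate \eqref{eq:boundlocaluniq} for the interlacement $\mathcal{I}^u$ with $u \asymp a^2$ (noting $ur^{c} \asymp a^2 n^{c/\alpha} \geq C$ since $|a|>1$ and $n \geq 1$, possibly after enlarging $r$ by a constant), one shows that the shifted field has a connected cluster in $\tilde B(r/2)$ of volume $\geq cn$ containing the origin with probability bounded below by a constant $c_0 > 0$. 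Plugging $\P_{b,A}(E) \geq c_0$ and $\mathrm{cap}(A) \leq Cr^\nu \leq C'n^{\nu/\alpha}$ (by \eqref{eq:capBallBd} and the choice of $r$, using $\mathrm{cap}(K') \le C|\partial K'|$ as in Lemma~\ref{L:cap-LB}) into \eqref{eq:entropy} yields $\P(E) \geq c_0 \exp\{-C(b^2 r^\nu + 1)/(2c_0)\} \geq \exp\{-C'a^2 n^{\nu/\alpha}\}$, which is the claimed lower bound.

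The \textbf{main obstacle} is the second part of the lower bound, specifically engineering the event $E$ so that the tilted field genuinely has a \emph{connected} cluster of the origin of volume of order $n$ with probability bounded below by a \emph{dimension-free} constant. The shift alone makes the field positive on $\tilde B(r/2)$ only with probability $\asymp e^{-ca^2r^\nu}$, which would double the entropy cost; the point of routing through the isomorphism and interlacements is precisely to avoid this — one only needs the shifted field to dominate the sign clusters hit by $\mathcal{I}^u$, and local uniqueness \eqref{eq:boundlocaluniq} glues these into a single macroscopic cluster at cost $O(1)$ in probability. Making the bookkeeping precise — ensuring the origin lies in this cluster, that the cluster stays inside $\tilde B(r/2)$ (or a box where \eqref{eq:boundlocaluniq} applies with $x$ far from $K=\emptyset$, which here is automatic), and that the resulting volume is indeed $\geq cn$ rather than merely the number of points on the interlacement trace — is the delicate part, and it is exactly the mechanism that the more elaborate Proposition~\ref{pro:manyMeso} will refine; here one can afford the cruder version since no sharp control on the constant in the exponential is required.
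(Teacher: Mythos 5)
Your upper bound is exactly the paper's argument: on $\{n \leq |\mathcal{K}^a|<\infty\}$ one has $\mathrm{cap}(\mathcal{K}^a)\geq c n^{\nu/\alpha}$ by \eqref{eq:cap-LB}, and then \eqref{eq:capTail} with $K=\emptyset$ finishes it. For $|a|>1$ your lower bound also follows the paper's mechanism (interlacements plus local uniqueness \eqref{eq:boundlocaluniq}, the isomorphism \eqref{eq:isom}, then the entropy bound \eqref{eq:entropy} on a ball of radius $r\asymp n^{1/\alpha}$), and the details you flag as delicate are indeed the ones the paper carries out.

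There is, however, a genuine gap in your treatment of $|a|\leq 1$: you invoke Theorem~\ref{thm:volLB}, but that theorem carries the extra hypothesis $\sup q<\infty$, which Proposition~\ref{pro:easyoffcriticalbounds} does not assume -- the proposition is meant to hold on any graph satisfying \eqref{eq:ellipticity}, \eqref{eq:intro_sizeball} and \eqref{eq:intro_Green} (e.g.\ $\Z^d$ for $d\geq 7$, where $\sup q=\infty$). So your proof does not give the stated generality; it is also structurally backwards, since the proposition is an elementary preliminary while Theorem~\ref{thm:volLB} is the paper's main (and much harder) result. The reason you were forced into this split is your choice $u\asymp a^2$, which kills local uniqueness as $a\to 0$; the paper's fix is to keep the interlacement level fixed at $u=1/2$ and tilt by $b=a+1$, so that the base event $\{|\K^{-1}_{Cr}|\geq c r^{\alpha}\}$ has probability bounded below uniformly in $a$, and the entropy cost $\exp\{-C(a+1)^2 r^{\nu}\}$ gives $\exp\{-C(a^2\vee 1)n^{\nu/\alpha}\}$ for all $a\geq 0$ at once, with no case distinction and no appeal to $q$. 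A second omission: you never reduce to $a\geq 0$. For $a<-1$ your tilting produces a large cluster but says nothing about the truncation $|\mathcal{K}^a|<\infty$ (which is essential in the supercritical phase); the paper handles all negative $a$ at the outset via the identity in law of $|\K^{-a}|\cdot 1\{|\K^{-a}|<\infty\}$ and $|\K^{a}|\cdot 1\{|\K^{a}|<\infty\}$ from \cite[Lemma 4.3]{DrePreRod3}, after which the truncation is automatic and the local event $\{|\K^a_{Cr}|\geq n\}$ is contained in $\{n\leq|\K^a|<\infty\}$.
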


\begin{proof}
We start with the proof of the upper bound. Combining \eqref{eq:cap-LB} and the fact that $\text{cap}(K)< \infty$ whenever $|K|< \infty$, which is a plain consequence of the definition of $\text{cap}(\cdot)$, it follows that the event $\{ n \leq |\mathcal{K}^a| < \infty\}$ is contained in $\{ cn^{\nu/\alpha} \leq \mathrm{cap}(\mathcal{K}^a) < \infty\}$. Hence, the second inequality in  \eqref{eq:easyoffcriticalbounds} now immediately follows from the upper bound on the tail of the capacity provided by \eqref{eq:capTail} for $K=\emptyset$ in combination with our assumption \eqref{eq:intro_Green}. 

Let us now turn to the lower bound. Due to the discussion around \eqref{eq:isom}, one knows that the isomorphism (Isom) in the terminology of \cite[Section 1]{DrePreRod3} holds under our assumptions on $\G$, or equivalently, that (Isom') holds, see \cite[(3.14)]{DrePreRod3}.  As a consequence, \cite[Lemma 4.3]{DrePreRod3} is in force and implies that for any $a > 0$ the random variables  $ |\K^{-a}| \cdot 1\{ | \K^{-a}| < \infty\}$ and $ |\K^a| \cdot 1\{ | \K^{a}| < \infty\} $ have the same law under $\P$. Throughout the remainder of the proof we  will therefore assume without loss of generality that 
\begin{equation}\label{eq:apos}
a \geq 0.
\end{equation}
Since the truncation $|\K^{a}| < \infty$ is obsolete for such $a$ (the event holds $\P$-a.s.), our task is reduced to proving that $ \P( |\mathcal K^a| \geq n)$ is bounded from below by the first term in \eqref{eq:easyoffcriticalbounds}. 

Letting $N_r=|\I^{1/2}\cap B(r)|,$ one has that $\OE[N_r]\geq cr^{\alpha}$ by \eqref{eq:intro_sizeball}, \eqref{eq:lambdabounded} and  \eqref{eq:defIu} since $\mathrm{cap}(\{x\})=1/g(x,x)\geq c$ by \eqref{eq:intro_Green}. Moreover, $\OE[N_r^2]\leq Cr^{2\alpha}$ by \eqref{eq:intro_sizeball} and \eqref{eq:lambdabounded}, and it thus follows from the Paley--Zygmund inequality that $\OP(N_r\geq cr^{\alpha})\geq c'$. Combining this with the FKG inequality and \eqref{eq:defIu}, we deduce that $\OP(N_r\geq cr^{\alpha},0\in{\I^{1/2}})\geq c'$. On the intersection of the event $\{N_r\geq cr^{\alpha},0\in{\I^{1/2}}\}$ and the local uniqueness event $\textnormal{LocUniq}_{1/2,r}(x)$, which occurs with probability at least $1-c'/2$ if $r\geq C$ by \eqref{eq:boundlocaluniq}, the connected component of $0$ in  $\I^{ 1/2}\cap B(\Cr{C:localuniq}r)$ contains at least $cr^{\alpha}$ points. Recall the definition of the cluster $\K^{-1}_r$ of $0$ in $\tilde{B}(r)$ from below \eqref{eq:Kr}, and of $\mathcal{C}_{1/2}$ from above \eqref{eq:isom}. Since $\I^{ 1/2 }\subset\mathcal{C}_{ 1/2 }$ by definition, it follows from the isomorphism  \eqref{eq:isom} with $u=1/2$ that for all $r\geq C$,
\begin{equation}
\label{eq:simpleK-1}
\begin{split} 
\P(|\K^{-1}_{\Cr{C:localuniq}r}|\geq \Cl[c]{c:choicen2}r^{\alpha})\geq c,
\end{split}
\end{equation}
for suitable constants $\Cr{c:choicen2},c\in{(0,\infty)}$. Applying the entropy bound \eqref{eq:entropy} with $K=\emptyset$, $b=a+1$, $A=B({\Cr{C:localuniq}r})$ and $E=\{|\K^a_{\Cr{C:localuniq}r}|\geq n\}$ we thus deduce that for all  for all $a$ as in \eqref{eq:apos}  and $n\geq C$, letting $r=(n/\Cr{c:choicen2})^{1/\alpha}$, 
\begin{equation*}
\begin{split} 
    \P\big(|\K^a_{\Cr{C:localuniq}r}|\geq n\big)
    &\ge\P(|\K^{-1}_{\Cr{C:localuniq}r}|\geq \Cr{c:choicen2}r^{\alpha})
    \cdot \exp \bigg\{ - \frac{(a+1)^2 {\rm cap}(B(\Cr{C:localuniq}r)) + 2/{\rm e}}{2\P(|\K^{-1}_{\Cr{C:localuniq}r}|\geq \Cr{c:choicen2}r^{\alpha})} \bigg\}
    \\&\geq c\exp\{-C(a\vee1)^2r^{\nu}\}\geq c\exp \big \{-C(a\vee1)^2n^{\frac\nu\alpha} \big \},
\end{split}
\end{equation*} 
where we used \eqref{eq:simpleK-1} and \eqref{eq:capBallBd} in the second inequality.
The lower bound in \eqref{eq:easyoffcriticalbounds} for $n\geq C$ is then a direct consequence of the inclusion $\K^{a}_{\Cr{C:localuniq}r}\subset\K^{a}$. The case $1\leq n< C$ is readily taken care of by adapting the constant $C$ appearing in the lower bound of \eqref{eq:easyoffcriticalbounds}.
\end{proof}

Let us conclude this section by explaining how we will improve the proof of the lower bound in \eqref{eq:easyoffcriticalbounds} in the next two sections to obtain the better lower bound from \eqref{eq:tailEstVolgen} for small values of $a>0$. The main improvement is that we will use the change of measure formula \eqref{eq:entropy} with $b=a$ instead of $b=1$. This requires obtaining good bounds on $|\K^{-a}_r|$ for small values of $a>0$,  which are explicit in $a$, instead of our simple bound \eqref{eq:simpleK-1} on $|\K^{-1}_r|$. A first step in this direction will be to obtain good bounds on the volume $\LV_r^a$ of the largest cluster in a ball of size $r$ at level $-a$, which is the main content of Section~\ref{sec:volneg}, see in particular Proposition~\ref{pro:manyMeso}.  These findings are also at the root of our results concerning $\LV_r^a$ itself, cf.~\eqref{eq:critlargestvol-Zd2} and Theorem~\ref{the:mainMra}. 

The next step would in principle be to bound from below the probability that $0$ is connected to this largest cluster, so as to make $|\K_r^{-a}|$ large. However, this probability will depend on $a$ and $r$ (since it converges to $0$ as $a\rightarrow0$  and $r \to \infty$), and this dependence would appear unfavorably in the exponential of the change of measure \eqref{eq:entropy}. We solve this problem in Section~\ref{sec:den}, see in particular \eqref{eq:LBcond0}, \eqref{eq:AKary2}, \eqref{eq:clustCardLB} and Lemma~\ref{lem:boundParF}, by following ideas from \cite[Section~6]{DrePreRod5}.  Rather than connecting to $0$ directly, a critical cost will be incurred for the cluster of $0$ to reach a desired capacity within a certain linear size (its realization will play the role of $K$ in \eqref{eq:K-ass}), thus rendering the desired connection cost-efficient.

Then, in order to connect this cluster of $0$ to the largest cluster in $B(r)$ at level $-a$, we will use a random interlacement trajectory, and thus in view of \eqref{eq:defIu}, we will not only need that this cluster has a large volume, but also a large capacity, see \eqref{eq:ALBsplit} for details. Note that this is in fact the case for the cluster of random interlacements constructed in the proof of Proposition~\ref{pro:easyoffcriticalbounds}, by combining \eqref{eq:Iu-capbound} and the FKG inequality, and this large capacity carries over to $\K^{a}_{\Cr{C:localuniq}r} (\subset\K^{a})$ after application of the isomorphism and change of measure.

\section{Volume above negative levels}
\label{sec:volneg}

Recall from the beginning of  Section~\ref{sec:pre} that we always tacitly work under the assumption that $(\G,\lambda)$ satisfies
\eqref{eq:ellipticity}, \eqref{eq:intro_sizeball} and \eqref{eq:intro_Green}; see also Section~\ref{sec:mainresults} for notation.  For technical reasons that will become apparent in the next section (cf.~also the discussion at the end of the previous section), we work in a more general setup, i.e.~we consider the free field (with law $\PK$) on the cable system $\tilde{\G}_K$ instead of $\tilde{\G}$, with $K \subset \tilde{\G}$ as in \eqref{eq:K-ass}. This corresponds to the graph on which the associated diffusion is killed on hitting $K$, see the beginning of Section~\ref{sec:pre} for details.

In this section we focus on ${\K}_{\tilde B}^{-a}(y)$, the cluster of $y$ in $\tilde B \cap \{ \varphi \geq -a\}$ for (small) $a>0$,  see \eqref{eq:Kr} for notation.
The main result of this section is that there exists $y\in\widetilde{B}\cap G$ with $\widetilde{B}=\widetilde{B}(x,r)$ such that both $|{\K}_{\tilde B}^{-a}(y)|$ and $\mathrm{cap}_{\tilde{\G}_K}\big({\K}^{-a}_{\tilde B}(y)\big)$, see \eqref{eq:vol} and \eqref{eq:cap} for notation, are large with high enough probability, even for small values of $a$. Recall the definition of $q(\cdot)$ from \eqref{eq:qPsiBd}.

\begin{Prop}\label{pro:manyMeso} There exist constants  $\Cl[c]{c:q}$, $\Cl[c]{c:LBhatK}$, $c \in (0,1)$ and $\Cl{C:LBLneg},\Cl{C:dxK}<\infty$ such that for all $K\subset \tilde{B}(r)$ satisfying \eqref{eq:K-ass}, and all $a\in{(0,1]}$, $r\geq  C $ and $x\in G_K$ with $rq(\Cr{c:q}r)^{-\frac2\nu}\geq \Cr{C:LBLneg}a^{-2/\nu}$, $d(x,K)\geq \Cr{C:dxK}r$ and $\tilde B =\tilde B(x,r)\subset\tilde{\G}_K$, 
the following hold:
\begin{equation} \label{eq:LBL2} 
        \PK \bigg(\exists\, y\in B(x,r):\, \big| {\K}^{-a}_{\tilde B}(y)\big| \ge \frac{\Cr{c:LBhatK} a r^\alpha}{q(\Cr{c:q}r)^{2}}, \, \mathrm{cap}_{\tilde{\G}_K}\big({\K}^{-a}_{\tilde B}(y)\big)\geq \frac{cr^{\nu}}{q(\Cr{c:q}r)^{2}} \bigg) \ge 
    cq(\Cr{c:q}r)^{-4}.
\end{equation}
\end{Prop}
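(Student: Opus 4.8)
The plan is to build the large cluster of interest as the union of macroscopic sign clusters in $\tilde B(x,r)$ that are glued together by an independent random interlacement at a well-chosen intensity, and then to transfer this back to the Gaussian free field via the isomorphism \eqref{eq:isom}. First I would fix the intensity $u = u(r) \asymp r^{-\nu} q(\Cr{c:q}r)^{2}$ for a suitable constant $\Cr{c:q}$; the hypothesis $r q(\Cr{c:q}r)^{-2/\nu} \geq \Cr{C:LBLneg} a^{-2/\nu}$ is exactly what guarantees $\sqrt{2u} \leq a$, so that the isomorphism can ultimately be run at level $-a$ rather than $-\sqrt{2u}$. On $\tilde\G_K$ with $d(x,K) \geq \Cr{C:dxK} r$ (so that the relevant capacities in $\tilde\G_K$ and $\tilde\G$ are comparable up to constants by \eqref{eq:capBallBd}), I would work under $\OPK \otimes \PK$ and consider $\mathcal{C}_u$, the closure of the union of the clusters of $\{|\varphi|>0\}$ meeting $\I^u$, restricted to $\tilde B(x,r)$.

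The heart of the argument is a two-sided estimate on the volume and capacity of the connected cluster formed inside $\tilde B(x,r)$. For the lower bound on volume: by the local uniqueness event $\mathrm{LocUniq}_{u,r}(x)$ of \eqref{eq:lu-def}, which holds with probability at least $1-\eps$ as soon as $u r^{\Cr{C:LBLuniq2}} \geq C(\eps)$ — and this is where the precise form of $u$ and the constant $\Cr{c:q}$ must be tuned, using \eqref{eq:c4explicit} in the regime $\alpha > 2\nu$ to get $\Cr{C:LBLuniq2}=\nu$ — all points of $\I^u \cap \tilde B(x,r)$ lie in a single cluster of $\I^u \cap \tilde B(x, \Cr{C:localuniq} r)$. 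A first-moment plus Paley–Zygmund computation as in the proof of Proposition~\ref{pro:easyoffcriticalbounds} gives $|\I^u \cap B(x,r)| \geq c\, u\, r^{\alpha}$ with probability bounded below; note $u r^\alpha \asymp a r^\alpha q(\Cr{c:q}r)^{2}/ (a r^\nu \cdot \text{const})$… more carefully, $u r^\alpha = r^{\alpha-\nu} q(\Cr{c:q}r)^2 \cdot\text{const}$, and since $r^{\alpha-\nu}\geq a r^\alpha q(\Cr{c:q}r)^{-2}$ is equivalent (up to constants) to $r^{-\nu} q(\Cr{c:q}r)^2 \geq a$, i.e. $\sqrt{2u}\gtrsim a$, the hypothesis yields $u r^\alpha \geq c\, a r^\alpha q(\Cr{c:q}r)^{-2}$, which is the claimed volume scale $\Cr{c:LBhatK} a r^\alpha q(\Cr{c:q}r)^{-2}$ (after possibly enlarging $\Cr{C:LBLneg}$ so the two quantities are genuinely comparable). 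For the capacity: \eqref{eq:Iu-capbound} gives $\mathrm{cap}_{\tilde\G_K}(\I^u \cap B(x,r)) \geq c r^\nu$ with probability $\geq 1-\eps$ when $u r^{\Cr{c:capline}} \geq C(\eps)$; since this giant cluster of $\I^u$ contains (on the local-uniqueness event) all of $\I^u \cap B(x,r)$, its capacity is at least $c r^\nu$, and since $u \asymp r^{-\nu}q(\Cr{c:q}r)^2$ is bounded below (when $q$ is bounded) this is of order $r^\nu \geq c r^\nu q(\Cr{c:q}r)^{-2}$, matching the claim. Combining the three events via the FKG inequality for interlacements (and including $0$… rather, including a fixed reference point, though here we only need existence of \emph{some} $y$), one gets a single connected set $D \subset \I^u \cap \tilde B(x,\Cr{C:localuniq}r)$ with $|D| \gtrsim a r^\alpha q(\Cr{c:q}r)^{-2}$ and $\mathrm{cap}_{\tilde\G_K}(D) \gtrsim r^\nu q(\Cr{c:q}r)^{-2}$, with probability $\geq c$ (not yet $c q^{-4}$).

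To obtain the stated probability $c\, q(\Cr{c:q}r)^{-4}$ rather than a constant, one should \emph{not} use a bare event of probability $c$: the point is that $\mathcal{C}_u$ contains, beyond $\I^u$ itself, the sign clusters of $\{|\varphi|>0\}$ attached to it, and to control $|\K^{-a}_{\tilde B}(y)|$ from \emph{above} (implicitly needed so that the statement is about a specific cluster of volume of the stated \emph{order}, not larger) as well as to pass from $\mathcal C_u$ on $\tilde\G_K$ to $\{\varphi \geq -a\}$ one invokes \eqref{eq:isom}: under $\OPK \otimes \PK$, $(\mathcal C_u, (\varphi_\cdot - \sqrt{2u}))$ is stochastically dominated by $(\{\varphi \geq -\sqrt{2u}\}, \varphi_\cdot)$, hence by $(\{\varphi\geq -a\},\varphi_\cdot)$ since $\sqrt{2u}\leq a$. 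Therefore on the above good event there exists $y \in B(x,r)$ with $\K^{-a}_{\tilde B}(y) \supset D$ (after shifting), which has the two desired properties; the probability of the event on the left of \eqref{eq:LBL2} is at least that of the good event under $\OPK\otimes\PK$, which is $\geq c$. The extra $q(\Cr{c:q}r)^{-4}$ factor I expect arises because the relevant volume/capacity \emph{after} subtracting the Gaussian shift and intersecting with the finite box can only be guaranteed on a further subevent — essentially, one needs the attached sign clusters not to overshoot, which costs a factor controlled by $q^2$ per "endpoint" of the interlacement-driven connection, hence $q^{-4}$ overall; alternatively it is the price for $\mathrm{cap}$ and $|\cdot|$ of the \emph{same} cluster to be simultaneously of the stated size after the isomorphism's one-sided domination, which loses $q^{-2}$ for each. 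The main obstacle is precisely this bookkeeping: tracking how the one-sided domination in \eqref{eq:isom} degrades the lower bounds on volume and capacity of a \emph{single} cluster (rather than of the interlacement set, which is clean), and showing the loss is no worse than $q(\Cr{c:q}r)^{4}$; this is where the sharp one-arm input $\sup q < \infty$ via \eqref{eq:boundq-new} and the definition \eqref{eq:qPsiBd} of $q$ enter crucially, and it is the delicate part of the proof.
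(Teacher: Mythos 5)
There is a genuine gap, and it is quantitative as well as structural. Your construction takes the interlacement at intensity $u\asymp r^{-\nu}q(\Cr{c:q}r)^2$ and counts the points of $\I^u\cap B(x,r)$, which gives volume of order $ur^\alpha\asymp r^{\alpha-\nu}q(\Cr{c:q}r)^2$. The target volume in \eqref{eq:LBL2} is $c\,a r^\alpha q(\Cr{c:q}r)^{-2}$, and your claim that the hypothesis yields $ur^\alpha\geq c\,ar^\alpha q^{-2}$ is backwards: the standing assumption $rq(\Cr{c:q}r)^{-2/\nu}\geq \Cr{C:LBLneg}a^{-2/\nu}$ says $a\geq c\,r^{-\nu/2}q(\Cr{c:q}r)$, i.e.\ $\sqrt{2u}\lesssim a$, whereas $ur^\alpha\geq c\,ar^\alpha q^{-2}$ would require $a\lesssim r^{-\nu}q^4$ — so for most admissible pairs $(a,r)$ (e.g.\ $a$ of order $1$ and $r$ large) the interlacement alone has far too few points. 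The actual proof takes $u=a^2/2$ (so that the isomorphism \eqref{eq:isom} lands exactly at level $-a$) and gets the volume not from $\I^u$ but from the \emph{critical sign clusters glued onto it}: by a second-moment argument (Lemma~\ref{lem:manyPtsInTypCl}) there are $\gtrsim r^\alpha u^{1/2}$ points $y$ whose clusters $\K^0_{\tilde B}(y)$ have capacity $\geq u^{-1}$, each such cluster is hit by a trajectory with probability $\gtrsim q^{-2}$, and counting the hit ones gives the order $\sqrt{u}\,r^\alpha q^{-2}=a r^\alpha q^{-2}$; the $q^{-4}$ probability then falls out of Paley--Zygmund applied to this count (first moment $\asymp\sqrt u\,r^\alpha q^{-2}$, second moment $\lesssim u r^{2\alpha}$), not from the bookkeeping you speculate about (a constant lower bound, were it available, would in fact be stronger than $cq^{-4}$ since $q\geq c$).

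The second problem is your reliance on local uniqueness \eqref{eq:boundlocaluniq} and on \eqref{eq:Iu-capbound}. These require $ur^{\Cr{C:LBLuniq2}}\geq C(\eps)$, and the exponent $\Cr{C:LBLuniq2}$ equals $\nu$ only when $\alpha>2\nu$ (cf.~\eqref{eq:c4explicit}); in general it is an uncontrolled (possibly small) constant, so with $u\asymp r^{-\nu}q^2$ the requirement can fail for \emph{all} large $r$, and with $u\asymp a^2$ it forces $r\geq Ca^{-C'}$ for a large unspecified $C'$. This is precisely the dichotomy recorded in Remark~\ref{rk:endsection4}: the route you propose proves only the weaker statement \eqref{eq:LBL4} under $r\geq Ca^{-C'}$, or works under the stated hypothesis only in the regime $\alpha>2\nu$ — which excludes $\Z^4$ and $\Z^5$, the central cases of the paper. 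The proof of Proposition~\ref{pro:manyMeso} avoids local uniqueness altogether by designating the critical cluster $\hat\K^0_{\tilde B}$ of maximal capacity as a hub, keeping only the (forward parts of) trajectories through it, stopped upon exiting $\tilde B(x,\Cr{C:exit}r)$ (the set $\hat{\I}^u_{\tilde B}$ of \eqref{eq:Ihat}), so that the union with the attached sign clusters is connected by construction; connectivity is then free and no lower bound on $u$ in terms of a power of $r$ is ever needed beyond $u^{-1}\leq s_{r/2}$, which is exactly what the hypothesis on $r$ provides.
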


The bound \eqref{eq:LBL2} is mainly interesting when $\sup q = \sup_r q(r) < \infty$, which is for instance the case on $\Z^d$, $d\in\{3,4,5\}$, cf.~\eqref{eq:boundq-new}, and will be an essential ingredient in the proofs of both Theorem~\ref{thm:volLB} and Theorem~\ref{the:mainMra}.  Note in particular that in this case \eqref{eq:tailEstlargestVolgen} is a direct consequence of \eqref{eq:LBL2}  $a<0$ and $r\geq C|a|^{-2/\nu}$. When the assumption $\sup q<\infty$  is not satisfied, one can actually still obtain a result similar to \eqref{eq:LBL2} with $q$ replaced by $1$ therein, but only under the additional assumption $r\geq a^{-C}$ for a large constant $C$ (and in particular a priori larger than $2/\nu$), see Remark~\ref{rk:endsection4},\ref{rk:casesupq=infty}.

The rest of this section is dedicated to the proof of Proposition~\ref{pro:manyMeso}. We start by controlling the number of points contained in critical level sets of the Gaussian free field with large capacity in $\tilde B$. Recall that $\mathrm{cap}_{\tilde{\G}_K}$ denotes the capacity on the graph $\tilde{\G}_K$, see \eqref{eq:cap} (see also the discussion following \eqref{eq:K-ass} regarding the metric graph $\tilde{\G}_K$), and define the event
\begin{equation}
\label{eq:defD1}
D(y; x, r,s) \stackrel{\text{def.}}{=} 
\big \{\mathrm{cap}_{\tilde{\G}_{K}}\big({\K}^0_{\tilde B}(y)\big) \geq s \big\}, \quad \tilde B=\tilde B(x,r).
\end{equation}
If the remaining parameters $r,s$ are clear from the context, we simply write $D(y)$ instead of $D(y; x,r,s)$. 
Note that for $s>0$, as will always be the case below, $D(y; x,r,s)$ is empty unless $y \in \tilde B$.
For $x \in \widetilde \G$ and $r ,s \in (1,\infty)$ we further define the set 
\begin{equation}\label{eq:Fdef}
      \mathcal{D}(x,r,s) \stackrel{\text{def.}}{=} \big\{ y \in B(x,r) \, : \, D(y; x,r,s)\text{ occurs} \big\},
\end{equation}
 and then write
\begin{equation} \label{eq:Edef}
     \mathcal E (x,r,s) \stackrel{\text{def.}}{=} \big\{|\mathcal{D}(x,r,s)| \ge \tfrac12 \EK[|\mathcal{D}(x,r,s)|]\big\}.
\end{equation}

\begin{Lemme}\label{lem:manyPtsInTypCl}
For all $s\in (1,\infty)$,  $r\in{(2,\infty)}$ with
\begin{equation} \label{eq:rCond}
 s\leq  s_{{r}/{2}}, \quad \text{where } s_r \stackrel{\textnormal{def.}}{=} \Cr{C:lawcaploc}r^{\nu}q(r)^{-2}  \text{ (see Lemma~\ref{lem:capbox} for~$\Cr{C:lawcaploc}$),} 
\end{equation}
and for all  $K\subset\tilde{\G}$ and $x\in{G_K}$ satisfying  \eqref{eq:K-ass} and $\tilde{B}(x,r)\subset \tilde{\G}_K$, one has
\begin{equation}
\label{eq:secondmoment}
     \EK \big[| \mathcal{D}(x,r,s)|\big]\geq c r^{\alpha}s^{-\frac12}\text{ and }\EK \big[| \mathcal{D}(x,r,s)|^2\big]  \le C r^{2\alpha}s^{-1}.
\end{equation}
In particular,
\begin{equation} \label{eq:ELB}
   \PK \big(\mathcal E(x,r,s )\big) \ge c.
\end{equation}

\end{Lemme}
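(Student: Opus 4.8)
The plan is to prove Lemma~\ref{lem:manyPtsInTypCl} via a standard first- and second-moment argument for the random set $\mathcal{D}(x,r,s)$, followed by the Paley--Zygmund inequality to deduce \eqref{eq:ELB}.

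\textbf{First moment.} For the lower bound on $\EK[|\mathcal{D}(x,r,s)|]$, I would write
\[
\EK\big[|\mathcal{D}(x,r,s)|\big] = \sum_{y\in B(x,r)} \PK\big(D(y;x,r,s)\big) = \sum_{y\in B(x,r)} \PK\big(\mathrm{cap}_{\tilde{\G}_K}(\K^0_{\tilde B}(y))\geq s\big),
\]
and then apply Lemma~\ref{lem:capbox} (with $a=0$) on a slightly smaller ball to each term. The point is that for $y$ in the interior ball $B(x,r/2)$, one has $\tilde{B}(y,r/2)\subset \tilde{B}(x,r)=\tilde B$, and $\K^0_{\tilde{B}(y,r/2)}(y)\subset \K^0_{\tilde B}(y)$, so $\mathrm{cap}_{\tilde{\G}_K}(\K^0_{\tilde B}(y))\geq \mathrm{cap}_{\tilde{\G}_K}(\K^0_{\tilde{B}(y,r/2)}(y))$ by monotonicity of capacity. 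The hypothesis $s\leq s_{r/2}=\Cr{C:lawcaploc}(r/2)^\nu q(r/2)^{-2}$ is exactly what is needed to invoke \eqref{eq:clustCapLocal} (with radius $r/2$, $a=0$), giving $\PK(D(y))\geq c s^{-1/2}$ for each such $y$. Summing over $y\in B(x,r/2)$ and using $|B(x,r/2)|\geq c\lambda(B(x,r/2))\geq cr^\alpha$ via \eqref{eq:intro_sizeball} and \eqref{eq:lambdabounded} yields $\EK[|\mathcal{D}(x,r,s)|]\geq c r^\alpha s^{-1/2}$. (The condition $d(x,K)$ large, needed for Lemma~\ref{lem:capbox} to apply via $\tilde B\cap K=\emptyset$, is guaranteed here by the hypothesis $\tilde{B}(x,r)\subset\tilde{\G}_K$.)

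\textbf{Second moment.} For the upper bound on $\EK[|\mathcal{D}(x,r,s)|^2]$, expand
\[
\EK\big[|\mathcal{D}(x,r,s)|^2\big] = \sum_{y,z\in B(x,r)} \PK\big(D(y)\cap D(z)\big).
\]
Here the key observation is that on $D(y)\cap D(z)$, either the clusters $\K^0_{\tilde B}(y)$ and $\K^0_{\tilde B}(z)$ coincide (when $y\leftrightarrow z$ in $\{\varphi\geq0\}\cap\tilde B$), or they are disjoint; in either case one gets a useful bound. One clean way: note $D(y)\cap D(z)\subset D(y)\cap\{z\in\K^0_{\tilde B}(y)\}$ on the event $\{y\leftrightarrow z\}$, plus the complementary contribution where $z\notin\K^0_{\tilde B}(y)$. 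Summing $\sum_z \1\{z\in\K^0_{\tilde B}(y)\} = |\K^0_{\tilde B}(y)|$, and using the bound $|\K^0_{\tilde B}(y)|\leq C\,\mathrm{cap}(\K^0_{\tilde B}(y))\cdot$(something)? — no, that goes the wrong way. Instead I would bound $|\K^0_{\tilde B}(y)|$ in expectation: by \eqref{eq:averagecriticalsize} (as in the proof of Proposition~\ref{pro:upperMRa}), $\EK[|\K^0_{\tilde B}(y)|]\leq Cr^{\alpha-\nu}$. More precisely, writing $\mathcal{D}=\mathcal{D}(x,r,s)$,
\[
|\mathcal{D}|^2 = \Big(\sum_{y\in\mathcal{D}}1\Big)^2 \leq \sum_{y\in\mathcal{D}}\Big(|\K^0_{\tilde B}(y)| + \big|\{z\in B(x,r): z\notin\K^0_{\tilde B}(y)\}\big|\Big),
\]
which is too crude. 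The efficient route is to condition: $\PK(D(y)\cap D(z))\leq \PK(D(z))\cdot\PK(D(y)\mid D(z))$ and control the conditional probability using the Markov property \eqref{eq:Markov2} applied to $\K=\K^0_{\tilde B}(z)$ on $D(z)$, noting that on $D(z)$ the shifted field only increases connectivity, so $\PK(D(y)\mid\mathcal{A}^+_{\K^0_{\tilde B}(z)})\leq \PK_{\text{on }\tilde\G_{K\cup\K^0_{\tilde B}(z)}}(\mathrm{cap}(\K^0(y))\geq s) + \PK(z\in\K^0_{\tilde B}(y))$ — and the first term is again $\leq C s^{-1/2}$ by \eqref{eq:capTail} (monotonicity under adding $K$), giving roughly $\PK(D(y)\cap D(z))\leq Cs^{-1}$ plus a correction $\PK(D(z), z\leftrightarrow y)$ whose sum over $y$ is $\leq \EK[|\K^0_{\tilde B}(z)|\1_{D(z)}]\leq Cr^{\alpha-\nu}$. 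Summing over $y,z\in B(x,r)$ then gives $\EK[|\mathcal{D}|^2]\leq Cr^{2\alpha}s^{-1} + Cr^{\alpha}\cdot r^{\alpha-\nu}\leq Cr^{2\alpha}s^{-1}$, where the last step uses $s\leq s_{r/2}\leq Cr^\nu$.

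\textbf{Conclusion via Paley--Zygmund.} Given \eqref{eq:secondmoment}, the Paley--Zygmund inequality gives
\[
\PK\big(|\mathcal{D}|\geq \tfrac12\EK[|\mathcal{D}|]\big) \geq \frac{1}{4}\cdot\frac{\EK[|\mathcal{D}|]^2}{\EK[|\mathcal{D}|^2]} \geq \frac{1}{4}\cdot\frac{c^2 r^{2\alpha}s^{-1}}{Cr^{2\alpha}s^{-1}} = c > 0,
\]
which is precisely \eqref{eq:ELB}.

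\textbf{Main obstacle.} The routine part is the first moment and Paley--Zygmund; the delicate step is the second-moment bound, specifically controlling $\PK(D(y)\cap D(z))$ when $y$ and $z$ are close. One must avoid double-counting the contribution of pairs lying in a common cluster, and the cleanest mechanism for decoupling is the strong Markov property \eqref{eq:Markov2} together with the stochastic-domination statement (used already in the proof of Lemma~\ref{lem:capbox}, via loop soups) ensuring that conditioning on one cluster having large capacity does not inflate the probability that a disjoint region also carries a large-capacity cluster. Getting the error term from "same cluster" pairs to be of lower order than $r^{2\alpha}s^{-1}$ — which is where the hypothesis $s\leq s_{r/2}$ (hence $s\leq Cr^\nu$, so $r^{2\alpha-\nu}\leq r^{2\alpha}s^{-1}$) is used — is the crux of the argument.
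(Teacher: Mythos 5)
Your first-moment argument and the Paley--Zygmund conclusion are exactly the paper's, and your overall second-moment strategy (decouple via the strong Markov property, use the capacity tail \eqref{eq:capTail}, absorb the ``same cluster'' pairs using $s\leq Cr^\nu$) is also the paper's. However, the decoupling step as you wrote it has a genuine gap, in fact two. First, you condition on the \emph{box-truncated} cluster $\K^0_{\tilde B}(z)$. Its boundary consists not only of points where $\varphi=0$ but also of points on $\partial\tilde B(x,r)$ where $\varphi\geq 0$; the harmonic extension of these boundary values is nonnegative, so under $\PK(\cdot\mid \mathcal{A}^+_{\K^0_{\tilde B}(z)})$ the field \emph{stochastically dominates} the GFF on the killed graph. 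You even note that the shift ``only increases connectivity'' -- but that is precisely why you cannot bound the conditional probability of $D(y)$ from above by the killed-graph probability plus a connection term: the inequality goes the wrong way. The paper avoids this by first replacing $\K^0_{\tilde B}(\cdot)$ by the full clusters $\K^0(\cdot)$ (monotonicity only enlarges the events $D$), so that the conditioned-on set is a genuine sign cluster with boundary data identically $0$ and the conditional law is an \emph{unshifted} GFF on $\tilde{\G}_{K\cup\K^0(y)}$.

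Second, even after that repair, the bound ``$\leq Cs^{-1/2}$'' for the conditional capacity tail is not uniform on the event you keep: the upper bound in \eqref{eq:capTail} is $C(g_{\tilde{\G}_{K'}}(y,y)\,s)^{-1/2}$ with $K'=K\cup\K^0(z)$, and if the cluster of $z$ intrudes into a cable adjacent to $y$ (without containing $y$, so your dichotomy $y\in\K^0_{\tilde B}(z)$ versus $y\notin\K^0_{\tilde B}(z)$ does not see it), then $g_{\tilde{\G}_{K'}}(y,y)$ can be arbitrarily small and the constant blows up. The paper therefore enlarges the ``bad'' event to $\{d_{\mathrm{gr}}(z,\K^0(y))\leq 1\}$, bounds its probability by the two-point function of the neighbors of $z$ (giving $Cd(y,z)^{-\nu}$, whose sum over the ball is $Cr^{\alpha-\nu}$, the same order as your correction term), and only applies \eqref{eq:capTail} when the graph distance exceeds $1$, which guarantees $g_{\tilde{\G}_{K'}}(y,y)\geq 1/\lambda_y\geq c$. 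A further point you pass over silently (the paper treats it via the events $A_y$, $A_z$) is that \eqref{eq:Markov2} and \eqref{eq:capTail} on the killed graph only speak about the \emph{unbounded} component of the complement of the conditioned cluster, so one must argue that one of the two disjoint clusters lies in the unbounded component of the complement of the other and condition on that one. All of these are fixable along the paper's lines, but as written the conditional estimate at the heart of your second-moment bound does not follow.
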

\begin{proof} Throughout the proof we tacitly assume that $x\in G \setminus {B(0,2r)}$, as appearing above \eqref{eq:secondmoment}. By application of Lemma~\ref{lem:capbox}, which is in force owing to the condition \eqref{eq:rCond}, one has that for all $y\in{B(x,  r/2 )}$, abbreviating $D(y)=D(y; x,r,s)$,
\begin{equation}\label{eq:clustCapAsymp} 
\begin{split}
\PK ( D(y) )
&\geq\PK\big( \mathrm{cap}_{\tilde{\G}_{K}}\big({\K}^0_{ \tilde B(y,r/2) }(y)\big)\geq s  \big)
\stackrel{\eqref{eq:clustCapLocal}}{\ge} cs^{-1/2}
\end{split}
\end{equation}
where the last equality holds true owing to \eqref{eq:rCond}, which implies that the condition needed for \eqref{eq:clustCapLocal} to apply is satisfied. We can therefore  use \eqref{eq:clustCapAsymp} to bound the first moment from below upon restricting the relevant sum to $ y \in{B(x, r/2 )}$ by
\begin{align} \label{eq:firstMom}
\begin{split}
\EK \big[| \mathcal{D}(x,r,s)|\big] 
&=  \sum_{y \in B(x,r) }\PK (D(y)) 
 \ge c r^{\alpha}s^{-1/2},
 \end{split}
\end{align}
where we used \eqref{eq:intro_sizeball} and \eqref{eq:lambdabounded} in the last inequality. This yields the first item in \eqref{eq:secondmoment}. 

With a view towards to the second moment, we start by considering $\PK(D(y), D(z))$ for $y,z \in B(x,r)$ with $y \neq z$. Let $d_{\text{gr}}$ denote the graph distance on $G$, and for $z\in{G}$ and $K'\subset \tilde{\G}$ we abbreviate $d_{\text{gr}}(z,K')=\inf_{y\in{K'\cap G}}d_{\text{gr}}(z,y)$. Using monotonicity of ${\K}^{0}_{\tilde B}(y)$ in $\tilde{B}$ to replace ${\K}^{0}_{\tilde B}(y)$ by $ {\K}^{0}(y) \equiv \K^0_{\tilde \G}(y)$, and with $\widehat{D}$ denoting the same event as in \eqref{eq:defD1} but with $\mathcal{K}^0(y)$ in place of ${\K}^{0}_{\tilde B}(y)$, we then have, conditioning on ${\mathcal K}^0(y)$ (see \eqref{eq:Markov1} for the relevant $\sigma$-algebra $\mathcal{A}_{{\mathcal K}^0(y)}^+$),
\begin{align} \label{eq:UBProbExc}
\begin{split}
 &\PK(D(y), D(z)) \leq \PK(\widehat{D}(y), \widehat{D}(z))
 = \EK \Big[1_{\widehat{D}(y)}\PK\big(\widehat{D}(z) \, | \, \mathcal{A}_{{\mathcal K}^0(y)}^+\big) \Big]\\
 &\le\PK\big(d_{\text{gr}}(z, {\mathcal K}^0(y))\leq  1\big) \\
 &\quad +
  \EK\Big[1\big\{d_{\text{gr}}(z, {\mathcal K}^0(y) )> 1,\, {\rm cap}_{\tilde{\G}_K}({\mathcal K}^0(y)) \ge s\big\} \PK\big({\rm cap}_{\tilde{\G}_K}({\mathcal K}^0(z)) \ge s  \, \big| \, \mathcal{A}_{{\mathcal K}^0(y)}^+\big) \Big].
\end{split}
\end{align}
Now, observe that for $z\neq y$
$$
\PK\big(d_{\text{gr}}(z, {\mathcal K}^0(y))\leq 1\big) \le \sum_{z' \sim z}\PK\big(z' {\longleftrightarrow} y\text{ in }\{\phi\geq0\}\big),
$$
which, due to \cite[Prop. 5.2]{MR3502602} and \eqref{eq:intro_Green}, is bounded from above by \( Cd(y,z)^{-\nu} \) (note that \eqref{eq:ellipticity} implies that $\G$ has bounded degree). To deal with the term in the last line of \eqref{eq:UBProbExc}, we
apply the strong Markov property for the Gaussian free field, see \eqref{eq:Markov2},  either for the set $\K^0(y)$ or $\K^0(z)$, as follows. On the
event $d_{\text{gr}}(z, {\mathcal K}^0(y))> 1$, one has that  $z\notin{\K^0(y)}$ and in fact that $\K^0(y)$ and $\K^0(z)$ are disjoint. Recall now that $\tilde \G_U$ corresponds to the \emph{unbounded
component} of $\tilde \G \setminus U$, cf.~below \eqref{eq:K-ass}. Note that, under $\PK$, either $z$ (hence $\K^0(z)$) belongs to the unbounded component of the set $\tilde{\G}_K \setminus \K^0(y)$ (call this event $A_z$), in which case
$\K^0(z) \subset \tilde{\G}_{K\cup \K_0(y)}$, or otherwise $A_y$ occurs (whence $\K^0(y) \subset \tilde{\G}_{K\cup \K_0(z)}$). We will assume that $A_z$ (which is $\mathcal{A}_{{\mathcal K}^0(y)}^+$-measurable) occurs in the sequel and condition on $\mathcal{A}_{\K^0(y)}^+$; if instead $A_y$ occurs one conditions on $\mathcal{A}_{{\mathcal K}^0(z)}^+$ instead and applies a similar reasoning as below.

Thus by \eqref{eq:Markov2}, on the event $z\notin{\K^0(y)}$ implied by $d_{\text{gr}}(z, {\mathcal K}^0(y))> 1$  and on $A_z$, since $h_{\K^0(y)}^{K}=0$ on $\tilde{\G}_K\setminus \K^0(y)$, the set $\K^0(z)$ has the same law under $\PK(\cdot\,|\, \mathcal{A}_{{\mathcal K}^0(y)}^+)$ as under $\PKK{K\cup{\K}^0(y)}$. Observing in addition that $\mathrm{cap}_{\tilde{\G}_{K'}}(A)\geq \mathrm{cap}_{\tilde{\G}_K}(A)$ if $K\subset K'$, we deduce that on the event $\{d_{\text{gr}}(z, {\mathcal K}^0(y) )> 1\} \cap A_z$,
 \begin{equation*}
    \PK\big({\rm cap}_{\tilde{\G}_K}({\mathcal K}^0(z)) \ge s  \, \big| \, \mathcal{A}_{{\mathcal K}^0(y)}^+\big) \leq \PKK{K\cup{\K}^0(y)}\big({\rm cap}_{\tilde{\G}_{K\cup{\K}^0(y)}}({\mathcal K}^0(z)) \ge s  \big)\leq Cs^{-\frac12},
 \end{equation*}
 where the last inequality follows from \eqref{eq:capTail} applied to the graph $\tilde{\G}_{K\cup{\K}^0(y)}$ and the inequalities $$g_{\tilde{\G}_{K\cup{\K}^0(y)}}(z,z)\geq 1/\lambda_z^{K\cup{\K}^0(y)}=1/\lambda_z\geq c$$ on the event $d_{\text{gr}}(z, {\mathcal K}^0(y) )> 1$, see  below \eqref{eq:K-ass} and \eqref{eq:lambdabounded} regarding $\lambda^{K\cup{\K}^0(y)}_{\cdot}$. Continuing with \eqref{eq:UBProbExc}, substituting the above estimates and applying \eqref{eq:capTail} once again, we obtain altogether that for $y,z \in B(x,r)$ with $y\neq z$,
\begin{align} \label{eq:2capLargeUB}
   \PK\big(D(y), D(z)\big)
    &\le C(d(y,z)\vee1)^{-\nu}+ C s^{-1}.
\end{align}
Summing \eqref{eq:2capLargeUB} over $y\neq z$ and using the trivial bound $\PK\big(D(y)\big)\leq 1$ thus yields that
\begin{equation} \label{eq:secMomDec}
\begin{split}
 \EK \Big[| \mathcal{D}(x,r,s)|^2\Big]  
&=  \sum_{\substack{y,z \in B(x,r)}}\PK\big(D(y), D(z)\big)
\\&\le C\sum_{y \in B(x,r)} \bigg(\black 1+\sum_{k =1}^{\lceil\log_2(r)\rceil}  |B(y,2^k) \setminus B(y,2^{k-1})| \cdot \big(2^{-k\nu} +  s^{-1}\big)\bigg)
\\&\le C\sum_{y \in B(x,r)} \bigg(\black 1+\sum_{k =1}^{\lceil\log_2(r)\rceil} 2^{\alpha k} \big(2^{-k\nu} + s^{-1} \big)\bigg)
\\&\le C \sum_{y \in B(x,r)} (r^{\alpha-\nu} + r^{\alpha} s^{-1}) \le C r^{2\alpha}s^{-1},
\end{split}
\end{equation}
where we used \eqref{eq:intro_sizeball} and \eqref{eq:lambdabounded} in the two last inequalities, as well as the assumption $\alpha\geq \nu+2$ (see below \eqref{eq:intro_Green}), and used the assumption \eqref{eq:rCond} in the last inequality (recall from \eqref{eq:boundq} that  $q(\cdot)\geq c$). 
This finishes the proof of \eqref{eq:secondmoment}, which directly yields \eqref{eq:ELB} by means of the Paley--Zygmund inequality.
\end{proof}

 In order to deduce Proposition~\ref{pro:manyMeso} from Lemma~\ref{lem:manyPtsInTypCl}, we will use the isomorphism with random interlacements stated in \eqref{eq:isom}, which becomes pertinent at negative levels $-a$ for a given $a >0$, with $\sqrt{2u}=a$. 

For a set $I \subset \tilde \G$ we denote those elements of $\mathcal{D}(x,r,s)$ whose sign clusters in $\tilde B=\tilde B(x,r)$ intersect  $I$ by
\begin{equation} \label{eq:signConn}
\mathcal H(x,r, s,I) \stackrel{\text{def.}}{=}  \big\{y \in \mathcal{D}(x,r, s) \, : \,  \K^0_{ \tilde B}(y) \cap I \ne \emptyset \big\}.
\end{equation}
We also denote by $\hat{\K}_{\tilde B}^{0}$ the set ${\K}_{\tilde B}^{0}(y)$ with the maximal capacity with respect to the graph $\G_K$ among $y\in{B(x,r)}$, and define under $\PK \otimes \OPK$ the set
\begin{equation}\label{eq:Ihat}
\hat{\I}^u_{\tilde B}= \left\{\text{\begin{minipage}{0.6\textwidth} \centering the union of the forward parts of random interlacement trajectories in $\I^u$ which intersect $\hat{\K}_{\tilde B}^{0}$, started when entering $\hat{\K}_{\tilde B}^{0}$ and killed when first exiting $\tilde{B}(x,\Cr{C:exit}r)$ \end{minipage}}\right\},
\end{equation}
(with $\Cr{C:exit}$ as introduced in \eqref{eq:hittingvscap}).

In view of the isomorphism \eqref{eq:isom}, with a view towards deducing Proposition~\ref{pro:manyMeso}, it will be enough to obtain bounds similar to \eqref{eq:LBL2} but for the connected component in  $\mathcal{C}_{a^2/2}\cap \tilde{B}(x,r)$ with the largest volume instead of $\widehat{\K}^{-a}_{\tilde B}$, which is the goal of the next lemma. 

\begin{Lemme}\label{lem:manyMeso} 
For all $u\in{(0,1)}$ and $r\geq C$ satisfying 
 $u^{-1}\leq s_{r/2}$ (cf.~\eqref{eq:rCond}), all $K\subset\tilde{B}(r)$  as in \eqref{eq:K-ass}, and all 
 $x\in G_K$ with $d(x,K)\geq 2\Cr{C:exit}r$, 
 \begin{equation} \label{eq:LBL21}
\PK \otimes \OPK\bigg( \big| \mathcal H\big(x,r, u^{-1}, \hat{\I}^u_{\tilde B}\big) \big| \ge \frac{c\sqrt{u}r^\alpha}{q(r/2)^{2}}, \ \mathrm{cap}_{\tilde{\G}_K}( \hat{\K}_{\tilde B}^{0})\geq \frac{cr^{\nu}}{q(r/2)^{2}} \bigg) \ge cq(r)^{-4}.
    \end{equation}
\end{Lemme}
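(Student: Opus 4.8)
The plan is to obtain \eqref{eq:LBL21} from the moment bounds of Lemma~\ref{lem:manyPtsInTypCl}, the hitting estimate \eqref{eq:hittingvscap}, and the Poissonian description of the interlacement excursions recalled above \eqref{eq:defIu}, via a Paley--Zygmund argument for $|\mathcal H(x,r,u^{-1},\hat\I^u_{\tilde B})|$.

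First I would fix a favourable configuration of the field. For every $s>1$ the event $\mathcal E(x,r,s)$ is increasing in $\varphi$: since $\mathrm{cap}_{\tilde\G_K}(\K^0_{\tilde B}(y))$ is non-decreasing when $\varphi$ increases, so is $|\mathcal D(x,r,s)|$, and $\mathcal E(x,r,s)=\{|\mathcal D(x,r,s)|\ge\frac12\EK[|\mathcal D(x,r,s)|]\}$ compares it to a deterministic number. Hence, applying \eqref{eq:ELB} with the admissible thresholds $s=u^{-1}$ and $s=s_{r/2}$ (note $u^{-1}\le s_{r/2}$, so both satisfy \eqref{eq:rCond}) and using the FKG inequality for $\PK$, the event $\mathcal E:=\mathcal E(x,r,u^{-1})\cap\mathcal E(x,r,s_{r/2})$ has $\PK(\mathcal E)\ge c$; further intersecting it with $\{|\mathcal D(x,r,u^{-1})|\le C_0 r^\alpha u^{1/2}\}$, which by Markov's inequality and the first bound in \eqref{eq:secondmoment} costs only a constant factor in probability when $C_0$ is large, we arrive at an event $G$ with $\PK(G)\ge c$ on which
\begin{equation*}
c\, r^\alpha u^{1/2}\le|\mathcal D(x,r,u^{-1})|\le C_0 r^\alpha u^{1/2},\qquad \mathrm{cap}_{\tilde\G_K}(\hat\K^0_{\tilde B})\ge s_{r/2}=\Cr{C:lawcaploc}(r/2)^{\nu}q(r/2)^{-2}\ge c\, r^{\nu}q(r/2)^{-2}
\end{equation*}
(the last inequality uses that $\mathcal D(x,r,s_{r/2})\ne\emptyset$ on $\mathcal E$, its cardinality being at least $\frac12\EK[|\mathcal D(x,r,s_{r/2})|]>0$). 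In particular the capacity requirement in \eqref{eq:LBL21} is already met on $G$.

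Next I would estimate, conditionally on $\varphi$, the first moment of $|\mathcal H(x,r,u^{-1},\hat\I^u_{\tilde B})|$ under $\OPK$. By the description of the interlacement excursions entering a set recalled above \eqref{eq:defIu}, conditionally on $\varphi$ the set $\hat\I^u_{\tilde B}$ is the union of the forward parts, killed upon exiting $\tilde B(x,\Cr{C:exit}r)$, of $N\sim\mathrm{Poisson}(u\,\mathrm{cap}_{\tilde\G_K}(\hat\K^0_{\tilde B}))$ i.i.d.\ diffusions started from $\mathrm{cap}_{\tilde\G_K}(\hat\K^0_{\tilde B})^{-1}e_{\hat\K^0_{\tilde B},\tilde\G_K}$, which is supported in $B(x,r)$ because $\hat\K^0_{\tilde B}\subset\tilde B(x,r)$. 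For $y\in\mathcal D(x,r,u^{-1})$, \eqref{eq:hittingvscap} applies with $A=\K^0_{\tilde B}(y)\subset\tilde B(x,r)$ (using $d(x,K)\ge2\Cr{C:exit}r$), so, averaging over the starting measure, a single excursion hits $\K^0_{\tilde B}(y)$ before exiting $\tilde B(x,\Cr{C:exit}r)$ with probability $p_y\ge c\, r^{-\nu}\mathrm{cap}_{\tilde\G_K}(\K^0_{\tilde B}(y))\ge c\, r^{-\nu}u^{-1}$, whence
\begin{equation*}
\OPK\big(\K^0_{\tilde B}(y)\cap\hat\I^u_{\tilde B}\ne\emptyset\,\big|\,\varphi\big)=1-e^{-u\,\mathrm{cap}_{\tilde\G_K}(\hat\K^0_{\tilde B})\,p_y}\ge1-\exp\{-c\, r^{-\nu}\mathrm{cap}_{\tilde\G_K}(\hat\K^0_{\tilde B})\}\ge c\, q(r/2)^{-2}
\end{equation*}
on $G$, using $\mathrm{cap}_{\tilde\G_K}(\hat\K^0_{\tilde B})\ge c\, r^{\nu}q(r/2)^{-2}$ and $q(r/2)\ge c$. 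Summing over $y\in\mathcal D(x,r,u^{-1})$ yields $\OEK\big[|\mathcal H(x,r,u^{-1},\hat\I^u_{\tilde B})|\,\big|\,\varphi\big]\ge c\, q(r/2)^{-2}|\mathcal D(x,r,u^{-1})|\ge c\, r^\alpha u^{1/2}q(r/2)^{-2}$ on $G$.

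Finally, with $Z:=|\mathcal H(x,r,u^{-1},\hat\I^u_{\tilde B})|\,\1_G$ on $\PK\otimes\OPK$, the previous step gives $\E[Z]\ge\mu:=c\, r^\alpha u^{1/2}q(r/2)^{-2}>0$, and Paley--Zygmund reduces the claim to an upper bound $\E[Z^2]\le C\, q(r)^{4}\mu^{2}$: then $\PK\otimes\OPK(Z\ge\mu/2)\ge\mu^2/(4\E[Z^2])\ge c\, q(r)^{-4}$, and on $\{Z\ge\mu/2\}$, which forces $\varphi\in G$, both events in \eqref{eq:LBL21} occur. Using the elementary bound $\OPK(E_1\cap E_2\mid\varphi)\le \OPK(E_1\mid\varphi)\OPK(E_2\mid\varphi)+\OPK(\text{a single excursion realises both }E_1,E_2\mid\varphi)$ (via Poisson thinning of the $N$ excursions), one gets, conditionally on $\varphi$, $\OEK\big[|\mathcal H|^2\,\big|\,\varphi\big]\le\OEK\big[|\mathcal H|\,\big|\,\varphi\big]^2+u\,\mathrm{cap}_{\tilde\G_K}(\hat\K^0_{\tilde B})\sum_{y,y'\in\mathcal D(x,r,u^{-1})}p_{y,y'}$, where $p_{y,y'}$ is the probability that a single excursion from $\mathrm{cap}_{\tilde\G_K}(\hat\K^0_{\tilde B})^{-1}e_{\hat\K^0_{\tilde B}}$ meets both $\K^0_{\tilde B}(y)$ and $\K^0_{\tilde B}(y')$ before leaving $\tilde B(x,\Cr{C:exit}r)$. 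Two applications of the strong Markov property at the hitting times together with \eqref{eq:intro_Green} give, for $y,y'$ in distinct clusters, $p_{y,y'}\le C\,\mathrm{cap}_{\tilde\G_K}(\hat\K^0_{\tilde B})^{-1}\mathrm{cap}_{\tilde\G_K}(\K^0_{\tilde B}(y))\mathrm{cap}_{\tilde\G_K}(\K^0_{\tilde B}(y'))(d(\K^0_{\tilde B}(y),\K^0_{\tilde B}(y'))\vee1)^{-\nu}$ --- the hub capacity cancelling the Poisson parameter --- while the same-cluster pairs are handled using the a priori bound $|\mathcal D(x,r,u^{-1})|\le C_0 r^\alpha u^{1/2}$ valid on $G$. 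Taking $\EK[\,\cdot\,\1_G]$ and bounding the distinct-cluster sum by conditioning on $\K^0(y)$ via \eqref{eq:Markov2}, then invoking the capacity tail \eqref{eq:capTail}, the comparison $\mathrm{cap}\le C|\cdot|$ of \eqref{eq:subadd}, and the volume growth \eqref{eq:intro_sizeball} to sum the $(d\vee1)^{-\nu}$-weights --- essentially as in the second-moment computation of Lemma~\ref{lem:manyPtsInTypCl} --- is the main obstacle, and is precisely where the powers of $q$ in \eqref{eq:LBL21} are produced. Once the bound $\E[Z^2]\le C\, q(r)^{4}\mu^{2}$ is established, the displayed Paley--Zygmund estimate finishes the proof.
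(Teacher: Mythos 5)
Your setup (FKG to combine $\mathcal E(x,r,u^{-1})$ and $\mathcal E(x,r,s_{r/2})$, the Poissonian description of the excursions constituting $\hat\I^u_{\tilde B}$, the hitting estimate \eqref{eq:hittingvscap} giving a per-trajectory hitting probability $\geq c r^{-\nu}u^{-1}$, and hence a conditional first moment $\geq c\,r^\alpha u^{1/2}q(r/2)^{-2}$ on your good event) matches the paper's argument. However, the proof is not complete: the decisive estimate $\E[Z^2]\leq C q^4\mu^2$, which you yourself call ``the main obstacle'' and on which the whole Paley--Zygmund conclusion rests, is only sketched, not proved. Moreover, the route you outline would run into real trouble as written: in the distinct-cluster sum you need expectations of products of cluster capacities, and by \eqref{eq:capTail} the capacity of a critical cluster has tail $\asymp s^{-1/2}$, hence infinite mean, so the quantity $\sum_{y\neq y'}\mathrm{cap}_{\tilde\G_K}(\K^0_{\tilde B}(y))\,\mathrm{cap}_{\tilde\G_K}(\K^0_{\tilde B}(y'))\,d(\cdot)^{-\nu}$ cannot be integrated against $\PK$ without an additional truncation (e.g.\ using that clusters confined to $\tilde B$ have capacity $\leq Cr^\nu$); also the factor $\mathrm{cap}_{\tilde\G_K}(\hat\K^0_{\tilde B})^{-1}$ in your claimed bound on $p_{y,y'}$ is not justified.

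The point you are missing is that no second-moment computation involving the interlacement is needed at all: by \eqref{eq:signConn} one has the deterministic inclusion $\mathcal H(x,r,u^{-1},\hat\I^u_{\tilde B})\subset\mathcal D(x,r,u^{-1})$, so
\begin{equation*}
\EK\otimes\OEK\big[\,|\mathcal H(x,r,u^{-1},\hat\I^u_{\tilde B})|^2\,\big]\;\leq\;\EK\big[\,|\mathcal D(x,r,u^{-1})|^2\,\big]\;\stackrel{\eqref{eq:secondmoment}}{\leq}\;C\,u\,r^{2\alpha},
\end{equation*}
which is exactly $C' q(r/2)^4\mu^2$ with your $\mu=c\,r^\alpha u^{1/2}q(r/2)^{-2}$; Paley--Zygmund then yields \eqref{eq:LBL21} with the loss $q^{-4}$ coming precisely from this crude bound. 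This is what the paper does (it runs Paley--Zygmund under the conditional measure given $A=\{\mathrm{cap}_{\tilde\G_K}(\hat\K^0_{\tilde B})\geq s_{r/2}\}$ rather than with an indicator, but that is cosmetic). With this one-line replacement of your final step, the rest of your argument goes through.
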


\begin{proof}
We use a second moment argument again, recall $s_r$ from \eqref{eq:rCond} and that $s_{r/2} \geq u^{-1}$ by our assumption on $r$.  First note that by \eqref{eq:defD1}, \eqref{eq:Edef} and \eqref{eq:secondmoment}, on the event ${\mathcal E}(x,r,s_{r/2})$, if $r\geq C$ then  using that $\lim_{r \to \infty} r^{\alpha-\frac\nu2}=\infty$,  there is at least one $y\in{B(x,r)}$ such that $\mathrm{cap}_{\tilde{\G}_K}(\K^0_{\tilde{B}})\geq s_{r/2}$. In particular, the number of trajectories under $\OPK$ of interlacements at level $u$ hitting $\hat{\K}^0_{\tilde B}$ that constitute $\hat{\I}^u_{\tilde B}$ in \eqref{eq:Ihat} is a Poisson variable with parameter larger than $ us_{r/2}= cur^{\nu}q(r/2)^{-2}$. By the thinning property of Poisson variables, for any $y\in{\mathcal{D}(x, r, u^{-1})}$  the number of such trajectories which hit  ${\K}^0_{\tilde{B}}(y)$ before first exiting $\tilde{B}(x,\Cr{C:exit}r)$ is a Poisson variable with parameter larger than 
\begin{align*}
cur^{\nu}q(r/2)^{-2}\inf_{z\in{\hat{\K}^{0}_{\tilde B}}}\PPK_z\big(H_{{\K}^0_{\tilde{B}}(y)}<H_{\tilde{B}(x,\Cr{C:exit}r)^{\mathsf c}} \big)
&\geq c' uq(r/2)^{-2}\mathrm{cap}_{\tilde{\G}_K}\big({\K}^0_{\tilde{B}}(y) \big)\\
&\geq c' q(r/2)^{-2},
\end{align*}
where in the first inequality we used \eqref{eq:hittingvscap}, and we used \eqref{eq:defD1} and \eqref{eq:Fdef}, by which $\mathrm{cap}_{\tilde{\G}_K}({\K}^0_{\tilde{B}}(y)) \geq u^{-1}$ for any $y\in{\mathcal{D}(x, r, u^{-1})}$, to obtain the last inequality. We deduce, using the previous observations along with \eqref{eq:Edef} and \eqref{eq:secondmoment}, that for $r \ge C$,
\begin{align}
\label{eq:firstmoment2}
\begin{split}
    \EK \otimes \OEK&\Big[ 1\big\{\mathrm{cap}_{\tilde{\G}_K}( \hat{\K}_{\tilde B}^{0})
    \geq s_{r/2}\big\} \cdot \sum_{y \in \mathcal{D}(x,r,u^{-1})} 1\{{ \mathcal K}^0_{\tilde B}(y) \cap  \hat{\I}^u_{\tilde B} \ne \emptyset \} \Big] \\
    &\ge \EK \Big[ 1\{{\mathcal E}(x,r,s_{r/2})\} \cdot \sum_{y \in \mathcal{D}(x,r,u^{-1})} \OPK\big({ \mathcal K}^0_{\tilde B}(y) \cap  \hat{\I}^u_{\tilde B} \ne \emptyset  \big) \Big] \\
    &\ge \PK \big( {\mathcal{E}}(x,r,s_{r/2}) \cap \mathcal{E}(x,r,u^{-1})\big)cr^{\alpha}u^{\frac12} \big( 1 - \exp\{-c q(r/2)^{-2}\} \big).
\end{split}
\end{align}
In view of \eqref{eq:defD1}, \eqref{eq:Fdef} and \eqref{eq:Edef}, the event $\mathcal{E}(x,r,s)$ is increasing  in $\varphi$, and thus decreasing in $s$. Therefore, by the FKG inequality and \eqref{eq:ELB}, since $s_{r/2} \geq u^{-1}$ by assumption, one can bound the probability in the last line of \eqref{eq:firstmoment2} from below by a constant. Let us abbreviate $A=\big\{\mathrm{cap}_{\tilde{\G}_K}( \hat{\K}_{\tilde B}^{0})\geq s_{r/2}\big\}$. It thus follows from \eqref{eq:firstmoment2}, \eqref{eq:signConn}, recalling also that $q(\cdot)\geq c$, that 
\begin{equation*}
\begin{split}
\EK \otimes \OEK\Big[ \big|  \mathcal H(x,r, u^{-1}, \hat{\I}^u_{\tilde B}) \big| \cdot 1_A  \Big] \ge cr^\alpha u^{\frac12}q(r/2)^{-2}.
\end{split}
\end{equation*}
For the second moment we can crudely upper bound in view of \eqref{eq:signConn}
\begin{equation}
\label{eq:secondmoment3}
\EK \otimes \OEK\Big[ \big|  \mathcal H(x,r, u^{-1}, \hat{\I}^u_{\tilde B}) \big|^2 \cdot 1_A  \Big]\leq \EK \big[| \mathcal{D}(x,r,u^{-1})|^2\big]
\le Cur^{2\alpha},
\end{equation}
where the last inequality is \eqref{eq:secondmoment}. Applying the Paley-Zygmund inequality under $\PK \otimes \OPK(\cdot\,|\,A)$ then yields that
\begin{align*}
\PK \otimes \OPK&\Big(| \mathcal H(x,r, u^{-1}, \hat{\I}^u_{\tilde B}) |\ge c \sqrt{u}q(r/2)^{-2}r^\alpha\,\big|\,A \Big)
\\ &\geq \frac{\EK \otimes \OEK\big[|( \mathcal H(x,r, u^{-1}, \hat{\I}^u_{\tilde B}) |\cdot 1_A\big]^2}{\EK \otimes \OEK\big[|( \mathcal H(x,r, u^{-1}, \hat{\I}^u_{\tilde B}) |^2 \cdot 1_A \big]\PK(A)}
\geq \frac{cur^{2\alpha} q(r/2)^{-4}}{Cur^{2\alpha}\PK(A)},
\end{align*}
from which \eqref{eq:LBL21} follows.
\end{proof}

As we now explain, the inequality \eqref{eq:LBL2} is a direct consequence of \eqref{eq:LBL21} and the isomorphism \eqref{eq:isom}. 
\begin{proof}[Proof of Proposition~\ref{pro:manyMeso}]
    Recalling
$\mathcal H(x,r, s,I)$ from \eqref{eq:signConn}, let us define for $I\subset\tilde{\G}$ the set
    \begin{equation}
    \label{eq:tildeH}
\begin{split} 
    \tilde{\mathcal{H}}(x,r,s,I)\stackrel{{\rm def.}}{=} \bigcup_{y\in{{\mathcal{H}}(x,r,s,I)}}\K^0_{\tilde B}(y).
\end{split}
\end{equation}
Then, since  $\hat{\I}^u_{\tilde B}$ is a connected subset of $\tilde B(x,\Cr{C:exit}r)$ by definition (see \eqref{eq:Ihat}), the set $\hat{\I}^u_{\tilde B}\cup\tilde{\mathcal H}(x,r, u^{-1}, \hat{\I}^u_{\tilde B})$ is a connected subset of $B(x,\Cr{C:exit}r)$, see \eqref{eq:defD1},\eqref{eq:Fdef},  and \eqref{eq:signConn}, which by definition is included in $\mathcal{C}_u$, see above \eqref{eq:isom}. If now $rq(\Cr{c:q}r)^{-\frac2\nu}\geq \Cr{C:LBLneg}a^{-2/\nu}$, $d(x,K)\geq 2\Cr{C:exit}r$ and $\Cr{c:q}= 1/\Cr{C:exit}$, 
since $\mathcal{H}(x,r,u^{-1},\hat{\I}^u_{\tilde B})\cup \hat{\K}_{\tilde B}^{0}\subset\tilde{\mathcal{H}}(x,r,u^{-1},\hat{\I}^u_{\tilde B})$, after a change of variable for (i.e.~rescaling of) $r$, \eqref{eq:LBL2} follows directly from \eqref{eq:isom}, \eqref{eq:LBL21} for $u=a^2/2$, and monotonicity of the capacity,  noticing that the condition $u^{-1}\leq s_{r/2}$ appearing in Lemma~\ref{lem:manyMeso}  is satisfied by assumption on $r$. 
\end{proof}

\begin{Rk}
\label{rk:endsection4}
\begin{enumerate}[1)]
\item \label{rk:casesupq=infty} Under the conditions of Proposition~\ref{pro:manyMeso}, if additionally $r\geq Ca^{-C'}$, one can actually improve \eqref{eq:LBL2} to obtain
        \begin{equation} \label{eq:LBL4}
\PK \Big(\exists\, y\in G:\, \big| {\K}^{-a}_{\tilde B}(y)\big| \ge \Cr{c:LBhatK} ar^\alpha, \, \mathrm{cap}_{\tilde{\G}_K}\big({\K}^{-a}_{\tilde B}(y)\big)\geq cr^{\nu} \Big)\ge 
    c.
    \end{equation}
To prove \eqref{eq:LBL4}, one can first replace the bound \eqref{eq:LBL21} by
\begin{equation} \label{eq:LBL22}
    \PK \otimes \OPK\big( | \mathcal H(x,r, u^{-1}, \I^u) | \ge c \sqrt{u}r^\alpha \big) \ge c,
\end{equation}
which is a consequence of the Paley-Zygmund inequality: one can use \eqref{eq:defIu}, \eqref{eq:defD1} and \eqref{eq:Fdef} to show that $\K_{\tilde{B}}^0(y)$  has a constant probability to intersect $\I^u$ for any $y \in \mathcal{D}(x,r,u^{-1})$, which lets us bound the first moment of $| \mathcal H(x,r, u^{-1}, \I^u) |$ by $cu^{1/2} r^\alpha$ using \eqref{eq:secondmoment}, and the second moment can be bounded from above similarly as in \eqref{eq:secondmoment3}. Contrary to the argument below \eqref{eq:tildeH}, the set ${\I}^u\cup\tilde{\mathcal H}(x,r, u^{-1}, {\I}^u)$ is however not necessarily connected in $B(x,Cr)$. This is nonetheless the case under the event $\textnormal{LocUniq}_{u,r}(x)$ from \eqref{eq:lu-def},  which occurs at the same time as the event in \eqref{eq:LBL4} with constant probability by \eqref{eq:boundlocaluniq}, under the additional assumption $r\geq Cu^{-C}$.  By \eqref{eq:Iu-capbound} and the FKG inequality, the previous set has capacity (in $\tilde\G_K$) at least $cr^{\nu}$ with sizeable probability if $r\geq Cu^{-C}$, 
 and using the isomorphism \eqref{eq:isom} similarly as below \eqref{eq:tildeH} then yields \eqref{eq:LBL4}.
\item \label{rk:casealpha>2nu}
The previous proof of \eqref{eq:LBL4} is arguably easier than the proof of \eqref{eq:LBL2}, as it does not require to introduce the complicated set $\hat{\I}_{\tilde{B}}^u$ from \eqref{eq:Ihat}, and \eqref{eq:LBL4} does not require any information on the value of the function $q$ from \eqref{eq:qPsiBd}. However, it requires $r\geq Ca^{-C'}$ without good control on the value of $C'$, which will be detrimental when trying to prove \eqref{eq:tailEstVolgen} and \eqref{eq:tailEstlargestVolgen} near criticality, see Remark~\ref{rk:final},\ref{rk:boundinanydimension} for details.  An exception is when $\alpha>2\nu$, in which case this condition can be replaced by the requirement that $rq(\Cr{c:q}r)^{-\frac2\nu}\geq \Cr{C:LBLneg}a^{-2/\nu}$ as in \eqref{eq:LBL2}, which offers an alternative way to eventually derive Theorem~\ref{T:offcritvol-Zd-LB} when $\alpha > 2\nu$, and is closer to the strategy from \cite{DrePreRod5}. The improved condition on $a$ and $r$ follows on account of \eqref{eq:c4explicit}, which allow to apply both \eqref{eq:boundlocaluniq} and \eqref{eq:Iu-capbound} when $r \geq Ca^{-2/\nu}$. A similar remark can be made when $\alpha=2\nu$ (for instance on $\Z^4$), but with additional logarithmic corrections (cf.~for instance \cite[(1.4)]{prevost2023passage} for best available results regarding \eqref{eq:boundlocaluniq} in this case). 
\end{enumerate}
\end{Rk}

\section{Proof of main results} \label{sec:den}

In this section we finish the proof of Theorems~\ref{thm:volLB} and~\ref{the:mainMra} using Proposition~\ref{pro:manyMeso}, and deduce Corollary~\ref{C:offcritvol-mom}. This completes the proof of all results from Section~\ref{sec:mainresults}. Recall that the results stated in the introduction follow by combining these (as explained in detail below each result in the introduction) and considering the special case $\G=\Z^\alpha$, $\alpha\geq 3$, wit unit weights and $d(\cdot,\cdot)$ the graph, or Euclidean, distance, for which $\nu=\alpha-2$. 

\bigskip
We start with the simpler: 
\begin{proof}[Proof of Theorem~\ref{the:mainMra}]
We first show \eqref{eq:tailEstlargestVolgen}. When $a\in{(-1,0)}$ and $r\geq \Cl{C:boundarlargest}a^{-2/\nu}$ for sufficiently large $\Cr{C:boundarlargest}$, \eqref{eq:tailEstlargestVolgen} is an immediate consequence of Proposition~\ref{pro:manyMeso}, applied with $K=\emptyset$, and the assumption $\sup q<\infty$. When $a\in{(0,1)}$ and $r\geq \Cr{C:boundarlargest}a^{-2/\nu}$, \eqref{eq:tailEstlargestVolgen} follows from the entropy bound \eqref{eq:entropy} with $K=\emptyset$, $b=2a$, $A=B(r)$ and $E=\{\LV_r^a\geq n\}$, which imply that 
\begin{align} \label{eq:largestclustCardLB}
\begin{split}
    \P\big(\LV^a_r\geq n\big)
    &\ge\P\big(\LV^{-a}_r\geq n\big)
    \cdot \exp \Big\{ - \frac{(2a)^2 {\rm cap}(B(r)) + 2/{\rm e}}{2\P\big(\LV^{-a}_r\geq n\big)}\Big\} \\ 
    &\geq \exp\{-Ca^2r^{\nu}\},
\end{split}
\end{align}
where we used \eqref{eq:capBallBd} and \eqref{eq:tailEstlargestVolgen} for $-a$ in the last inequality. When $a\in{[0,1]}$, $r\geq \Cr{C:boundarlargest}$ and $r< \Cr{C:boundarlargest}a^{-2/\nu}$ (with $0^{-2/\nu}=+\infty$), one notices that if $a'=(r/\Cr{C:boundarlargest})^{-\nu/2}$, then the conclusions of \eqref{eq:tailEstlargestVolgen} have already been proved for the pair $(a',r)$. But since $a' \leq a$ by assumption, one has $\LV_r^a\geq \LV_r^{a'}$ by monotonicity, and \eqref{eq:tailEstlargestVolgen} for the pair $(a,r)$ follows. When $a\in{[0,1]}$ and $r\leq \Cr{C:boundarlargest}$, \eqref{eq:tailEstlargestVolgen} follows readily from the previous cases upon possibly adapting the constants. Finally, when $a\in{(-1,0)}$ and $r< \Cr{C:boundarlargest}a^{-2/\nu}$, \eqref{eq:tailEstlargestVolgen} follows from \eqref{eq:tailEstlargestVolgen} for $a=0$ and monotonicity. 

We now explain how to deduce \eqref{eq:tailEstlargestVolgena=0} from \eqref{eq:tailEstlargestVolgen} for $a=0$ and the isomorphism with loop soups. Let us first briefly recall the latter. We denote by $\mathcal{L}$ the loop soup on $\tilde{\G}$ with intensity $1/2$ from \cite[Section~2]{MR3502602}, which is a Poisson point process on the space of loops, that is, continuous paths on $\tilde{\G}$ starting and ending at the same point. Two points are in the same cluster of loops if and only if they are connected by a finite path of loops in $\mathcal{L}$ intersecting each other. We further attribute to each cluster of loops a sign $\pm$ with probability $1/2$, independently for each cluster.  By the isomorphism with loop soups \cite[Proposition~2.1]{MR3502602}, up to extending the probability space $\P$, one can couple the Gaussian free field $\phi$ on $\tilde{\G}$ with a loop soup such that $\{x\in{\tilde{\G}}:\phi_x>0\}$ has the same law as the clusters of loops in $\mathcal{L}$ with positive sign.

We now say that two points $x,y$ are in the same cluster of loops in $\tilde{B}(r)$ for $\mathcal{L}$ if and only if there is a path in $\tilde{B}(r)$ from $x$ to $y$ entirely included in the union of the loops in $\mathcal{L}$. Each such  cluster is included in a cluster of loops in $\mathcal{L}$, and inherit its random sign. The clusters of $\{x\in{\tilde{B}(r)}:\,\phi_x>0\}$ then corresponds to positive clusters of loops in $B(r)$ for $\mathcal{L}$ by the previous isomorphism. In particular, $\LV_r^0$ is equal to the cardinality of the positive cluster of loops in $\tilde{B}(r)$ for $\mathcal{L}$ with the largest cardinality. Let ${\mathcal{L}}_r$ be the point process of loops in $\mathcal{L}$ which are entirely included in $\tilde{B}(\zeta r)$, where $\zeta$ will be determined later.  Let us then denote by $\tilde{\LV}_r^0$ the cardinality of the positive cluster of loops in $\tilde{B}(r)$ for $\mathcal{L}_r$ with the largest cardinality. Then $\tilde{\LV}_r^0=\LV_r^0$ if there are no loops in $\mathcal{L}$ which hit both $\tilde{B}(r)$ and $\tilde{B}(\zeta r)^{\mathsf c}$. By \cite[Lemma~3.1]{DrePreRod8} and \eqref{eq:capBallBd}, the number of such loops is a Poisson random variable with parameter at most $C\zeta^{-\nu}$ and hence by \eqref{eq:tailEstlargestVolgen}
\begin{equation}
\label{eq:tailEstlargestVolgeninB(r)}
\begin{split} 
\P\big( \tilde{\LV}^0_r \geq cr^{\alpha-\frac\nu2} \big)&\geq \P\big( \LV^0_r \geq cr^{\alpha-\frac\nu2} \big)-(1-\exp\{-C\zeta^{-\nu}\})
\\&\geq c-(1-\exp\{-C\zeta^{-\nu}\})\geq c',
\end{split}
\end{equation}
where the last inequality is satisfied upon fixing $\zeta=\Cl{C:zeta}$ for a large enough constant $\Cr{C:zeta}$. Let us denote by $\tilde{M}_r^0(x)$ the same quantity as $\tilde{M}_r^0$, but replacing $0$ by $x$ in its definition. In other words, $\tilde{M}_r^0(x)$ is the cardinality of the positive cluster of loops in $\tilde{B}(x,r)$ with the largest cardinality, for the loops in $\mathcal{L}$ which are entirely included in $\tilde{B}(x,\Cr{C:zeta}r)$. Let $A_t=\Lambda(2\Cr{C:zeta}r)\cap B(rt)$, where $\Lambda(\cdot)$ is the set from \cite[(2.2)]{DrePreRod8}. Then $A_t$ is such that
 the balls $\tilde{B}(x,\Cr{C:zeta}r)$, $x\in{A_t}$, are disjoint, and since $\mathcal{L}$ is a Poisson point process of loops, we deduce that $\tilde{M}_r^0(x)$, $x\in{A_t}$, are independent random variable. Furthermore,  assuming w.l.o.g.\ that $t\geq4\Cr{C:zeta}$, $B(tr/2)$ is included in the union of the balls $B(x,2\Cr{C:zeta}r)$, $x\in{A_t}$ by \cite[(2.2)]{DrePreRod8}, and thus $cr^{\alpha}t^{\alpha}\leq |B(x,tr/2)|\leq |A_t| Cr^{\alpha}$ by \eqref{eq:intro_sizeball} and \eqref{eq:lambdabounded}, that is $|A_t|\geq c't^{\alpha}$. Since \eqref{eq:tailEstlargestVolgeninB(r)} as also satisfied for $\tilde{M}_r^0(x)$ by a similar proof, we deduce that for all $r,t\geq1$
 \begin{equation*}
\begin{split} 
\P \big(\tilde{\LV}^0_r(x) \leq cr^{\alpha-\frac\nu2} \text{ for all }x\in{A_t} \big)\leq c^{|A_t|}\leq \exp(-ct^{\alpha}),
\end{split}
\end{equation*}
where the last inequality follows upon fixing $t=C$ for a large enough constant $C=C(\eps)\geq4\Cr{C:zeta}$. Since $\tilde{B}(x,\Cr{C:zeta}r)\subset \tilde{B}(2tr)$ for all $x\in{A_t}$, we have $M_{2tr}^0\geq \tilde{\LV}^0_r(x)$ for all $x\in{A_t}$ by definition and the isomorphism with loop soups, which finishes the proof of \eqref{eq:tailEstlargestVolgena=0} after a change of variable for $r$ and $t$. 
\end{proof}

We now turn to the proof of Theorem~\ref{thm:volLB}. It broadly follows the general strategy from \cite[Section~6]{DrePreRod5}, see also the proof of Corollary 1.3 in \cite{DrePreRod8}, where a lower bound on the radius of $\K^a$ when $\alpha>2\nu$ was obtained. Let us now explain this strategy, appropriately modified to handle the volume of $\K^a$ and all possible values of $\alpha,\nu$. 

Recall the definition of the graph $\G_K$ from below \eqref{eq:K-ass}, so that $\tilde{\G}_K$ is the unbounded connected component of $\tilde{\G}\setminus K$, and write $\xi=\xi(a)=1\vee (2a)^{-\frac2\nu}$. As explained in \cite[(6.6) and (6.7)]{DrePreRod5}, there exists a constant $\Cl{c:hkK}\in{(1,\infty)}$ such that for any $K\subset \tilde{B}(\xi)$ as in \eqref{eq:K-ass}
\begin{equation}
\label{eq:hkK}
\begin{split} 
\tilde{\G}\setminus \tilde{B}(\Cr{c:hkK}\xi)\subset \tilde{\G}_K\text{ and }P_x(H_K<\infty)\leq\frac12\text{ for all }x\in{\tilde{B}(\Cr{c:hkK}\xi)^{\mathsf c}}.
\end{split}
\end{equation}
Recall that \(\K_r^{2a}=\K_{\tilde{B}(r)}^{2a}\) is the cluster of $0$ in $\tilde{B}(r)$ above level $2a$, see \eqref{eq:Kr} and below. Write $h_{\K^{2a}_{\xi}}=h_{\K^{2a}_{\xi}}^{\emptyset}$ for the harmonic average of $\K^{2a}_{\xi}$, see below \eqref{eq:Markov2}, then $h_{\K^{2a}_{\xi}}\geq 2aP_x(H_{\tilde{\K}_{\xi}^{2a}}<\infty)$ for all $x\in{\tilde{\G}}$. For $K$ as in \eqref{eq:K-ass}, let us abbreviate $\mathbf{h}_K(x)=(1-2P_x(H_{K}<\infty))$ for all $x\in{\tilde{\G}}$. It therefore follows from the Markov property \eqref{eq:Markov2} that 
\begin{equation}
\label{eq:consMark}
\begin{gathered} 
\{x\in{\tilde{\G}_{\K_\xi^a}}:\,\phi_x\geq a\}\text{ under $\P\big(\cdot\,|{{\K}_\xi^a}\big)$ stochastically dominates }
\\\{x\in{\tilde{\G}_{\K_\xi^a}}:\,\phi_x\geq a\mathbf{h}_{\tilde{\K}_{\xi}^{2a}}(x)\}\text{ under }\PKK{{\K}_\xi^a}.
\end{gathered}
\end{equation}
Recall the constant $\Cr{C:exit}\geq 1$ from \eqref{eq:hittingvscap}, the constant $\Cr{C:dxK}$ from  Proposition~\ref{pro:manyMeso}, the constant $\Cr{C:lawcaploc}$ from Lemma~\ref{lem:capbox}, and abbreviate $\Cl{C:exit2}=\Cr{C:exit}\vee(1+\Cr{C:dxK})$. For $K$  as in \eqref{eq:K-ass}, $a>0$, $n\geq1$, $r \geq \Cr{c:hkK} \xi$ and $h:{\tilde{\G}}\rightarrow\R,$  introduce the favorable event 
\begin{equation}
\label{eq:AKary2}
    F(K,h,r, n)\stackrel{\text{def.}}{=}\left\{\begin{array}{c}
         \text{$\exists$ $\mathfrak  C \subset  \{x\in{\tilde{\G}_K}:\phi_x\geq h(x)\} \cap \tilde B(6\Cr{C:exit2}r) $}\\
     \text{with $\mathfrak{C}$ connected, $|\mathfrak C| \ge n$, and a continuous}\\
    \text{path }\pi\text{ in }\tilde{\G}_K\cap\tilde{B}(6\Cr{C:exit2}r)\text{ from }K\text{ to }\mathfrak  C\text{ with}
    \\ \phi_x\geq  h(x) \text{ for all }x\in \pi
    \end{array}\right\}.
\end{equation}
As a consequence of \eqref{eq:consMark} and \eqref{eq:AKary2}, we obtain that for $a>0$, $n\geq1$ and $r \geq \Cr{c:hkK} \xi$
\begin{align} \label{eq:LBcond0}
    \P\big(|\mathcal K^{a}_{6\Cr{C:exit2}r}| 
    \ge n \big) \ge
    \E \Big[ 1\{{\rm cap} ( \K_\xi^{2a}) \ge \Cr{C:lawcaploc}\xi^{\nu}q(\xi)^{-2} \} \, \PKK{\K_\xi^{2a}}
    \big(F(\K_\xi^{2a},a\mathbf{h}_{\tilde{\K}_{\xi}^{2a}},r,n) \big) \Big].
\end{align}
 Let us denote by $A_{a,r}=\tilde{B}(6\Cr{C:exit2}r)\setminus \tilde{B}(\Cr{c:hkK}\xi)$ and  let $\tilde{\mathbf{h}}_{K}(x)=\mathbf{h}_K(x)-2P_x^K(H_{A_{a,r}}<\infty)$, which depends implicitly on $a$ and $r$.  Note that for each $K\subset\tilde{B}(\xi)$ as in \eqref{eq:K-ass}, the event $F(K,a\mathbf{h}_K,r,n)$ has the same same law under $\PK_{2a,A_{a,r}}$, see above \eqref{eq:entropy}, as $F(K,a\tilde{\mathbf{h}}_{K},r,n)$ under $\PK$. It thus follows from \eqref{eq:entropy} that
\begin{align} \label{eq:clustCardLB}
\begin{split}
    &\PK \big(F(K,a\mathbf{h}_K,r, n)\big)
    \ge\PK\big(F(K,a\tilde{\mathbf{h}}_{K},r, n)\big)
    \cdot \exp \Big\{ - \frac{(2a)^2 {\rm cap}_{\tilde{\G}_K}({A_{a,r}}) + 2/{\rm e}}{2\PK(F(K,a\tilde{\mathbf{h}}_{K},r,n))} \Big\}.
\end{split}
\end{align}
In view of \eqref{eq:clustCapLocal}, \eqref{eq:LBcond0} and \eqref{eq:clustCardLB}, it remains to control the various terms appearing on the right hand side of \eqref{eq:clustCardLB} for an appropriate choice of the parameters $a,b,r$ and $n$. It first follows from \eqref{eq:hkK} and a reasoning similar to \cite[(7.7)]{DrePreRod5} that if $r\geq \xi$
\begin{equation}
    \label{eq:capUB}
        {\rm cap}_{\tilde \G_K}(A_{a,r}) \le \mathrm{cap}_{\tilde{\G}}(A_{a,r})+C\xi^{\nu}\leq C'r^{\nu},
\end{equation}
where the last inequality follows from \eqref{eq:capBallBd} and the monotonicity of capacity. Recall the constants $\Cr{C:LBLneg}$ and $\Cr{c:LBhatK}$ from Proposition~\ref{pro:manyMeso},  and the constant $\Cr{c:hkK}$ from \eqref{eq:hkK}. We can finally use Proposition~\ref{pro:manyMeso} to derive the following lower bound on  $\PK(F(K,a\tilde{\mathbf{h}}_{a,r},r,n))$.

\begin{Lemme}
\label{lem:boundParF}
    For all $a\in{(0,1]}$, $K\subset \tilde{B}(\xi)$ (with $\xi=1\vee (2a)^{-2/\nu}$) as in \eqref{eq:K-ass} and $n,r\geq C$ such that 
    \begin{equation}
    \label{eq:assumpParF}
         r\geq \Cr{c:hkK}\xi,\ rq(\Cr{c:q}r)^{-\frac2\nu}\geq \Cr{C:LBLneg}a^{-\frac2\nu}\text{ and }n\leq \frac{\Cr{c:LBhatK} a r^\alpha}{q(\Cr{c:q}r)^{2}}
    \end{equation}
    we have 
\begin{equation}
\label{eq:boundParF}
\begin{split} 
    \PK \big(F(K,a\tilde{\mathbf{h}}_{K},r, n)\big)\geq c\xi^{-\nu}\mathrm{cap}(K)q(\Cr{c:q}r)^{-C}.
\end{split}
\end{equation}
\end{Lemme}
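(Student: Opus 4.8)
The plan is to build the connected configuration witnessing $F(K, a\tilde{\mathbf{h}}_K, r, n)$ out of three pieces: (i) a path emanating from $K$ that reaches deep enough into $\tilde\G_K$ so that the shifted threshold $a\tilde{\mathbf{h}}_K$ has essentially dropped to level $a$ (or below) there; (ii) the large, high-capacity cluster $\K^{-a}_{\tilde B}(y)$ supplied by Proposition~\ref{pro:manyMeso} living at level $-a$ in a box $\tilde B = \tilde B(x,r)$ at distance of order $r$ from $K$; and (iii) a random interlacement (equivalently loop/sign-cluster) connection that glues piece (i) to piece (ii). Concretely, I would fix $x\in G_K$ with $d(x,K)$ of order $r$, say $d(x,K)=\Cr{C:exit2}r$, so that $\tilde B(x,r)\subset \tilde B(6\Cr{C:exit2}r)\cap \tilde\G_K$ and $d(x,K)\geq \Cr{C:dxK}r$, and so that the constraints $rq(\Cr{c:q}r)^{-2/\nu}\geq \Cr{C:LBLneg}a^{-2/\nu}$ from \eqref{eq:assumpParF} are exactly what Proposition~\ref{pro:manyMeso} needs. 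On the event in \eqref{eq:LBL2}, there is $y\in B(x,r)$ with $|\K^{-a}_{\tilde B}(y)|\geq \Cr{c:LBhatK}ar^\alpha/q(\Cr{c:q}r)^2\geq n$ (using the third hypothesis in \eqref{eq:assumpParF}) and $\mathrm{cap}_{\tilde\G_K}(\K^{-a}_{\tilde B}(y))\geq cr^\nu/q(\Cr{c:q}r)^2$. This already handles the "$|\mathfrak C|\geq n$'' requirement with $\mathfrak C = \K^{-a}_{\tilde B}(y)$, provided I verify that the shifted field $\phi_z + 2a P^K_z(H_{A_{a,r}}<\infty)$ — wait, I must be careful: $F$ is stated for the field with threshold $a\tilde{\mathbf{h}}_K$ under $\PK$, and $\tilde{\mathbf{h}}_K(z)=\mathbf{h}_K(z)-2P^K_z(H_{A_{a,r}}<\infty)\leq 1-2P^K_z(H_{A_{a,r}}<\infty)$, which is at most $0$ once $z$ is such that $P^K_z(H_{A_{a,r}}<\infty)\geq 1/2$; on $B(x,r)\subset A_{a,r}$ one has $P^K_z(H_{A_{a,r}}<\infty)=1$, so $a\tilde{\mathbf{h}}_K(z)\leq -a$ there. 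Hence $\{\phi_z\geq -a\}\subseteq \{\phi_z\geq a\tilde{\mathbf{h}}_K(z)\}$ on $A_{a,r}$, so the level-$(-a)$ cluster $\K^{-a}_{\tilde B}(y)$ is indeed a valid candidate for $\mathfrak C$.

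The path $\pi$ from $K$ to $\mathfrak C$ is the delicate part. I would produce it as follows. Near $K$, i.e.\ for $z$ with $P^K_z(H_{A_{a,r}}<\infty)$ bounded away from $1$, the threshold $a\tilde{\mathbf{h}}_K(z)$ can be as large as order $a$, so one needs the field to be positive of order $a$ along a short stretch of cable adjacent to $K$; this costs a factor of order $\xi^{-\nu}\mathrm{cap}(K)$ — this is exactly the mechanism behind the factor $\xi^{-\nu}\mathrm{cap}(K)$ in \eqref{eq:boundParF}, and it should be extracted by the same reasoning as in \cite[Section~6, around (6.16)--(6.17)]{DrePreRod5}: one uses that the probability that $\phi$ exceeds $a\mathbf{h}_K$ on a cable attached to $\partial K$ and reaches level $\geq a$ at distance $\asymp \xi$ from $K$ is $\gtrsim \xi^{-\nu}\mathrm{cap}(K)$, using the entrance probability estimate \eqref{eq:hittingvscap} and $\mathrm{cap}(B(x,\xi))\asymp \xi^\nu$ from \eqref{eq:capBallBd}. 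Once we are at distance of order $\xi \le r$ from $K$ and hence in the region where $a\tilde{\mathbf{h}}_K \le -a$ (more precisely, in $A_{a,r}$ where $P^K_\cdot(H_{A_{a,r}}<\infty)\ge 1/2$, so $a\tilde{\mathbf h}_K \le 0$), it remains to connect within $\{\phi\geq -a\}$, i.e.\ within $\mathcal C_{a^2/2}$, from a point near $K$ to the cluster $\K^{-a}_{\tilde B}(y)$. For this I would use that in the construction behind Proposition~\ref{pro:manyMeso} (cf.\ $\hat{\I}^u_{\tilde B}$ in \eqref{eq:Ihat} and $\tilde{\mathcal H}$ in \eqref{eq:tildeH}), the relevant cluster is connected, in $\mathcal C_{a^2/2}$ and inside $\tilde B(x,\Cr{C:exit}r)$, to $\hat{\K}^0_{\tilde B}$, which has capacity of order $r^\nu/q(\Cr{c:q}r)^2$ relative to $\tilde\G_K$; an interlacement trajectory at level $u=a^2/2$ starting from near $K$ hits this set before exiting $\tilde B(6\Cr{C:exit2}r)$ with probability $\gtrsim r^{-\nu}\,\mathrm{cap}_{\tilde\G_K}(\hat{\K}^0_{\tilde B})\gtrsim q(\Cr{c:q}r)^{-2}$ by \eqref{eq:hittingvscap}, and by the isomorphism \eqref{eq:isom} this interlacement piece becomes part of a $\{\phi\ge -a\}$-connection. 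Collecting the factors — $\xi^{-\nu}\mathrm{cap}(K)$ for the escape from $K$, $q(\Cr{c:q}r)^{-4}$ from \eqref{eq:LBL2}, another $q(\Cr{c:q}r)^{-2}$ for the hitting, plus constants — yields the bound $c\,\xi^{-\nu}\mathrm{cap}(K)\,q(\Cr{c:q}r)^{-C}$ in \eqref{eq:boundParF} with, say, $C=6$.

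The main obstacle I anticipate is bookkeeping the shift $a\tilde{\mathbf{h}}_K$ cleanly along the whole path: one must partition $\tilde B(6\Cr{C:exit2}r)$ into the "near-$K$'' region (where the threshold is positive, of order $a$, and the $\xi^{-\nu}\mathrm{cap}(K)$ cost is paid, à la \cite[(6.14)--(6.18)]{DrePreRod5}) and the "far'' region $A_{a,r}$ (where $a\tilde{\mathbf{h}}_K\le 0$, so one only needs a $\{\phi\ge -a\}$ connection, which is exactly what Proposition~\ref{pro:manyMeso} and the isomorphism deliver), and then check that these two regions overlap in a way that lets the two pieces of path be concatenated — in particular that the exit point of the near-$K$ excursion lies in $A_{a,r}$ and can serve as a starting point for the interlacement trajectory used in the far region, all while everything stays inside $\tilde B(6\Cr{C:exit2}r)$. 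A further technical point is that one should apply the entropy/FKG machinery on the graph $\tilde\G_K$ (so that the capacities and interlacements are those relative to $\tilde\G_K$, as in \eqref{eq:hittingvscap}, \eqref{eq:capBallBd}, \eqref{eq:LBL2}), which is precisely why Proposition~\ref{pro:manyMeso} was proved in that generality; with that in hand the argument is a direct adaptation of \cite[Section~6--7]{DrePreRod5}, with the radius estimates there replaced by the volume-and-capacity estimate \eqref{eq:LBL2}.
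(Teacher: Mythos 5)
Your far-field skeleton (Proposition~\ref{pro:manyMeso} supplying a high-volume, high-capacity cluster in a box at distance of order $r$ from $K$, an interlacement trajectory plus the isomorphism to connect it to $K$, and $\mathfrak C=\K^{-a}_{\tilde B}(y)$ once the threshold there is $\leq -a$) is indeed the paper's route, but the step you built around the region near $K$ rests on a misreading of $\tilde{\mathbf{h}}_{K}$, and that is precisely where the factor $\xi^{-\nu}\mathrm{cap}(K)$ has to come from. In fact, for every $x\in\tilde{B}(6\Cr{C:exit2}r)$ one has $a\tilde{\mathbf{h}}_{K}(x)=a\big(1-2P_x(H_K<\infty)-2P_x(H_{A_{a,r}}<H_K)\big)\leq -a$, because any trajectory started in the box either hits $K$ or reaches $A_{a,r}$ before $K$; so there is no near-$K$ region where the threshold is positive of order $a$, and no Cameron--Martin-type positivity event near $K$ is needed. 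Nor would your claimed bound for such an event stand: a probability bounded below by $\xi^{-\nu}\mathrm{cap}(K)$, i.e.\ \emph{linear} in $\mathrm{cap}(K)$, is not what GFF arm or capacity tails such as \eqref{eq:capTail} produce, and \eqref{eq:hittingvscap} together with $\mathrm{cap}(B(x,\xi))\asymp\xi^{\nu}$ are random-walk statements that do not yield it. In the paper this factor is the Poissonian probability, cf.~\eqref{eq:lbIu-}, that the interlacement at level $u\asymp a^{2}$ contains a trajectory whose closure hits $K$, namely $1-e^{-u\,\mathrm{cap}(K)}\geq c\,a^{2}\mathrm{cap}(K)\asymp \xi^{-\nu}\mathrm{cap}(K)$; that trajectory, stopped upon exiting $\tilde{B}(6\Cr{C:exit2}r)$, \emph{is} the path $\pi$ from $K$. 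Your proposal never pays for the existence of this trajectory (you simply posit ``an interlacement trajectory starting from near $K$''), so your accounting reaches the right total only because the same factor has been attached to a different, unproven event.

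Two further points would need repair. First, the level bookkeeping: you take the cluster from Proposition~\ref{pro:manyMeso} already at level $-a$ and then want to superpose an independent interlacement at level $u=a^{2}/2$ and still be dominated by $\{\phi\geq -a\}$; this double-spends the budget. One must split as in the paper: require the clusters at level $-a/2$ (the event $A_s$ in \eqref{eq:defAs}) and use \eqref{eq:isom3} with $u=a^{2}/8$ for the connecting trajectories. Second, with a single target box $\tilde{B}(x,r)$ you must show that the trajectory emanating from $K$ reaches that particular box, while staying in $\tilde{B}(6\Cr{C:exit2}r)$, with probability bounded below; \eqref{eq:hittingvscap} only applies to starting points in $B(x,r)$, so as written this is a gap. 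The paper avoids it by demanding the large cluster in \emph{every} box of the bounded covering family $S$ of the annulus (see \eqref{eq:propertiesS}), at the cost of a $q(\Cr{c:q}r)^{-C}$ via FKG, so that wherever the trajectory first exits $B(3\Cr{C:exit2}r)$ it lands in a usable box and \eqref{eq:hittingvscap} applies verbatim.
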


We will prove Lemma~\ref{lem:boundParF} at the end of this section, and we first explain how to finish the

\begin{proof}[Proof of Theorem \ref{thm:volLB}]
Similarly as around \eqref{eq:apos}, we only need to prove that $ \P( |\mathcal K^a| \geq n)$ satisfies the  bound \eqref{eq:tailEstVolgen} for all $a\in{[0,1]}$. We first focus on the case $n\geq \Cl{c:n>a}a^{1-\frac{2\alpha}{\nu}}$ and $a\in{(0,1]}$, where $\Cr{c:n>a}$ is a large enough constant we will fix in the proof. Let us choose
\begin{equation}
\label{eq:choicebr}
\begin{split} 
r\stackrel{\text{def.}}{=}\Big(\frac{2n\sup q^2}{\Cr{c:LBhatK} a}\Big)^{\frac1\alpha}.
\end{split}
\end{equation}
Note that  we have  $n,r\geq C$, $r\geq cn^{\frac1\alpha}a^{-\frac1\alpha}\geq c\Cr{c:n>a}^{\frac1\alpha}a^{-\frac2\nu}\geq  \Cr{c:hkK}a^{-\frac2\nu}=  \Cr{c:hkK}\xi$ and $rq(\Cr{c:q}r)^{-\frac2\nu}\geq cn^{\frac1\alpha}a^{-\frac1\alpha}\geq c\Cr{c:n>a}^{\frac1\alpha}a^{-\frac2\nu}\geq \Cr{C:LBLneg}a^{-\frac2\nu}$ if $n\geq \Cr{c:n>a}a^{1-\frac{2\alpha}{\nu}}$ upon fixing the constant  $\Cr{c:n>a}$ large enough. Therefore for any $K\subset\tilde{B}(\xi)$ as in \eqref{eq:K-ass}, the inequality on the right-hand side of \eqref{eq:boundParF} is satisfied. Combining this with \eqref{eq:clustCardLB}, \eqref{eq:capUB}, \eqref{eq:choicebr} and since $\sup q<\infty$, we obtain that if  ${\rm cap} ( K) \ge \Cr{C:lawcaploc}\xi^{\nu}q(\xi)^{-2}$
\begin{equation*}
\begin{split} 
\PK \big(F(K,a{\mathbf{h}}_K,r, n)\big)
    \ge c\exp\{-Ca^2r^{\nu}\}\geq c\exp\big(-C'a^{2-\frac\nu\alpha}n^{\frac\nu\alpha}\big).
\end{split}
\end{equation*}
Combining this with \eqref{eq:clustCapLocal} for $2a$ instead of $a$ and $s=\Cr{C:lawcaploc}\xi^{\nu}q(\xi)^{-2}=\Cr{C:lawcaploc}(2a\wedge1)^{-2}q(\xi)^{-2}$, which is in force since $s\leq 1\vee (2a)^{-2}$ by \eqref{eq:boundq} upon possibly slightly decreasing the constant $\Cr{C:lawcaploc}$, we thus deduce from \eqref{eq:LBcond0} that
\begin{equation}
\label{eq:lbfinal}
\begin{split} 
\P\big(|\mathcal K^{a}| 
    \ge n \big)\geq \P\big(|\mathcal K^{a}_{6\Cr{C:exit2}r}| 
    \ge n \big)&\geq 
    c\xi^{-\nu/2}\exp\big(-Ca^{2-\frac\nu\alpha}n^{\frac\nu\alpha}\big)
    \\&\geq c'n^{-\frac{\nu}{2\alpha-\nu}}\exp\big(-Ca^{2-\frac\nu\alpha}n^{\frac\nu\alpha}\big),
\end{split}
\end{equation}
where we used that $\xi^{-\nu/2}\geq ca$ and $n\geq \Cr{c:n>a}a^{1-\frac{2\alpha}{\nu}}$ in the last inequality. We have thus proved \eqref{eq:tailEstVolgen} for $n\geq \Cr{c:n>a}a^{1-\frac{2\alpha}{\nu}}$ and $a\in{(0,1]}$, and now take any $n\geq\Cr{c:n>a}$ and $a\in{[0,1]}$ such that $n< \Cr{c:n>a}a^{1-\frac{2\alpha}{\nu}}$ (with $0^{1-\frac{2\alpha}{\nu}}=\infty$). Let $a'\stackrel{\text{def.}}{=} (\Cr{c:n>a}/n)^{\frac\nu{2\alpha-\nu}}$, and note that $a'\in{(0,1]}$ and $n\geq \Cr{c:n>a}a'^{1-\frac{2\alpha}{\nu}}$. We thus just showed that $(a',n)$ satisfies \eqref{eq:tailEstVolgen}, and since $a'\geq a$ and $(a')^{2-\frac\nu\alpha}n^{\frac\nu\alpha}=c$, we deduce that
\begin{equation*}
\begin{split} 
\P\big(|\mathcal K^{a}| 
    \ge n \big)\geq \P\big(|\mathcal K^{a'}| 
    \ge n \big)\geq cn^{-\frac{\nu}{2\alpha-\nu}}\geq c'n^{-\frac{\nu}{2\alpha-\nu}}\exp\big(-Ca^{2-\frac\nu\alpha}n^{\frac\nu\alpha}\big).
\end{split}
\end{equation*}
Finally, the case $1\leq n\leq \Cr{c:n>a}$ follows from \eqref{eq:tailEstVolgen} for $n=\Cr{c:n>a}$ up to changing the constants $c,C$ therein.
\end{proof}

Let us now provide the
\begin{proof}[Proof of Lemma~\ref{lem:boundParF}]
 Let $S=\Lambda(r)\cap B(5\Cr{C:exit2}r)\setminus B(2\Cr{C:exit2}r)$, where $\Lambda(r)$ is the set from \cite[Lemma~6.1]{DrePreRod2}. Then in view of \cite[(6.1) and (6.2)]{DrePreRod2} and since $\Cr{C:exit2}\geq1$, the set $S$ satisfies
\begin{equation}
\label{eq:propertiesS}
\begin{split} 
|S|\leq C,\ B(4\Cr{C:exit2}r)\setminus B(3\Cr{C:exit2}r)\subset \bigcup_{x\in{S}}B(x,r)\text{ and } \bigcup_{x\in{S}}B(x,\Cr{C:exit2}r)\subset {B}(6\Cr{C:exit2}r)\setminus B(\Cr{C:exit2}r).
\end{split}
\end{equation}
Note  that  for any $x\in{S}$ and  $y\in{G}$, the set $\K^{-b}_{\tilde{B}(x,r)}(y)$ is connected and included in $\tilde{B}(x,r)\subset\tilde{\G}_K\cap A(a,r)$ if $r\geq \Cr{c:hkK}\xi$ by \eqref{eq:hkK} and \eqref{eq:propertiesS}. Notice further that by definition, see below \eqref{eq:hkK} and \eqref{eq:LBcond0}, for all $x\in{\tilde{B}(6\Cr{C:exit2}r)}$
\begin{equation*}
\begin{split} 
a\tilde{\mathbf{h}}_{K}(x)&=a\big(1-2P_x(H_{K}<\infty))-2P_x^K(H_{K}<\infty)\big)
\\&=a\big(1-2(P_x(H_{K}<\infty))-P_x(H_{A_{a,r}}<H_K))\big)\leq -a,
\end{split}
\end{equation*}
where in the last inequality we used that any path starting from $x$ either hit $K\subset\tilde{B}(\xi)$, or hit $A_{a,r}$ before hitting $K$, see below \eqref{eq:LBcond0}. Therefore
\begin{equation}
\label{eq:PFvstildeF}
\begin{split} 
\PK\big(F(K,a\tilde{\mathbf{h}}_K,r, n)\big)\geq\PK \big(\tilde{F}(K,a,r, n)\big),
\end{split}
\end{equation}
where 
\begin{equation}
\label{eq:AKary3}
    \tilde{F}(K,a,r, n)\stackrel{\text{def.}}{=}\left\{\begin{array}{c}
         \text{$\exists\, x\in{S},\ y\in{G}$ with $|\K^{-a}_{\tilde{B}(x,r)}(y)|\geq n$, and a}\\
    \text{continuous path }\pi\text{ in }\tilde{\G}_K\cap\tilde{B}(6\Cr{C:exit2}r)\text{ from }K\text{ to}
    \\\K^{-a}_{\tilde{B}(x,r)}(y)\text{ with }\phi_x\geq -  a \text{ for all }x\in \pi \\
    \end{array}\right\}.
\end{equation}
In order to construct the path $\pi$ in \eqref{eq:AKary3}, we are going to use the isomorphism \eqref{eq:isom} with interlacements. Writing $ u= \frac{a^2}{8}$, it indeed follows directly from \eqref{eq:isom} that under $\OPK\otimes \PK$,
\begin{equation}
\label{eq:isom3}
\begin{gathered} 
      \mathcal{I}^u\cup \big\{x\in{\tilde{\G}_K}:\phi_x\geq -a/2\big\}\text{ is stochastically dominated by }\big\{x\in{\tilde{\G}_{ K}}:\,\phi_x\geq-a\big\}.
\end{gathered}
\end{equation}
For $s>0$, let us denote by $A_s$ the event 
\begin{equation}
\label{eq:defAs}
\begin{split} 
A_s\stackrel{\text{def.}}{=}\bigcap_{x\in{S}}\Big\{\exists\, y\in G:\, \big| {\K}^{-a/2}_{\tilde B(x,r)}(y)\big| \ge n, \, \mathrm{cap}\big({\K}^{-a/2}_{\tilde B(x,r)}(y)\big)\geq sr^{\nu}\Big\}.
\end{split}
\end{equation}
On the event $A_s$ let us fix for each $x\in{S}$ an arbitrary $y_x\in{B(x,r)}$ such that $\big| {\K}^{-a/2}_{\tilde B}(y_x)\big| \ge n$ and  $\mathrm{cap}\big({\K}^{-a/2}_{\tilde B}(y_x)\big)\geq sr^{\nu}$.
Write $\mathcal I_-^{u}$ for the trajectories in $\mathcal I^{u}$ whose closure in $\tilde{\G}$ hit $K$, stopped the first time they exit $\tilde{B}(6\Cr{C:exit2}r)$. Consequently, defining on the event $A_s$ 
\begin{equation}
\label{eq:AKary}
    A'\stackrel{\text{def.}}{=}\bigcup_{x\in{S}}\Big\{\mathcal I_-^{u} \cap {\K}^{-a/2}_{\tilde B(x,r)}(y_x)\neq\emptyset\Big\},
\end{equation}
we infer using \eqref{eq:AKary3}, \eqref{eq:isom3}, \eqref{eq:defAs} and \eqref{eq:AKary} that for any $s>0$
\begin{equation} \label{eq:realizeA}
\PK \big(\tilde{F}(K,a,r, n)\big)\geq \PK\otimes\OPK(A_s\cap A')=\EK\big[1\{A_s\}\OPK(A')\big].
\end{equation}
The probability of the event $A_s$ for an appropriate choice of $s$ can be bounded using Proposition~\ref{pro:manyMeso}, and we now lower bound the probability of $A'$. It follows from a reasoning similar to the second inequality in \cite[(7.18)]{DrePreRod5} and our assumption \eqref{eq:assumpParF} that 
\begin{equation}
\label{eq:lbIu-}
\begin{split} 
\OPK\big(\mathcal{I}^u_-\cap B(3\Cr{C:exit2}r)^{\mathsf c}\cap G\neq\emptyset\big)&\geq \big(1-\exp\{-(a^2/8)\mathrm{cap}(K)\}\big)\geq ca^2\mathrm{cap}(K),
\end{split}
\end{equation}
where in the second inequality we also used the inequality $\xi^{\nu}\geq a$, and in the last one that $\mathrm{cap}(K)\leq C\xi^{\nu}=Ca^{-2}$ by \eqref{eq:capBallBd}. By \cite[(2.8)]{DrePreRod2}, the first vertex $z$ visited by $\I_-^u$ after first exiting $B(3\Cr{C:exit2}r)$ belongs to $B(4\Cr{C:exit2}r)\setminus B(3\Cr{C:exit2}r)$ if $r\geq C$, and thus belongs to $B(x,r)$ for some $x\in{S}$ by \eqref{eq:propertiesS}. Using the Markov property for the first trajectory of $\I^u_-$ which exits $B(3\Cr{C:exit2}r)^{\mathsf c}$, and recalling from below \eqref{eq:consMark} that $\Cr{C:exit2}\geq \Cr{C:exit}$, it thus follows from \eqref{eq:propertiesS}, \eqref{eq:AKary} and \eqref{eq:lbIu-} that on the event $A_s$
\begin{align}\label{eq:ALBsplit}
\begin{split}
\OPK(A') 
&\ge ca^2\mathrm{cap}(K)\inf_{x\in{S}}\inf_{z\in{B(x,r)}}P_z^K\Big(H_{{\K}^{-b/2}_{\tilde B(x,r)}(y_x)}<H_{\tilde{B}(x,\Cr{C:exit}r)^{\mathsf c}}\Big)
\\&\geq ca^2\mathrm{cap}(K)r^{-\nu}\mathrm{cap}_{\tilde{\G}_K}\big({{\K}^{-b/2}_{\tilde B(x,r)}(y_x)}\big)\geq csa^2\mathrm{cap}(K),
\end{split}
\end{align}
where we used \eqref{eq:hittingvscap} in the second inequality, which is in force in view of \eqref{eq:propertiesS} if $r\geq \Cr{c:hkK}\xi$, and the definition of ${\K}^{-b/2}_{\tilde B(x,r)}(y_x)$ in the last one.

Note that $d(x,K)\geq \Cr{C:dxK}r$ for each $x\in{S}$ by \eqref{eq:propertiesS} since $K\subset\tilde{B}(\xi)\subset \tilde{B}(r)$ and $\Cr{C:exit2}\geq\Cr{C:dxK}+1$, see  below \eqref{eq:consMark}. If $s=cq(\Cr{c:q}r)^{-2}$ and the conditions in \eqref{eq:assumpParF} are satisfied, one can thus lower bound $\PK(A_s)$ by $cq(\Cr{c:q}r)^{-C}$ in view of \eqref{eq:LBL2}, \eqref{eq:propertiesS} and the FKG inequality. Combining this with \eqref{eq:PFvstildeF}, \eqref{eq:realizeA} and \eqref{eq:ALBsplit} yields \eqref{eq:boundParF}.
\end{proof}

It remains to give the

\begin{proof}[Proof of Corollary~\ref{C:offcritvol-mom}]
 We start with \eqref{eq:offcritvol-mom2} and write, for arbitrary $k>0$, substituting $s=r^{1/k}$ below,
\begin{align}\label{eq:k-mom}
\begin{split}
\E\big[|\mathcal K^a|^k 1\{ |\mathcal K^a| < \infty\}\big]
&= \int_0^\infty \P\big( r^{1/k} \leq |\mathcal K^a| 
   <\infty \big)\, {\rm d}r\\
   &\geq k \int_1^{\infty} s^{k-1} \P\big( s \leq |\mathcal K^a| 
   <\infty \big) \, {\rm d}s.
\end{split}
\end{align}
The assumption $\sup q< \infty$ entails that Theorem~\ref{thm:volLB},(i) applies, and substituting \eqref{eq:tailEstVolgen} into \eqref{eq:k-mom} while performing the changes of variables $t=s|a|^{\frac{2\alpha}{\nu}-1} $ yields that $\E[|\mathcal K^a|^k 1\{ |\mathcal K^a| < \infty\}]$ is bounded from below for $|a| \leq 1$ by
\begin{align}\label{eq:kmom-pf2}
\begin{split}
ck \int_{|a|^{\frac{2\alpha}{\nu}-1}}^{\infty}  
\big(|a|^{1-\frac{2\alpha}{\nu} } t\big)^{k-1-\frac{\nu}{2\alpha - \nu}} e^{-Ct^{\frac\nu\alpha}}
 |a|^{1-\frac{2\alpha}{\nu}} \, {\rm d}t  \\ \geq ck |a|^{(1-\frac{2\alpha}{\nu})(k- \frac{\nu}{2\alpha - \nu})} \int_1^{\infty}  t^{k-1-\frac{\nu}{2\alpha - \nu}} e^{-Ct^{\frac\nu\alpha}} \, {\rm d}t, 
\end{split}
\end{align}
where, in obtaining the right-hand side, we have used that the exponent $\frac{2\alpha}{\nu} -1 > 0$ since $\alpha \geq \nu$ (see below \eqref{eq:intro_Green}) and the fact that $|a| \leq 1$ to derive the upper bound on the lower integration limit. The claim \eqref{eq:offcritvol-mom2} now follows for $|a| \leq 1$ since the exponent of $|a|$ appearing in \eqref{eq:kmom-pf2} can be recast as $(1-\frac{2\alpha}{\nu})(k- \frac{\nu}{2\alpha - \nu})=1-k \frac{2\alpha - \nu}{\nu}$.

It remains to show \eqref{eq:offcritvol-mom1}. One now uses the fact that $\text{cap}(K)< \infty$ implies $|K|< \infty$ under our standing assumptions on $\G$ (as follows from \cite[Lemma A.2]{DrePreRod3}; in fact, \eqref{eq:intro_Green} alone is enough since it implies in particular a uniform upper bound for the Green function on the diagonal) and subadditivity, i.e.~\eqref{eq:subadd}, to conclude that for $n\geq C$ we have
\begin{equation}
\label{eq:weakupperboundcarrd}
\begin{split} 
\P\big( n \leq |\mathcal K^a| 
   <\infty \big) \geq \P\big( Cn \leq \text{cap}(\mathcal K^a) 
   <\infty \big)\geq cn^{-\frac12}e^{-Ca^2n} ,
\end{split}
\end{equation}
where the last inequality follows from \eqref{eq:capTail}.
Feeding this into \eqref{eq:k-mom} and performing the change of variables $t=|a|^2 s$ thus yields that 
$$
\E\big[|\mathcal K^a|^k 1\{ |\mathcal K^a| < \infty\}\big] \geq ck  \int_C^{\infty} \big( |a|^{-2}t\big)^{k-1-\frac12} e^{-Ct}  |a|^{-2}\, {\rm d}t ,
$$  
for all $|a|\leq 1$, from which \eqref{eq:offcritvol-mom1} readily follows.
\end{proof}

\begin{Rk}
\phantomsection\label{rk:final}
\begin{enumerate}[1)]
    \item\label{rk:optimalboundmeanfield}
 As explained below Corollary~\ref{C:offcritvol-mom}, \eqref{eq:offcritvol-mom1} is expected to be sharp in the mean-field regime $2\alpha<3\nu$. Since it is a direct consequence of the simple lower bound \eqref{eq:weakupperboundcarrd}, and that this lower bound is actually sharp when $a=0$ in this regime on the square lattice by \cite[Theorem~1.2]{cai2023onearm}, one may be tempted to conclude that \eqref{eq:weakupperboundcarrd} is also sharp in the mean-field regime. However, Theorem~\ref{thm:volLB}, (ii) shows that this is not the case and that $\P(|\K^a|\geq n)$ in fact decays essentially as $\exp\{-cn^{\frac\nu\alpha}\}$ for $c=c(a)$. We conjecture that the correct decay when $2\alpha<3\nu$ is
\begin{equation}
\label{eq:conjecturemeanfield}
\begin{split} 
cn^{-\frac12}\exp\big\{-Ca^\frac{2\nu}{\alpha}n^{\frac\nu\alpha}\big\}\leq \P\big(|\K^a|\geq n\big)\leq Cn^{-\frac12}\exp\big\{-ca^\frac{2\nu}{\alpha}n^{\frac\nu\alpha}\big\}. 
\end{split}
\end{equation}
for all $a\in{[0,1]}$ and $n\geq1$. Note that the conjecture \eqref{eq:conjecturemeanfield} would also directly yield \eqref{eq:offcritvol-mom1} as well as a matching upper bound, that it is compatible with \eqref{eq:easyoffcriticalbounds}, and that when $2\alpha=3\nu$ it agrees with the bound \eqref{eq:tailEstVolgen} (conjectured optimal up to constants when $2\alpha>3\nu$). Since \eqref{eq:conjecturemeanfield} differs for $a\neq0$ from the case of independent percolation as explained below \eqref{eq:offcritvol-n}, the proof of \eqref{eq:conjecturemeanfield} would probably require to use tools different from those of independent percolation, contrary to the the critical case $a=0$ from \cite{cai2023onearm}.

\item\label{rk:boundinanydimension} Even if $\sup q=\infty$, it is possible to prove that \eqref{eq:tailEstVolgen}, resp.\ \eqref{eq:tailEstlargestVolgen}, for $a\neq0$ remains true under the additional assumption $n\geq C|a|^{-C'}$, resp.\ $r\geq C|a|^{-C'}$, where $C'$ is a large constant. The proof is exactly the same as the proof of \eqref{eq:tailEstVolgen}, resp.\ \eqref{eq:tailEstlargestVolgen}, except one replaces the use of \eqref{eq:LBL2} by \eqref{eq:LBL4}, and replaces $\xi=a^{-\frac2\nu}$ by $\xi=a^{-C}$ for a large enough constant $C$ so that ${\rm cap} ( \K_\xi^{2a}) \ge a^{-2/\nu}$ with probability at least $a^{-2}$ using \eqref{eq:boundq} and \eqref{eq:clustCapLocal}, which allows us remove the dependency on $q$ in \eqref{eq:lbIu-} and then \eqref{eq:boundParF}. We leave the details to the reader. Note that in view of the conjecture \eqref{eq:conjecturemeanfield}, one actually expects \eqref{eq:tailEstVolgen} to be true for all $n\geq1$ and $a\in{[-1,1]}$ on any graph satisfying \eqref{eq:ellipticity}, \eqref{eq:intro_sizeball} and \eqref{eq:intro_Green}. This is however not the case for \eqref{eq:tailEstlargestVolgen} since by \cite[Proposition~3]{werner2020clusters}, $\LV_r^0$ is smaller than  $r^4\log(r)= o(r^{\frac{d+2}{2}})$(=$o(r^{\alpha-\frac\nu2})$) with probability going to one on $\Z^d$ for all $d\geq7$. In particular, \eqref{eq:LBL4} is not satisfied for all $r\geq Ca^{-2/\nu}$ in dimension $d\geq7$, since otherwise it would imply \eqref{eq:tailEstlargestVolgen} by proceeding similarly as in the proof of Theorem~\ref{the:mainMra}.
 \item \label{rk:d=6}
When $\sup q=\infty$, it is possible to extend \eqref{eq:tailEstVolgen} but with an additional dependency on $q$ in the exponential. For instance on $\Z^6$, it was proved \cite{cai2024onearm} that $q(r)\leq \tilde{q}(r)$, where $\tilde{q}(r)=1 \vee \exp\{C \sqrt{\log r} \log \log r\}$ is subpolynomial. One then obtains that, up to changing the constant $C$ in the definition of $\tilde{q}(r)$, for all $a\in{[-1,1]\setminus\{0\}}$ and $n\geq C|a|^{-2}\tilde{q}(|a|^{-1})$,
\begin{equation} \label{eq:offcritvol-Z6-LB}
\P\big( n \leq |\mathcal K^a| <\infty \big) \ge
 c\exp \big\{ -C |a|^{\frac{8}{6}}n^\frac{4}{6} \tilde{q}(n/a)\big\}.  
\end{equation}
The exponential correction in \eqref{eq:offcritvol-Z6-LB} is the same as in lower dimension \eqref{eq:offcritvol-Zd-LB}, but with additional subpolynomial corrections. One actually expects that $q(r)\leq \log(r)^C$, which if proved would improve \eqref{eq:offcritvol-Z6-LB} by replacing $\tilde{q}(n/a)$ by $\log(n/a)^{C'}$ therein. Note that the polynomial correction in \eqref{eq:offcritvol-Zd-LB}, corresponding to the critical factor, was absorbed in the exponential of \eqref{eq:offcritvol-Z6-LB} since $\exp\{-\tilde{q}(n/a)\}$ is always super-polynomial in $n$ under our conditions. In particular, one cannot use our proof to obtain the lower bound \eqref{eq:critvol-Zd} at criticality $a=0$, but the same bound actually already follows in dimension six from the law of the capacity, see \eqref{eq:critvol-Z6}. We leave the proof of \eqref{eq:offcritvol-Z6-LB} to the reader, and only indicate that one needs to adapt the definition of $r$ in \eqref{eq:choicebr} so that the last conditions appearing in \eqref{eq:assumpParF} and \eqref{eq:boundParF} are still trivially verified under the condition $q(r)\leq \tilde{q}(r)$. A lower bound similar to \eqref{eq:tailEstlargestVolgen} for $r\geq Ca^{-\frac2\nu}\tilde{q}(a^{-1})$ and $a>0$, but adding $\tilde{q}(r)$ in the exponential,  can also be similarly obtained. Note that by the previous remark, one can in fact remove the factor $\tilde{q}(n/a)$ in \eqref{eq:offcritvol-Z6-LB} if $n\geq C|a|^{-C'}$, but for a constant $C'$ a priori much larger than $2$.

\item \label{rk:volumeinball} As the attentive reader might have noticed, see for instance \eqref{eq:choicebr} and \eqref{eq:lbfinal}, the lower bound  \eqref{eq:tailEstVolgen} is actually still valid when replacing $|\K^a|$ by $|\K_r^a|$ for any $r\geq C\big((n/|a|)^{\frac1\alpha}\wedge n^{\frac2{2\alpha-\nu}}\big)$, and similarly for \eqref{eq:easyoffcriticalbounds} if $r\geq Cn^{\frac1\alpha}$. In particular taking $a=0$ and summing in $n$ we deduce that if $\sup q<\infty$, then for all $k\geq1$
\begin{equation}
\label{eq:momentKra}
\begin{split} 
\E[|\K^0_r|^k]\geq cr^{\frac{(2\alpha-\nu)k-\nu}2}
\end{split}
\end{equation}
for some constant $c=c(k)$. The lower bound \eqref{eq:momentKra} is sharp up to constants for $k=1$ by \cite[Proposition~5.2]{MR3502602}, and we expect that this is actually the case for all $k\geq1$ if $\sup q<\infty$.

Moreover, one can deduce from the previous observation a lower bound similar to \eqref{eq:tailEstVolgen} or \eqref{eq:easyoffcriticalbounds} when replacing $|\K^a|$ therein by $\LV_r^a$ for $r$ as before. By a union bound, the upper bound in \eqref{eq:easyoffcriticalbounds} also holds when replacing $|\K^a|$ therein by $\LV_r^a$, but with an additional logarithmic factor in $r$, which can be removed whenever $n\geq Ca^{-\frac{2\alpha}\nu}\log(r)^{\frac\alpha\nu}.$

\item \label{rk:wernerconjecture} The conjecture from \cite[Conjecture~A and C]{werner2020clusters} postulates that on $\Z^d$ for $d\in{\{3,4,5\}}$, the set $\{\phi\geq0\}$ has a scaling limit, that the clusters in the scaling limit have fractal (or Hausdorff) dimension $(d+2)/2$, and that there are only finitely many such clusters in a finite domain with diameter greater than $s$ for any $s>0$. Our proof can be adapted to show that the average number of clusters with diameter larger than $s$ and fractal dimension larger than $(d+2)/2$ is of constant order for $s$ small enough, which is a first step toward the previous conjecture. More precisely, the corresponding result in the discrete setting and on general graph is that if $\sup q<\infty$, then for any $0<s\leq c$, there exist $c,C(s)\in{(0,\infty)}$ so that for all $r\geq1$ we have
\begin{equation}
\label{eq:numberbigradiusandvolume}
\begin{split} 
c\leq \E\Big[\big|\big\{K 
\, : \, & \text{$K$ cluster of }\{x\in{\tilde{B}(r)}:\phi_x\geq0\} \text{ such } \\
&\text{ that } |K|\in{[cr^{\alpha-\frac{\nu}2},Cr^{\alpha-\frac\nu2}]},\delta(K)\geq sr\big\}\big|\Big]\leq C,
\end{split}
\end{equation}
where $\delta(K)=\sup\{d(x,y):\,x,y\in{K\cap G}\}$ is the diameter of a set $K$. In order to obtain \eqref{eq:numberbigradiusandvolume}, one simply notices that the set $\hat{\I}^u_{\tilde B}\cup\tilde{\mathcal H}(x,r, u^{-1}, \hat{\I}^u_{\tilde B})$ constructed below \eqref{eq:tildeH} has diameter at least $\geq cr$ since it contains interlacements trajectories started in $B(x,r)$ and ending in $B(x,\Cr{C:exit}r)^{\mathsf c}$, and so one deduces a version of \eqref{eq:LBL2} with an additional event $\delta(\K^{-a}_{\tilde{B}}(y))\geq cr$ inside the probability. By the change of measure \eqref{eq:entropy}, see the proof of \eqref{eq:tailEstlargestVolgen} for details, one can then deduce that with constant probability there exists an $x\in{B(r)}$ so that $\K_r^0(x)$ has cardinal at least $cr^{\alpha-\frac\nu2}$ and diameter at least $cr$. Combining this with Proposition~\ref{pro:upperMRa} for $\eps$ small enough, we obtain
\begin{equation}
\label{eq:bigradiusandvolume}
\begin{split} 
\P\big(\exists\, x\in{B(r)}:|\K^0_r(x)|\in{[cr^{\alpha-\frac{\nu}2},Cr^{\alpha-\frac\nu2}]},\delta(\K^0_r(x))\geq cr\big)\geq c,
\end{split}
\end{equation}
Note that \eqref{eq:bigradiusandvolume} directly implies the lower bound in \eqref{eq:numberbigradiusandvolume} for $s\leq c$. For the upper bound, one simply notices that the probability in \eqref{eq:numberbigradiusandvolume} is smaller than $Cr^{-(\alpha-\frac\nu2)}$ times the average number of $x\in{B(r)}$ with $\delta(\K^0(x))\geq sr$, which is smaller than a constant (depending on $s$) if $\sup q<\infty$ by \eqref{eq:intro_sizeball}, \eqref{eq:lambdabounded} and \eqref{eq:qPsiBd}.
\end{enumerate}
\end{Rk}

\appendix
\setcounter{secnumdepth}{0}
\section{Appendix: Lower bound in the mean-field regime}
\renewcommand*{\theThe}{A.\arabic{The}}
\setcounter{The}{0}
\setcounter{equation}{0}
\renewcommand{\theequation}{A.\arabic{equation}}

Recall the definition of $q(\cdot)$ from \eqref{eq:qPsiBd}. We derive here a lower bound on $M_r^0$, which is expected to be sharp up to logarithm in the mean-field regime $2\alpha\leq 3\nu$, and whose proof follow ideas from Bernoulli percolation \cite[Thm. 13.22]{HeHo-17}. 

\begin{Prop}
\label{pro:meanfieldlower}
For all $r,t\geq1$ with $q(r/2)^{-2}r^{\nu}\geq Ct$
\begin{equation}
\label{eq:meanfieldlower}
\begin{split} 
\P\big(\LV^0_r\geq (1/t)q(r/2)^{-2}r^{\nu}\big)
            \geq 1- Ct^{-1}.
\end{split}
\end{equation}
\end{Prop}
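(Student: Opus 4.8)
The plan is to produce inside $\widetilde{B}(r)$ a single connected component of $\{\varphi\geq 0\}$ whose capacity --- hence, by the subadditivity bound \eqref{eq:subadd}, whose volume --- is of order $q(r/2)^{-2}r^\nu$ up to a factor $1/t$, with probability at least $1-Ct^{-1}$. By \eqref{eq:subadd} it suffices to control the capacity, and throughout one should read $q(r/2)^{-2}r^\nu$ as being of the same order as $s_{r/2}$, with $s_\cdot$ as in \eqref{eq:rCond}.

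\emph{Single-box input.} For $\rho\geq C$ and any $x\in G$, Lemma~\ref{lem:manyPtsInTypCl} with $K=\emptyset$, together with its Paley--Zygmund consequence \eqref{eq:ELB}, applied with $s=s_{\rho/2}$, gives $\P(E(x,\rho))\geq c_0$ for a universal $c_0>0$, where $E(x,\rho)$ denotes the event that some $y\in B(x,\rho)$ has $\mathrm{cap}_{\widetilde{\G}}\big(\K^0_{\widetilde{B}(x,\rho)}(y)\big)\geq s_{\rho/2}$; here one uses that $\tfrac12\E\big[|\mathcal D(x,\rho,s_{\rho/2})|\big]\geq c\,\rho^{\alpha}s_{\rho/2}^{-1/2}\geq 1$ once $\rho\geq C$, since $s_{\rho/2}\leq C\rho^\nu$ and $\alpha-\tfrac\nu2>0$. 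On $E(x,\rho)$ there is a cluster contained in $\widetilde{B}(x,\rho)$ of capacity $\geq s_{\rho/2}$, hence of volume $\geq c\,s_{\rho/2}$.

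\emph{Amplification through the loop soup.} Upgrading the constant $c_0$ to $1-Ct^{-1}$ requires independent ``trials''. The long-range correlations of $\varphi$ prevent the events $E(x_i,\rho)$ for disjoint boxes from being independent (they are only positively associated, which is the wrong direction); the standard remedy --- used already in the proof of \eqref{eq:tailEstlargestVolgena=0} --- is to pass, via the loop-soup isomorphism \cite[Proposition~2.1]{MR3502602}, to \emph{loops confined to disjoint boxes}, equivalently to independent copies of the Dirichlet free field on those boxes, for which the single-box estimate above still holds by the same second-moment computation (the relevant Green function and connectivity bounds being comparable to the free ones for points well inside the box). Concretely, fix $\rho\in[C,r/4]$ and a maximal $4\rho$-separated set $x_1,\dots,x_M$ in $B(r/2)$, so $M\geq c(r/\rho)^\alpha$ by \eqref{eq:intro_sizeball} and the $\widetilde{B}(x_i,2\rho)\subset\widetilde{B}(r)$ are disjoint. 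The events $E'(x_i,\rho)$ --- ``the loops of $\mathcal L$ entirely contained in $\widetilde{B}(x_i,2\rho)$ have a positive cluster of volume $\geq c\,s_{\rho/2}$'' --- are then independent ($\mathcal L$ being Poisson and the boxes disjoint), each has probability $\geq c_0$, and each forces $\LV^0_r\geq c\,s_{\rho/2}$. Hence $\P(\LV^0_r\geq c\,s_{\rho/2})\geq 1-(1-c_0)^M\geq 1-e^{-c(r/\rho)^\alpha}$ for every $\rho\in[C,r/4]$.

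\emph{Choice of scale, and the main point.} Given $r,t$ with $q(r/2)^{-2}r^\nu\geq Ct$, choose $\rho$ with $(r/\rho)^\alpha\asymp\log t$, so the error term is $\leq Ct^{-1}$ (for $t$ above a constant and $r$ large, the remaining cases being trivial after adjusting constants); it then remains to check $c\,s_{\rho/2}\geq (1/t)\,q(r/2)^{-2}r^\nu$. Writing $\pi(\rho)=\sup_{x}\P\big(x\stackrel{\geq 0}{\longleftrightarrow}\partial\widetilde{B}(x,\rho)\big)$, one has $s_{\rho/2}\asymp\pi(\rho/2)^{-2}$ and $q(r/2)^{-2}r^\nu\asymp\pi(r/2)^{-2}$, so this reduces to the bound $\pi(\rho/2)\leq C\sqrt{t}\,\pi(r/2)$; since $r/\rho$ is only polylogarithmic in $t$, it follows from the a priori two-sided estimates \eqref{eq:boundq} on $q$, transparently so whenever $q$ does not change by more than a constant between comparable scales --- in particular on $\Z^\alpha$ and in the mean-field regime $2\alpha\leq 3\nu$ of interest here. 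This last step --- carrying the macroscopic scale $r$ down to the scale $\rho$ at which enough disjoint boxes fit, without losing more than a factor $t$ in the guaranteed cluster size --- is the crux; everything else is the independence-across-disjoint-boxes device that substitutes for the near-independence available in the high-dimensional Bernoulli setting of \cite[Theorem~13.22]{HeHo-17}.
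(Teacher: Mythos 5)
There is a genuine gap at the step you yourself single out as the crux. Your amplification scheme forces you to work in boxes of radius $\rho\asymp r(\log t)^{-1/\alpha}$, and the cluster size it produces is of order $s_{\rho/2}\asymp q(\rho/2)^{-2}\rho^{\nu}$, so you must verify $q(\rho/2)^{2}\leq c\,t\,(\rho/r)^{\nu}q(r/2)^{2}$, i.e.\ essentially that the one-arm probability does not drop by more than a factor $\sqrt{t}$ between the scales $\rho$ and $r$. Nothing in the standing hypotheses \eqref{eq:ellipticity}, \eqref{eq:intro_sizeball}, \eqref{eq:intro_Green} gives this: monotonicity of the one-arm event only yields $q(\rho/2)\geq (\rho/r)^{\nu/2}q(r/2)$, which is the wrong direction, and the a priori bounds \eqref{eq:boundq} only give $q(\rho/2)^2/q(r/2)^2\leq C\rho^{(\nu-1)\vee 0}\log\rho$, which is polynomial in $r$ and hence not controlled by $t$ (the hypothesis $q(r/2)^{-2}r^{\nu}\geq Ct$ allows $t$ to be much smaller than any power of $r$). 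What you need is a quasi-multiplicativity/regularity property of $q$ across nearby scales; it holds on $\Z^{\alpha}$ and in concrete mean-field examples, but Proposition~\ref{pro:meanfieldlower} is stated for general graphs under the standing assumptions, so your argument proves a strictly weaker statement conditional on an extra hypothesis. A secondary, fixable issue: your single-box input is for loops confined to $\tilde B(x_i,2\rho)$, i.e.\ a field with Dirichlet conditions just outside the box, and Lemma~\ref{lem:manyPtsInTypCl}/\eqref{eq:clustCapLocal} are proved for killing on a \emph{compact} set $K$ disjoint from the box; the clean repair is the crossing-loop comparison used in the proof of \eqref{eq:tailEstlargestVolgena=0} (no loop joining $\tilde B(x,\rho)$ to $\tilde B(x,\zeta\rho)^{\mathsf c}$, with $\zeta$ a large constant), not a "comparable Green function" assertion, and it costs a constant that must be beaten by the constant $c_0$.

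For comparison, the paper avoids any change of scale and any independence device. It works directly at scale $r$: for each $x\in B(r/2)$, the capacity law \eqref{eq:capTail} at the threshold $(1/t)q(r/2)^{-2}r^{\nu}$, together with \eqref{eq:subadd} and the one-arm bound \eqref{eq:qPsiBd}, gives $\P\big(|\K^0_{r/2}(x)|\geq s_r\big)\geq c\,t^{1/2}q(r/2)r^{-\nu/2}$ — note the gain of $t^{1/2}$, which is where the eventual $1-Ct^{-1}$ comes from. Summing over $x$ gives a first moment $\gtrsim t^{1/2}q(r/2)r^{\alpha-\nu/2}$ for the count $Z_r$ of such $x$, while the Markov property \eqref{eq:Markov2} plus the two-point function bound $\P(y\in\K^0(x))\leq Cd(x,y)^{-\nu}$ gives $\mathrm{Var}(Z_r)\leq Cr^{2\alpha-\nu}$; Chebyshev then yields $\P(Z_r=0)\leq Ct^{-1}$ with no comparison of $q$ at different scales needed. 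If you want to salvage your route, you would have to either add a regularity hypothesis on $q$ or replace the constant-probability single-box input by a $t$-dependent one — at which point you are back to the capacity-tail computation the paper uses.
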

Note that on $\Z^d$, $d\geq7$, we have \cite{cai2023onearm} that $q(r/2)^{-2}r^{\nu}\geq cr^{4}$, and so \eqref{eq:meanfieldlower} complements \cite[Proposition~3]{werner2020clusters}.  
\begin{proof}
First note that for any $x\in{B(r/2)}$ by subadditivity of capacity \eqref{eq:subadd}
\begin{equation*}
\begin{split} 
\P\big(|\K^0_{r/2}(x)|\geq (1/t)q(r/2)^{-2}r^{\nu}\big)&\geq \P(\mathrm{cap}(\K^0(x))\geq csq(r/2)^{-2}r^{\nu})-\P\big(x \stackrel{\geq 0}{\longleftrightarrow} \partial \widetilde{B}(x,r/2)\big)
\\&\geq ct^{\frac12}q(r/2)r^{-\nu/2}-q(r/2)(r/2)^{-\nu/2}\geq ct^{\frac12}q(r/2)r^{-\nu/2},
\end{split}
\end{equation*}
where the second inequality follows from \eqref{eq:capTail} and \eqref{eq:qPsiBd}, and the last inequality holds upon taking $t\geq\Cl{c:slbmeanfield}$ for a small enough constant $\Cr{c:slbmeanfield}>0$. In particular by \eqref{eq:intro_sizeball} and \eqref{eq:lambdabounded}, for all $t\geq \Cr{c:slbmeanfield}$ and $r\geq1$
\begin{equation}
\label{eq:firstmomentZr}
\begin{split} 
\E[Z_r]\geq ct^{\frac12}q(r/2)r^{\alpha-\nu/2},\text{ where }Z_r=\sum_{x\in{B(r)}}1\{|\K^0_r(x)|\geq s_r \}\text{ and }s_r=(1/t)q(r/2)^{-2}r^{\nu}.
\end{split}
\end{equation}
We will now use a second moment method to show that $Z_r$ is positive with large probability. First note that for all $x,y\in{B(r)}$, by the Markov property \eqref{eq:Markov2} with $K=\emptyset$ and $\K=\K^0(x)$, we have on the event $y\notin{\K^0(x)}$
\begin{equation*}
\begin{split} 
\P\big(|\K^0_r(y)|\geq s_r\,|\,\mathcal{A}_{\K^0(x)}^+\big)&=\PKK{\K^0(x)}\big(|\K^0_r(y)|\geq s_r\big)
\leq \P\big(|\K^0_r(y)|\geq s_r\big),
\end{split}
\end{equation*}
where the last inequality follows from the fact that the law of $\K^0_r(y)$ under $\PKK{\K^0(x)}$ is stochastically dominated by the law of $\K^0_r(y)$ under $\P$ by a similar argument as below \eqref{eq:capinball}. Noting that $\K^0_r(x)$ is $\mathcal{A}_{\K^0(x)}^+$ measurable in view of \eqref{eq:Markov1} and \eqref{eq:Kr}, we readily deduce that
\begin{equation*}
\begin{split} 
\P\big(|\K^0_r(x)|\geq s_r,|\K^0_r(y)|\geq s_r,y\notin{\K^0(x)}\big)
\leq \P\big(|\K^0_r(x)|\geq s_r\big)\P\big(|\K^0_r(y)|\geq s_r\big).
\end{split}
\end{equation*}
Since $\P(y\in{\K^0(x)})\leq Cd(x,y)^{-\nu}$ by \cite[Prop. 5.2]{MR3502602} and \eqref{eq:intro_Green}, we readily deduce that
\begin{equation}
\label{eq:secondmomentZr}
\begin{split} 
\text{Var}(Z_r)\leq C\sum_{x,y\in{B(r)}}(1\vee d(x,y))^{-\nu}\leq Cr^{\alpha}\sum_{k=0}^{\lceil\log_2(2r)\rceil}2^{k\alpha-k\nu}\leq Cr^{2\alpha-\nu},
\end{split}
\end{equation}
where the second inequality follows from \eqref{eq:intro_sizeball} and \eqref{eq:lambdabounded} after decomposing the ball $B(x,2r)$ in dyadic scales. For $t\geq\Cr{c:slbmeanfield}$, combining \eqref{eq:firstmomentZr}, \eqref{eq:secondmomentZr} and Chebyshev's inequality, we have
\begin{equation}
\label{eq:finalboundZr}
\begin{split} 
\P\big(Z_r\leq ct^{\frac12}q(r/2)r^{\alpha-\nu/2}\big)\leq \frac{Cr^{2\alpha-\nu}}{(t^{\frac12}q(r/2)r^{\alpha-\nu/2})^2}\leq Ct^{-1}.
\end{split}
\end{equation}
in view of \eqref{eq:boundq}. In particular, if $t\geq\Cr{c:slbmeanfield}$ and $q(r/2)r^{\alpha-\nu/2}\geq Ct^{\frac{1}{2}}$,  $Z_r\geq 1$ with  probability at least $1-Ct^{-1}$, which implies \eqref{eq:meanfieldlower} in view of \eqref{eq:defVra} and \eqref{eq:Kr}. Note further that the condition $\ q(r/2)^{-2}r^{\nu}\geq Ct$ implies that $q(r/2)r^{\alpha-\nu/2}\geq Ct^{\frac{2\alpha-\nu}{2\nu}}\geq Ct^{\frac{1}{2}}$, where we used \eqref{eq:boundq} as well as our assumption $\alpha\geq \nu+2$, see below \eqref{eq:intro_Green}. Noting finally that \eqref{eq:meanfieldlower} for $t\leq\Cr{c:slbmeanfield}$ is trivial up to changing the constant $C$ therein, we can conclude.
\end{proof}

As the attentive reader will have noticed, on $\Z^d$ for all $d\geq7$, the above proof (see in particular \eqref{eq:finalboundZr}) combined with \cite{cai2023onearm} actually implies that, with probability at least $1-Cr^{6-d}$, there are at least $cr^{d-2}$ vertices which belong to a critical cluster with volume at least $cr^4$ in $B(r)$. In particular by \cite[Proposition~3]{werner2020clusters}, there are at least  $cr^{d-6}/\log(r)$ critical clusters with volume greater than $cr^4$ in $B(r)$ with probability going to one as $r\rightarrow\infty$, which corresponds to \cite[Proposition~4]{werner2020clusters} but for volume instead of radius.

\medskip

{\bf Acknowledgment:}
This research was supported through the programs \lq Oberwolfach Research Fellows\rq\ and \lq Oberwolfach Leibniz Fellows\rq\ by the Mathematisches Forschungsinstitut Oberwolfach in 2023. The authors would like to thank the Isaac Newton Institute for Mathematical Sciences, Cambridge, for support and hospitality during the program Stochastic systems for anomalous diffusion, where work on this paper was undertaken. This work was supported by EPSRC grant EP/Z000580/1. The research of AD has been supported by the Deutsche Forschungsgemeinschaft (DFG) grant DR 1096/2-1. AP has also been supported by the Swiss NSF, the Isaac Newton Trust grant G101121 \lq Interplay of random media and statistical mechanics\rq\ and the Engineering and Physical Sciences Research Council grant EP/R022615/1 \lq Random walks on dynamic graphs.\rq

\bibliography{bibliographie}
\bibliographystyle{abbrv}

\end{document}